\documentclass[12pt, a4paper,leqno,oneside]{amschanged}

\usepackage{hyperref}
\usepackage{amsmath}
\usepackage{amssymb}
\usepackage{amsthm}
\usepackage{atbeginend}
\usepackage{psfrag}
\usepackage[utf8]{inputenc}
\usepackage{graphicx}
\usepackage{hyperref}
\usepackage{tikz}
\usetikzlibrary{decorations.pathmorphing, patterns, calc, snakes}
\hypersetup{
    colorlinks,%
    citecolor=black,%
    filecolor=black,%
    linkcolor=black,%
    urlcolor=black
}

\usepackage{fullpage}
\pagestyle{plain} 

\newtheorem{theorem}{Theorem}[section]

\newtheorem{definition}[theorem]{Definition}
\newtheorem{proposition}[theorem]{Proposition}
\newtheorem{lemma}[theorem]{Lemma}

\numberwithin{equation}{section}

\theoremstyle{remark}
\newtheorem{remark}[theorem]{Remark}

\overfullrule 1mm

\usepackage{titlesec}
\titleformat{\section}{\Large\bfseries}{\thesection}{1em}{}
\titleformat{\subsection}{\bfseries}{\thesubsection}{1em}{}

\usepackage{courier}

\usepackage{array}
\newcolumntype{e}{>{\displaystyle}r @{\,} >{\displaystyle}c @{\,} >{\displaystyle}l}

\newcounter{constant}

\setcounter{constant}{-1}

\begin{document}

\fontsize{12}{14}\rm
\addtolength{\abovedisplayskip}{.5mm}
\addtolength{\belowdisplayskip}{.5mm}
\AfterBegin{enumerate}{\addtolength{\itemsep}{2mm}}

\title{\LARGE \usefont{T1}{tnr}{b}{n} \selectfont B\lowercase{ernoulli line percolation}} 

\author{\normalsize \itshape M.R. H\lowercase{il\'ario} $^{1,2}$ \color{white} \tiny and}
\address{$^1$ Universidade Federal de Minas Gerais, Departamento de Matem\'atica, 
\newline \hspace*{10mm} Belo Horizonte 31270-901, Brazil 
{\itshape \texttt{mhilario@mat.ufmg.br}}.}
\address{$^2$ Université de Genève, Department de Mathématiques,
\newline \hspace*{10mm} Rue du Lièvre 2-4, 1211 Genève, Switzerland.}

\author{\color{black} \normalsize \itshape V. S\lowercase{idoravicius} $^{3,4}$ \color{white} \tiny}

\address{$^3$ Courant Institue of Mathematical Sciences, NYU,
\newline \hspace*{10mm} 251 Mercer Street, New York, NY 10012, USA, and}
\medskip
\address{$\; \, $ NYU-Shanghai,
\newline \hspace*{10mm} 1555 Century Av., Pudong Shanghai, CN 200122, China.}

\address{$^4$ Instituto Nacional de Matem\'atica Pura e Aplicada, \newline \hspace*{10mm} 
Rio de Janeiro 22460-320, Brazil
{\itshape \texttt{v.sidoravicius@gmail.com}}.}
\date{\today}

\begin{abstract}
We introduce a percolation model on $\mathbb{Z}^d$, $d \geq 3$, in which the discrete lines of vertices that are parallel to the coordinate axis are entirely removed at random and independently of each other.
In this way a vertex belongs to the vacant set $\mathcal{V}$ if and only if none of the $d$  lines to which it belongs, is removed.
We show the existence of a phase transition for $\mathcal{V}$ as the probability of removing the lines is varied. 
We also establish that, in the certain region of parameters space where $\mathcal{V}$ contains an infinite component, the truncated connectivity function has power-law decay, while inside the region where $\mathcal{V}$ has no infinite component, there is a transition from exponential to power-law decay.
In the particular case $d=3$ the power-law decay extends through all the region where $\mathcal{V}$ has an infinite connected component.
We also show that the number of infinite connected components of $\mathcal{V}$ is either $0$, $1$ or $\infty$.
 
\end{abstract}

\maketitle

\section{Introduction}
\label{sec:intr}

In the classical Bernoulli site or bond percolation, the vertices (sites) or edges (bonds) of an infinite connected graph are  removed independently with a given probability $1-p$.
Geometric properties of the resulting random graph such as the number of infinite connected components and the decay of connectivity as a function of the parameter $p$ are among central questions of 
Percolation Theory.
In this article we address the same set of questions for a percolation model on $\mathbb{Z}^d$ presenting strong directional correlations.

To set the idea informally, consider a large cube made of a solid material and select at random  points on each of its faces.
For each selected point, drill holes through it, perpendicularly to the face it belongs to, and traversing the cube straight to the opposite face.
Whether the cube remains essentially a well-connected solid object or it is shattered into many tiny pieces 
depends on how many and in which way the points on each face are selected.

In this work we introduce a percolation model for which the state of each vertex is determined by random variables associated to its projections into $(d-1)$-dimensional subspaces.
This implies that the states of vertices aligned to each other in the lattice are strongly correlated and, in fact, the correlations do not vanish as a function of the distance between them.
The system undergoes a phase transition for existence of an infinite connected component as we vary some control parameters, however, when the truncated connectivity function 
or the radius of the cluster containing the origin is considered, 
some interesting phenomena arise. 
Namely,  there are multiple transitions in the connectivity decay different from what is obtained for ``classical" nearest neighbor or some short-range percolation models.
In particular, there is a transition from exponential to power-law decay inside the subcritical phase.

Let us now define the model and state the main results precisely. 
Consider $\mathbb{Z}^d$,  $d \geq 3,$  denote by $\{ e_1, \ldots, e_d\}$ a canonical 
orthonormal basis and by  $\mathcal{P}_i \subset \mathbb{Z}^d$ the subspace orthogonal to $e_i$, \emph{i.e.,} the set of points of $\mathbb{Z}^d$ having the $i$-th coordinate equal to $0$. 
For each $i = 1,\ldots,d$, and $w \in \mathcal{P}_i$ let 
$\ell_i(w):=\{w + z e_i,  ~ z \in \mathbb{Z} \}$  stand for the discrete line parallel to $e_i$ 
that contains the vertex $w$. 

Fix $d$ parameters $p_1, \ldots, p_d$ in the interval $[0,1]$.
For each $i$ and each $w \in \mathcal{P}_i$, we remove independently with probability 
$1-p_i$ the entire line $\ell_i(w)$,
\textit{i.e.} we remove at once all the vertices lying along $\ell_i(w)$.
We are then left with a random subset $\mathcal{V} \subset \mathbb{Z}^d$ consisting 
of the vertices that have not been removed. In analogy with the terminology used in random interlacements,
$\mathcal{V}$ will be called the \textit{vacant set}, and 
the study of its connectivity properties is the main goal of this article. 

We write $\mathbf{p} = (p_1,\ldots, p_d)$ and denote by $\mathbb{P}_\mathbf{p}$ the corresponding 
probability law of this process.
Let $\{\mathbf{0} \leftrightarrow \infty\}$ be the event that there exists an infinite path of 
nearest-neighbor vertices in $\mathcal{V}$ starting at the origin $\mathbf{0}\in\mathbb{Z}^d$ or, equivalently, that the origin belongs to an infinite connected component of $\mathcal{V}$.
For a given $\mathbf{p}$, we say that $\mathcal{V}$ percolates if $\mathbb{P}_{\mathbf{p}} (\mathbf{0} \leftrightarrow \infty) > 0$ which, by ergodicity, implies that 
$\mathcal{V}$ has at least one infinite connected component almost surely.
Let $B(n) := [ -n, n]^d \cap \mathbb{Z}^d$ be the box of side-length $2n$, centered at $\mathbf{0}$ and  
denote by $\{\mathbf{0} \leftrightarrow \partial{B}(n)\}$  the event that there exists a 
nearest neighbor path in $\mathcal{V}$ connecting the origin to the boundary of $B(n)$. 

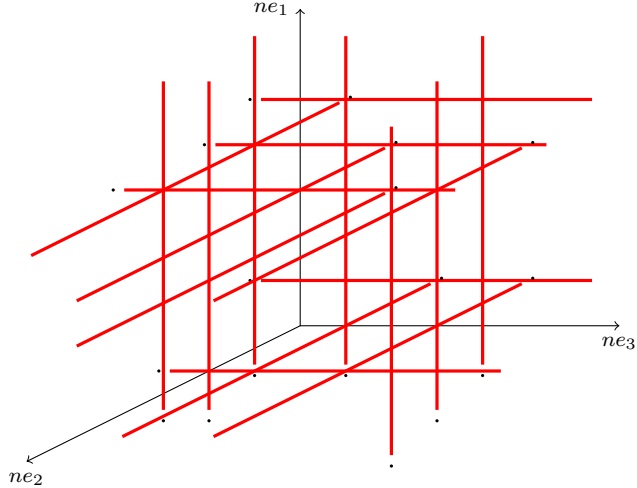
\begin{figure}
\begin{tikzpicture}[scale=.6]
\draw[<->] (0,7) -- (0,0) -- (7,0);
\draw[->] (0,0) -- (-6,-3);
\node[left] at (0,7) {\tiny$ne_1$};
\node[below] at (-6,-3) {\tiny$ne_2$};
\node[below] at (7,0) {\tiny$ne_3$};

\fill (-1,-1.1) circle [radius=1pt] node (z1) {}
      (1,-1.1) circle [radius=1pt] node (z2) {}
      (4,-1.1) circle [radius=1pt] node (z3) {}
      (-2,-2.1) circle [radius=1pt] node (z4) {}
      (-3,-2.1) circle [radius=1pt] node (z5) {}
      (3,-2.1) circle [radius=1pt] node (z6) {}
      (2,-3.1) circle [radius=1pt] node (z7) {};

\foreach \i in {1,...,7}
 {
  \draw[red, very thick] (z\i) -- ($(z\i)+(0,7.5)$);
 }

\fill (-1.1,1) circle [radius=1pt] node (y1) {}
     (-3.1,-1) circle [radius=1pt] node (y2) {}
     (-4.1,3) circle [radius=1pt] node (y3) {}
     (-1.1,5) circle [radius=1pt] node (y4) {}
     (-2.1,4) circle [radius=1pt] node (y5) {};

\foreach \i in {1,...,5}
 {
  \draw[very thick, red] (y\i) -- ($(y\i)+(7.5,0)$);
 }

\fill ($(3,1)+(.1,.05)$) circle [radius=1pt] node (x1) {}
     (1,5)+(.1,.05) circle [radius=1pt] node (x2) {}
     (2,3)+(.1,.05) circle [radius=1pt] node (x3) {}
     (5,1)+(.1,.05) circle [radius=1pt] node (x4) {}
     (2,4)+(.1,.05) circle [radius=1pt] node (x5) {}
     (5,4)+(.1,.05) circle [radius=1pt] node (x6) {};
\foreach \i in {1,...,6}
 {
  \draw[very thick, red] (x\i) -- ($(x\i)+(-7,-3.5)$);
 }
\end{tikzpicture}
\caption{A random set of lines when restricted to a quadrant of $\mathbb{Z}^3$. The vacant set $\mathcal{V}$ is the set of vertices of $\mathbb{Z}^3$ that are not covered by any of the lines.}
\end{figure}

When each given line is removed with high probability
(\emph{i.e.\ }when all the parameters $p_i$ are small) it is natural to expect that the procedure will disconnect $\mathbb{Z}^d$ into infinitely many finite components.
On the other hand, when the parameters are sufficiently large, $\mathcal{V}$ is expected to contain an infinite connected component, almost surely.
This is made precise in the following theorem where we also set the existence of exponential decay for the connectivity in some regimes.
There and in the remainder of the text $p_c(\mathbb{Z}^d)$ will always stand for the critical probability for Bernoulli site percolation on $\mathbb{Z}^d$.

\begin{theorem}
\label{theo:phas_tran}
Assume that $p_i < p_c(\mathbb{Z}^{d-1})$ for some $i~ \in \{1, \ldots, d\}$, and that $p_j \neq 1$ for  some
$j ~ \in ~ \{1,\ldots,d\}\backslash \{i\}$ then
\begin{equation} 
\label{eq:phas_tran_1}
\mathbb{P}_{\mathbf{p}} (\{ \mathbf{0} \leftrightarrow \infty \}) =0.
\end{equation}
On the other hand if $p_1, \dots, p_d$ are sufficiently close to 1, then 
\begin{equation}
\label{eq:phas_tran_2}
\mathbb{P}_\mathbf{p}(\{\mathbf{0} \leftrightarrow \infty \})>0.
\end{equation}
Furthermore, if   $p_i <  p_c(\mathbb{Z}^{d-1})$ and $p_j < p_c (\mathbb{Z}^{d-1})$  for some $i \neq j  \in \{1,\ldots, d\}$, then there exists a constant $\psi = \psi(\mathbf{p},d)>0$ such that
\begin{equation}
\label{eq:expo_deca}
\mathbb{P}_{\mathbf{p}}(\{\mathbf{0} \leftrightarrow \partial B(n) \}) \leq e^{-\psi(\mathbf{p},d)n},
\end{equation}
for $n \geq 0$.

\end{theorem}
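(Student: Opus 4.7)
Three separate arguments are needed, all sharing a common projection device. For each $i$, let
\[
W_i := \{w \in \mathcal{P}_i : \ell_i(w) \text{ is not removed}\};
\]
since distinct $w$'s index disjoint lines, $W_i$ is a Bernoulli site percolation on $\mathcal{P}_i \cong \mathbb{Z}^{d-1}$ with density $p_i$, and every nearest-neighbour path in $\mathcal{V}$ projects under $\pi_i$ to a nearest-neighbour path in $W_i$.

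\emph{Proof of \eqref{eq:phas_tran_1}.} Assume $p_i < p_c(\mathbb{Z}^{d-1})$, pick $j \neq i$ with $p_j < 1$, and let $C_\mathbf{0}$ be the $\mathcal{V}$-cluster of the origin. Then $\pi_i(C_\mathbf{0})$ is a connected subset of $W_i$. If it is infinite we obtain an infinite $W_i$-cluster through the origin, contradicting subcriticality. Otherwise $\pi_i(C_\mathbf{0})$ is finite and $C_\mathbf{0} \subset F + \mathbb{Z} e_i$ for some finite $F \subset \mathcal{P}_i$. For each such $F$ and each $z \in \mathbb{Z}$, the event that every line $\ell_j(w + z e_i)$, $w \in F$, is removed has probability at least $(1-p_j)^{|F|} > 0$, and these events are independent across $z$ because the corresponding lines have pairwise distinct $i$-coordinates. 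Borel--Cantelli then furnishes infinitely many ``barrier'' levels $z$ on both sides of the origin with $(F + z e_i) \cap \mathcal{V} = \emptyset$, forcing every $\mathcal{V}$-cluster contained in $F + \mathbb{Z} e_i$ to be finite. A union over the countable family of finite $F$'s delivers \eqref{eq:phas_tran_1}.

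\emph{Proof of \eqref{eq:expo_deca}.} Under the stronger hypothesis $p_i, p_j < p_c(\mathbb{Z}^{d-1})$ with $i \neq j$, any path $\gamma$ in $\mathcal{V}$ from $\mathbf{0}$ to $\partial B(n)$ ends at some $v$ with $|v_k| = n$ for a coordinate $k$. If $k \neq i$, the projection $\pi_i(v)$ has $\ell_\infty$-norm $n$ in $\mathcal{P}_i$ and $\pi_i(\gamma)$ connects $\mathbf{0}$ to $\pi_i(v)$ inside $W_i$; if $k = i$ one argues identically with $\pi_j$, which preserves the $i$-th coordinate since $j \neq i$. This yields
\[
\mathbb{P}_\mathbf{p}(\mathbf{0} \leftrightarrow \partial B(n)) \leq \mathbb{P}_{p_i}\bigl(\mathbf{0} \leftrightarrow \partial B(n) \text{ in } W_i\bigr) + \mathbb{P}_{p_j}\bigl(\mathbf{0} \leftrightarrow \partial B(n) \text{ in } W_j\bigr),
\]
and the Aizenman--Barsky/Menshikov exponential decay for subcritical Bernoulli site percolation on $\mathbb{Z}^{d-1}$ gives \eqref{eq:expo_deca} with $\psi = \min(\psi_i, \psi_j) > 0$.

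\emph{Proof of \eqref{eq:phas_tran_2}.} This is the step I expect to cost the most work: the infinite-range correlations induced by the lines defeat a naive Peierls count, since a cutset of vacant vertices lying along an axis-parallel line is cheap (one removed line suffices). My plan is a block renormalisation at scale $L \gg 1$. Call the cube $\Lambda_x := L x + [0,L)^d$ \emph{good} if, in every direction $i$, at most $\varepsilon L^{d-1}$ of the $L^{d-1}$ lines meeting it are removed. A deterministic argument valid for $\varepsilon$ small then produces inside a good cube a ``highway'' in $\mathcal{V}$ touching every face, and highways in two face-adjacent good cubes must intersect. Concentration of binomial sums makes each $\Lambda_x$ good with probability arbitrarily close to $1$ once the $p_i$'s are close enough to $1$. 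The good-events of $\Lambda_x$ and $\Lambda_{x'}$ are jointly independent whenever $x$ and $x'$ differ in at least two coordinates; otherwise they differ in exactly one direction $k$ and share only the $L^{d-1}$ lines of direction $k$. This partial decoupling should suffice to run a Peierls argument on the renormalised lattice and conclude that the good cubes percolate, hence so does $\mathcal{V}$.
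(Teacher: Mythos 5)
Your arguments for \eqref{eq:phas_tran_1} and \eqref{eq:expo_deca} are correct and are essentially the paper's own: the tube-and-barrier argument for \eqref{eq:phas_tran_1} is the content of Lemmas \ref{lemma:tube} and \ref{lemma:tube_2} (the paper confines $C(\mathbf{0})$ to $\pi_1^{-1}(B_1^{d-1}(n_0))$ and uses the events $\mathcal{C}_2(k)$ exactly as your barrier levels, with the same independence-across-levels and Borel--Cantelli step), and the projection plus union bound plus Menshikov/Aizenman--Barsky argument for \eqref{eq:expo_deca} is the same as in Section \ref{sec:phas_tran}.

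The gap is in \eqref{eq:phas_tran_2}, which you yourself leave as a plan, and the plan does not work as stated. The central deterministic claim --- that a cube in which each direction loses at most $\varepsilon L^{d-1}$ lines must contain a vacant ``highway'' touching every face --- is false. Take $d=3$ and remove only lines parallel to $e_3$: the set of lines $\ell_3(w)$ with $w=(x_1,x_2,0)$, $x_1+x_2=c$, forms a diagonal wall $\{x_1+x_2=c\}$ which no nearest-neighbour path can cross (each step changes $x_1+x_2$ by at most $1$), and it costs only $O(L)$ lines. Two such walls in the two diagonal directions, still only $O(L)\ll \varepsilon L^2$ lines and hence compatible with your goodness condition for large $L$, cut the cube into four pieces none of which meets all six faces. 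So goodness defined by counting removed lines cannot yield the crossing structure your renormalisation needs, and no amount of concentration or decoupling repairs a false deterministic lemma; the definition of a good block would have to be strengthened to require geometric crossing events in the projections (this is precisely what the paper does in Section \ref{sec:sub_exp} for the harder Theorems \ref{theo:conn_deca_2} and \ref{theo:pha_tra_2}). For \eqref{eq:phas_tran_2} itself the paper takes a much shorter route that you should compare with: it restricts $\mathcal{V}$ to the two-dimensional surface $\mathcal{P}$ of \eqref{eq:graph}, orthogonal to $(1,1,1,0,\ldots,0)$. A removed line meets $\mathcal{P}$ in at most a bounded number of vertices, so the induced field on $\mathcal{P}$ is $2$-dependent with marginal $p_1\cdots p_d$, and the Liggett--Schonmann--Stacey theorem gives stochastic domination of a supercritical Bernoulli site percolation on $\mathcal{P}$, whence percolation of $\mathcal{V}$. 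Note that this tilting is not optional decoration: restricting to a coordinate plane fails outright, since removing lines inside $\mathbb{Z}^2$ almost surely shatters it into finite rectangles, which is the same phenomenon that defeats your line-counting blocks.
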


\medskip

%

Let
$
\{\mathbf{0} \leftrightarrow \partial{B}(n),~ \mathbf{0} \nleftrightarrow \infty\} 
$ 
denote the event that the origin is connected to the boundary of $B(n)$ but does not belong to a infinite cluster. 
The next theorem is the main result of this paper.
It states that there are regimes for which the connectivity decays as a power law.

\begin{theorem}
\label{theo:conn_deca_2}
Assume that $p_1 \in (0,1)$, $p_2 > p_c(\mathbb{Z}^{2})$ and $p_3 > p_c(\mathbb{Z}^{2})$.
For $d=3$, there exist constants $\alpha(\mathbf{p})>0$ and $\alpha'(\mathbf{p})>0$ depending on $\mathbf{p}$, such that
\begin{equation}
\label{eq:poly_deca}
\mathbb{P}_{\mathbf{p}} (\{ \mathbf{0} \leftrightarrow \partial{B}(n), ~ \mathbf{0} 
\nleftrightarrow \infty\}) \geq \alpha'(\mathbf{p}) n^{-\alpha(\mathbf{p})}, \text{ for all $n \geq 0$}.
\end{equation}
If $d \geq 4$, then there exists $p_{\bullet} = p_{\bullet} (p_2, p_3) \in (0,1)$ such that if $p_4 \cdots p_d \geq p_{\bullet}$ then there exist constants $\alpha(\mathbf{p})>0$ and $\alpha'(\mathbf{p})>0$ such that \eqref{eq:poly_deca} holds.
\end{theorem}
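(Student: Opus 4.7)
The plan is to exhibit, for each $n$, a sub-event of $\{\mathbf{0}\leftrightarrow \partial B(n),\ \mathbf{0}\nleftrightarrow \infty\}$ of probability at least $c\, n^{-\alpha}$. The source of polynomial (rather than stretched-exponential) decay is that a single removed line already knocks out an entire infinite family of vertices; hence a full ``cage'' around $\mathbf{0}$ can be built by removing only $O(\log n)$ lines, each of constant probability, which costs only a polynomial factor in $n$.

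Concretely, for $k=k(n)=\lceil c\log n\rceil$, I would define the cage event $\mathcal{E}_n$ as the conjunction of the following: (i) every $e_1$-line $\ell_1((0,v_2,v_3))$ with $\max(|v_2|,|v_3|)=k$ is removed; (ii) for each $v_3$ with $|v_3|\le k$, both $e_2$-lines $\ell_2((n+1,0,v_3))$ and $\ell_2((-n-1,0,v_3))$ are removed. All these lines are distinct, so the corresponding removal events are independent and
$$
\mathbb{P}_{\mathbf{p}}(\mathcal{E}_n)\;\ge\;(1-p_1)^{C_1 k}(1-p_2)^{C_2 k}\;=\;n^{-\alpha(\mathbf{p})}
$$
for some $\alpha(\mathbf{p})>0$. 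On $\mathcal{E}_n$, the first family of lines removes a cylindrical wall around the $e_1$-axis in $\mathcal{P}_1$, while the second removes two caps at $v_1=\pm(n+1)$, so any $\mathcal{V}$-cluster starting at $\mathbf{0}$ is trapped inside the finite slab $T_n:=[-n,n]\times[-k+1,k-1]^2$ and is therefore automatically finite.

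Conditionally on $\mathcal{E}_n$, the lines governing $\mathcal{V}$ inside $T_n$ are disjoint from those appearing in $\mathcal{E}_n$, so that conditional law is exactly the original line percolation process restricted to the infinite slab of cross-section $[-k+1,k-1]^2$. The remaining point is to check that, inside this slab, $\mathbf{0}$ is connected in $\mathcal{V}$ to one of the faces $\{v_1=\pm n\}$ with a probability bounded away from zero, uniformly in $n$. Since $p_2,p_3>p_c(\mathbb{Z}^2)$, each of $\mathcal{P}_2\setminus A_2$ and $\mathcal{P}_3\setminus A_3$ is supercritical and, by a slab-percolation theorem of Grimmett--Marstrand type, remains supercritical on the corresponding infinite slabs once $k$ is taken large enough. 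Together with $\mathbb{P}_{\mathbf{p}}(\ell_1(\mathbf{0})\text{ preserved})=p_1>0$, this should produce a $\mathcal{V}$-crossing of $T_n$ in the $e_1$-direction with probability bounded below by a constant through a static coarse-graining, yielding the announced lower bound $\alpha'(\mathbf{p})\,n^{-\alpha(\mathbf{p})}$.

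The main obstacle is this last step, namely a quantitative crossing estimate for the line percolation process inside a slab of transverse width $k$. The natural approach is a Pisztora-type block renormalization on a mesoscopic scale intermediate between $1$ and $k$, using the supercritical clusters of $\mathcal{P}_2\setminus A_2$ and $\mathcal{P}_3\setminus A_3$ to define ``good mesoscopic blocks'' and then dominating the process of good blocks by a supercritical short-range Bernoulli percolation on $\mathbb{Z}$. For $d\ge 4$ the very same construction works, with the cage and caps still built using only $e_1$- and $e_2$-lines so that the polynomial cost is unchanged; the extra hypothesis $p_4\cdots p_d\ge p_{\bullet}(p_2,p_3)$ is precisely what guarantees that the constraints coming from the directions $e_4,\dots,e_d$ do not destroy the positive density of $\mathcal{V}$-vertices inside each mesoscopic block, so that the renormalization still closes.
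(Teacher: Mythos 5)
Your cage construction and the observation that confinement can be bought with only $O(\log n)$ removed lines (hence a polynomial price) is sound for $d=3$, and your independence argument between the cage lines and the lines governing the inside of the slab is correct; this part is a legitimate variant of what the paper does (the paper confines instead in projection, by closing the $\omega_1$-sites adjacent to a $\log$-width footprint in $\mathcal{P}_1$ and invoking Lemma \ref{lemma:tube_2}). The genuine gap is exactly the step you defer: a lower bound, uniform in $n$, for the probability that the \emph{line percolation} vacant set crosses a corridor of transverse width $O(\log n)$ over distance $n$. An appeal to a Grimmett--Marstrand slab theorem is not available here: that theorem concerns i.i.d.\ Bernoulli percolation in slabs of fixed width, whereas your conditional process has infinite-range columnar dependencies inside the slab (each removed $e_1$-line deletes an entire column running the full length of the corridor, and for each fixed $v_1$ an entire cross-sectional slice is deleted with probability $((1-p_2)(1-p_3))^{\Theta(\log n)}$), and, as the paper notes, $\mathcal{V}$ restricted to any two-dimensional plane a.s.\ does not percolate, so no planar or fixed-slab result can be quoted off the shelf. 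A Pisztora-type coarse graining for this dependent field is precisely the nontrivial content one has to supply; the paper itself does \emph{not} prove such an unconditional crossing estimate. Instead it pays a further polynomial price to force the environment along the corridor ($\mathcal{D}_1$, $\mathcal{C}_2$, $\mathcal{C}_3$, $\mathcal{D}''$), and then converts supercritical crossings of $\lceil c\log k\rceil\times k$ rectangles in $\mathcal{P}_2$ and $\mathcal{P}_3$ into an actual open path in $\mathbb{Z}^3$ via the ``compatible paths'' construction (Lemmas \ref{lemma:geo}--\ref{cor:geo_2_2}); note that crossings in the two projections alone do not yield open sites in $\mathbb{Z}^3$, since a site is open only if \emph{all} of its projections are open -- this is the issue your sketch does not confront.

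The $d\ge 4$ part of your proposal also fails as stated. A cage built only from $e_1$- and $e_2$-lines does not confine the cluster in $\mathbb{Z}^d$, $d\ge4$: the walls you remove block only the slice $v_4=\dots=v_d=0$, and the cluster escapes in the directions $e_4,\dots,e_d$. To enclose a tube of length $n$ and width $k$ in $\mathbb{Z}^d$ by removed lines one needs of order $k^{d-2}$ lines for the walls and again $k^{d-2}$ for each cap, so with $k\asymp\log n$ the cost is $\exp(-C(\log n)^{d-2})$, which is super-polynomial already for $d=4$; taking $k$ constant instead makes the crossing probability exponentially small because blocking slices occur with constant density. The paper avoids this by never building a $d$-dimensional cage: it restricts to the $3$-dimensional subspace, bundles the directions $e_4,\dots,e_d$ into an independent Bernoulli site field $\omega''$ with parameter $p''=p_4\cdots p_d$ (this is where the hypothesis $p''\ge p_\bullet$ enters, to make whole blocks $\omega''$-open for the stochastic-domination step), and ensures $\mathbf{0}\nleftrightarrow\infty$ by closing only the unit $\ell_\infty$-shell around the two-dimensional, $\log$-width footprint in $\mathcal{P}_1$ -- a set whose size stays of order $\log k$ in every dimension, so the cost remains polynomial. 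Your closing remark that $p_4\cdots p_d\ge p_\bullet$ merely needs to ``not destroy the positive density'' of open sites is too weak for any such renormalization to close; one needs entire mesoscopic blocks $\omega''$-open with probability near one.
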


Clearly the roles of the labels $i$ in the parameters $p_i$'s can be exchanged in the statement above.
In view of Theorem \ref{theo:phas_tran} the region of $[0,1]^{d}$ covered by the parameters in the statement of Theorem \ref{theo:conn_deca_2} contains parts of the supercritical and of the subcritical regime contrasting with the well known results for Bernoulli site percolation (see the comments in the end of this section).
Note that, in the special case $d=3$ it contains entirely the supercritical regime.
It is an interesting question weather this is the case for higher dimensions.

Some ideas appearing in the proof of Theorem  \ref{theo:conn_deca_2} can be used to prove the following theorem.
\begin{theorem}
 \label{theo:pha_tra_2}
Assume that $p_2> p_c (\mathbb{Z}^2)$ and $p_3 > p_c (\mathbb{Z}^2)$, then there exists $\epsilon = \epsilon(p_2, p_3)> 0$ such that if $p_i > 1-\epsilon$, for $i=1$ and $3 < i \leq d$  then $\mathbb{P}_{\mathbf{p}}(\{\mathbf{0} \leftrightarrow \infty\}) > 0$. 
\end{theorem}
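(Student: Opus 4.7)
The plan is to first reduce to $d=3$ by restricting to a 3-dimensional sub-lattice, and then attack the $d=3$ case by a self-similar block renormalization that allows Theorem~\ref{theo:phas_tran} to be invoked at the renormalized scale. For the reduction, let $S := \{v \in \mathbb{Z}^d : v_4 = \cdots = v_d = 0\} \cong \mathbb{Z}^3$. For $v \in S$ and $i \in \{4,\ldots,d\}$ the projection of $v$ onto $\mathcal{P}_i$ coincides with $v$ itself (since $v_i = 0$), so as $v$ ranges over $S$ the lines $\ell_i(v)$, $i\geq 4$, are pairwise distinct; the events that they are kept are jointly independent and also independent of the lines in directions $1,2,3$. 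Hence $\mathcal{V}\cap S$ has the same law as the three-dimensional line percolation with parameters $(p_1,p_2,p_3)$, further thinned by an independent iid Bernoulli field of parameter $q := \prod_{i=4}^{d} p_i > (1-\epsilon)^{d-3}$, which can be made arbitrarily close to $1$. A standard sprinkling/domination argument then shows it is enough to establish the $d=3$ case.

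For $d=3$ I set up a block renormalization. Fix a large scale $L$ and for $u=(u_1,u_2,u_3)\in\mathbb{Z}^3$ write $B(u):=Lu+[0,L)^3$. Declare $B(u)$ \emph{good} if three events hold: $G_1(u_1,u_3)$, that in the Bernoulli$(p_2)$ site percolation on the $(x_1,x_3)$-plane (indexing kept $e_2$-lines) restricted to a slightly enlarged window around $[u_1L,(u_1+1)L)\times[u_3L,(u_3+1)L)$ the unique supercritical cluster makes crossings of the central window in both coordinate directions; $G_2(u_1,u_2)$, the analogous event for $e_3$-lines in the $(x_1,x_2)$-plane; and $G_3(u_2,u_3)$, that at most an $\eta$-fraction of the $e_1$-lines projecting into $[u_2L,(u_2+1)L)\times[u_3L,(u_3+1)L)$ are removed, for some fixed $\eta$ slightly larger than $1-p_1$. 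By supercritical two-dimensional crossing estimates ($p_2,p_3>p_c(\mathbb{Z}^2)$) and the law of large numbers, each of $\mathbb{P}(G_1),\mathbb{P}(G_2),\mathbb{P}(G_3)$ tends to $1$ as $L\to\infty$.

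The crucial observation is that the three families $\{G_1(u_1,u_3)\}$, $\{G_2(u_1,u_2)\}$, $\{G_3(u_2,u_3)\}$ are indexed exactly as the three line families of a Bernoulli line percolation on the renormalized $\mathbb{Z}^3$, and they involve disjoint sets of Bernoulli variables, so they are mutually independent (modulo a finite-range dependence within each family created by the halos, which is absorbed by a Liggett--Schonmann--Stacey stochastic domination). The renormalized ``good block'' process thus dominates a Bernoulli line percolation on $\mathbb{Z}^3$ with parameters $\tilde p_i$ arbitrarily close to $1$, to which Theorem~\ref{theo:phas_tran} applies, producing an infinite cluster of good blocks with positive probability. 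A patching argument---implemented through the halos so that crossings of adjacent good blocks automatically link up---then upgrades this cluster to an infinite component of $\mathcal{V}$.

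The hard part will be the patching step: inside each good block one must exhibit a connected subset of $\mathcal{V}$ that spans the block in all three coordinate directions. Combining the lifted two-dimensional crossings in $G_1$ and $G_2$ produces, at each height $x_1$, a product grid of candidate sites where both $e_2$- and $e_3$-lines are kept; on all but the $\eta$-fraction of $(x_2,x_3)$-columns excluded by $G_3$, the remaining $e_1$-line is also kept, so those candidates lie in $\mathcal{V}$. Arguing along the same lines as in the proof of Theorem~\ref{theo:conn_deca_2}, the thick product structure is robust to this small $e_1$-thinning and retains a connected crossing of the block, which matches those of its neighbours through the common halo.
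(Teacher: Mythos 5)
Your renormalization-to-line-percolation idea is appealing and genuinely different from the paper's route, but the proposal has a gap at exactly the step you flag as ``the hard part'', and it is not a technicality --- it is the crux of the theorem. Your good-block event treats the $e_1$-lines only through the density condition $G_3$ (``at most an $\eta$-fraction of the $e_1$-lines through the block are removed''). This is not enough to produce a connected crossing of $\mathcal{V}$ inside a good block. The set of sites whose $\pi_2$- and $\pi_3$-projections lie in the two planar crossing clusters is not a ``product grid'' that is automatically connected: to extract an actual path in $\mathbb{Z}^3$ from two planar crossings one needs the compatible-paths construction (the paper's Lemma \ref{lemma:geo} and Lemma \ref{lemma:geo_2}), and the resulting path has its $(x_2,x_3)$-projection essentially dictated by the chosen crossings. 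A sparse set of removed $e_1$-columns --- even a single bad column $(x_2^*,x_3^*)$ with $x_2^*$ on the $\omega_3$-crossing and $x_3^*$ on the $\omega_2$-crossing --- can sever that construction, and you give no rerouting argument showing that some compatible pair of crossings avoids all bad columns, simultaneously for every block of an infinite cluster. Your appeal to ``the same lines as in the proof of Theorem \ref{theo:conn_deca_2}'' does not help: in that proof the paper does \emph{not} prove robustness to an $\omega_1$-thinning; it simply forces the entire relevant region of $\mathcal{P}_1$ to be $\omega_1$-open (the event $\mathcal{D}_1$), which is precisely why that argument only yields a polynomial lower bound and not percolation. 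The paper's proof of Theorem \ref{theo:pha_tra_2} resolves this differently: it pays the full $\omega_1$-price along an infinite $2$-directed path of completely open $n\times n$ squares in $\mathcal{P}_1$ (which exists with positive probability because $p_1^{n^2}$ can be pushed above the $2$-directed critical point), and then looks for good blocks only in the lift of that path, the $2$-directedness guaranteeing the bounded overlap of $\mathcal{P}_2$- and $\mathcal{P}_3$-projections needed for the Liggett--Schonmann--Stacey domination (Lemma \ref{lemma:pha_tra_2_1}).

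Two secondary points. First, the reduction ``a standard sprinkling/domination argument shows it is enough to establish the $d=3$ case'' is not available as stated: the independent Bernoulli$(q)$ site-thinning by $\prod_{i\ge4}p_i$ cannot be absorbed into the three line-parameters by stochastic comparison (the paper notes explicitly that these measures are not comparable to product measures); the thinning has to be carried into the block events themselves, as the paper does by requiring all sites of a block to be $\omega''$-open --- this is fixable inside your scheme, but it must be said. Second, your self-similar observation --- that the three families $G_1(u_1,u_3)$, $G_2(u_1,u_2)$, $G_3(u_2,u_3)$ reproduce the line-percolation structure at the block scale, so Theorem \ref{theo:phas_tran} can be invoked after LSS --- is correct as far as the existence of an infinite cluster of good blocks goes, and would be a nice alternative to the paper's $2$-directed-path argument \emph{if} the good-block event were strengthened so that good blocks provably contain linkable crossings of $\mathcal{V}$; with the current $G_3$ it does not, and repairing this (e.g.\ by demanding all $e_1$-lines through the block be kept) would destroy the ``parameters close to $1$'' claim for that family unless one reintroduces something like the paper's restriction to a sparse planar structure where the full $\omega_1$-cost is affordable.
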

This result is not only equally good to prove the second statement in Theorem \ref{theo:phas_tran} but also adds more information about the phase diagram of the model.

We now turn our attention to the random variable $N$ defined as being the number of infinite connected components in $\mathcal{V}$.
By standard ergodicity arguments $N$ is constant almost surely.
More than that we show that it can only assume the values $0$, $1$ or $\infty$ almost surely:
\begin{theorem}
 \label{theo:num_clu}
Almost surely under $\mathbb{P}_{\mathbf{p}}$, $N$ is a constant random variable taking values in the set $\{0, 1, \infty\}$.
\end{theorem}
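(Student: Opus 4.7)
The plan is to follow the Burton--Keane/Newman--Schulman strategy, performing the ``insertion tolerance'' step at the level of \emph{lines} rather than sites: the random variables of the model are the $\{0,1\}$-valued line retentions, which form a product Bernoulli measure, whereas the sites carry no site-level finite energy (a site's state is a deterministic function of the $d$ lines through it).

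First, $\mathbb{P}_\mathbf{p}$ is translation-invariant and ergodic under the $\mathbb{Z}^d$-action on line configurations, since it factors as a product, in the $d$ axis directions, of Bernoulli product measures on lines, each of which is ergodic under the induced $\mathbb{Z}^{d-1}$-shift on $\mathcal{P}_i$. Since $N$ is a translation-invariant function of the line configuration, $N$ is almost surely constant in $\{0, 1, 2, \ldots, \infty\}$.

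To rule out $N = k$ with $2 \le k < \infty$, argue by contradiction. For $n$ large, $\mathbb{P}_\mathbf{p}(E_n) \ge 1/2$, where $E_n = \{\text{each of the } k \text{ infinite clusters of } \mathcal{V} \text{ meets } B(n)\}$, because each infinite connected subset of $\mathbb{Z}^d$ is unbounded and $k < \infty$. Let $\mathcal{L}_n$ be the finite family of lines meeting $B(n)$ and let $A_n = \{\omega_\ell = 1 \text{ for all }\ell \in \mathcal{L}_n\}$; we may assume $p_i > 0$ for every $i$ (otherwise $\mathcal{V} = \emptyset$ and $N = 0$ trivially), so $\mathbb{P}_\mathbf{p}(A_n) > 0$. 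The monotone coupling $\omega \sim \mathbb{P}_\mathbf{p} \mapsto \omega' := \tau_{\mathcal{L}_n}(\omega) \sim \mathbb{P}_\mathbf{p}(\cdot \mid A_n)$ obtained by forcing retention on $\mathcal{L}_n$ yields $\mathcal{V}(\omega) \subseteq \mathcal{V}(\omega')$ and $B(n) \subseteq \mathcal{V}(\omega')$. On $E_n$, all $k$ infinite clusters of $\mathcal{V}(\omega)$ meet $B(n) \subseteq \mathcal{V}(\omega')$ and merge, through $B(n)$, into a single infinite cluster $C_\star$ of $\mathcal{V}(\omega')$. Since $\mathbb{P}_\mathbf{p}(\cdot \mid A_n)(N = k) = 1$, on $E_n$ there must exist $k - 1$ further infinite clusters of $\mathcal{V}(\omega')$ disjoint from $B(n)$, built from the added vertices $\mathcal{V}(\omega') \setminus \mathcal{V}(\omega)$ on the axial cross $T = \bigcup_{i=1}^d T_i$, $T_i = \{v \in \mathbb{Z}^d : |v_j| \le n \text{ for all } j \ne i\}$, together with finite components of $\mathcal{V}(\omega)$. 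Since $T_i \cap T_j = B(n)$ for $i \ne j$, any such extra cluster is confined to a single tube $T_i$ outside $B(n)$.

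The main obstacle is the geometric step of ruling out these $k - 1$ extra clusters with positive probability, contradicting the almost-sure identity $N = k$ under $\mathbb{P}_\mathbf{p}(\cdot\mid A_n)$. The key input is that, inside $T_i$ beyond $B(n)$, the cross sections at different heights $v_i \in \mathbb{Z}$ involve disjoint families of the $d - 1$ perpendicular lines and are therefore i.i.d.\ copies of a $(d-1)$-dimensional line-percolation configuration in a fixed $(2n+1)^{d-1}$-box; each such cross section is entirely empty with a fixed positive probability (for example, when all direction-$j$ lines meeting the cross section are removed for a fixed $j \ne i$), so by Borel--Cantelli infinitely many empty cross sections disconnect the tube's added-vertex set into bounded slabs. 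Any infinite chain of such slabs through off-cross vertices must either use an infinite $\mathcal{V}(\omega)$-component -- which on $E_n$ lies in $C_\star$, forcing the chain to reach $B(n)$ and contradicting disjointness -- or an infinite concatenation of finite $\mathcal{V}(\omega)$-components bridged through tube slabs, which can be ruled out with positive probability by a quantitative estimate on the finite-cluster structure near $T_i$. Completing this geometric step gives $\mathbb{P}_\mathbf{p}(\cdot \mid A_n)(N \le 1) > 0$, contradicting $\mathbb{P}_\mathbf{p}(\cdot \mid A_n)(N = k) = 1$ and finishing the proof.
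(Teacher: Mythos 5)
Your overall strategy is the direct Newman--Schulman insertion (forcing open the finitely many lines that meet $B(n)$, which is indeed a legitimate ``finite energy'' move at the level of the line variables), and you correctly identify that this may create new infinite clusters supported on the axial cross $T=\bigcup_i T_i$. But the step where you dispose of these new clusters is exactly where the proof breaks down, and it is the step you leave as an assertion. Your Borel--Cantelli argument (infinitely many cross-sections of $T_i$ with no vacant vertices) only blocks connections \emph{inside} the tube; a new infinite cluster need never cross those sections within the tube, since it can consist of added tube vertices at heights $z_1<z_2<\cdots$ stitched together by finite $\mathcal{V}(\omega)$-clusters that leave the tube and cross the hyperplanes $\{v_i=z\}$ far away from it. (For the same reason your earlier claim that ``any such extra cluster is confined to a single tube $T_i$ outside $B(n)$'' is false as stated.) Ruling out such infinite chains of finite clusters bridged through the tube is precisely the hard part; you only say it ``can be ruled out with positive probability by a quantitative estimate on the finite-cluster structure near $T_i$'', but no such estimate is available a priori, and it would moreover have to be produced jointly with the event $E_n$, whose occurrence is determined by the same $\omega$ and is correlated with the finite-cluster geometry near the tubes. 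The paper itself points out that the insertion argument alone ``does not lead to any contradiction'' for exactly this reason.

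The paper's proof circumvents this with a different idea that your proposal is missing: it does not try to prevent new infinite clusters at all, but tracks \emph{densities}. By Burton--Keane every infinite cluster has a well-defined density; the ranked density vector $\rho=(\rho_1,\ldots,\rho_N)$ is a.s.\ constant by ergodicity (proved via the diagonal translation $T_e$, $e=(1,\ldots,1)$, which is mixing -- note that single coordinate translations are not ergodic here, so your ``product of ergodic factors'' justification also needs repair, though this is minor), and Proposition \ref{prop:den_fin_clu} shows all entries are strictly positive when $0<N<\infty$. Opening the lines through $B(n)$ is monotone, so on the (positive-probability, increasing) event that $B(n)$ meets all $N$ clusters, the merged cluster has density at least $\rho_1+\cdots+\rho_N>\rho_1$, contradicting the a.s.\ constancy of $\rho_1$ regardless of whatever new infinite components may have been created. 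To complete your argument you would either have to supply the missing geometric estimate (which appears to be genuinely difficult, arguably harder than the theorem itself) or switch to a quantity, such as the maximal cluster density, that is insensitive to the creation of new components.
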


We finish this section by presenting a general discussion about some differences between this model and the Bernoulli site percolation, pointing out some ideas involved in the proofs of the theorems above, and finally commenting on some related percolation models.

\medskip

\noindent \textbf{Infinite-range dependencies.}

\smallskip

\noindent The model features infinite-range dependencies.
In fact, for some configurations, it is possible to conclude that an entire line passing through the origin has been removed only by examining the states of few sites lying next to the origin.
For example, in $d=3$ on the event that the origin does not belong to $\mathcal{V}$ but the sites $(1,0,0)$ and $(0,1,0)$ do, one concludes that almost surely all sites lying in the line passing through the origin and the site $(0,0,1)$ are removed.
The presence of such strong dependencies limits the application of some techniques commonly used for Bernoulli site percolation, for instance, the Peierls' arguments.

One particular difficulty is that $\mathbb{P}_{\mathbf{p}}$ is not dominated stochastically by any Bernoulli site percolation process.
Indeed, fixing a parameter $p$ and $d$ other parameters $p_1,\ldots,p_d$, all of them strictly positive, we have that for every $n$ sufficiently large,
\begin{equation*}
 \mathbb{P}_{\mathbf{p}} (\{ B(n) \subset \mathcal{V}\}) \geq \min\{p_1,\ldots,p_2\}^{d(2n+1)^{d-1}} > p^{(2n+1)^d} = \mathbb{P}_p ( \{B(n) \subset \mathcal{V} \}).
\end{equation*}
Here the first inequality follows from the fact that the number of lines touching the set $B(n)$ is, at most, equal to $d(2n+1)^{d-1}$.
Also $\mathbb{P}_{\mathbf{p}}$  does not dominates any Bernoulli site percolation since, for every $n$ large enough:
\begin{equation*}
 \mathbb{P}_{\mathbf{p}} (\{B(n) \cap \mathcal{V} \neq \emptyset\}) \leq 1 - (1-p_1)^{(2n+1)^{d-1}} < 1 - (1-p)^{(2n+1)^d} = \mathbb{P}_p (\{B(n)\cap \mathcal{V} \neq \emptyset \}).
\end{equation*}

\medskip

\noindent \textbf{Phase transition.} 

\smallskip

\noindent Theorem \ref{theo:phas_tran} settles the existence of a phase transition for the connectivity of $\mathcal{V}$ and its proof is presented in Section \ref{sec:phas_tran}.

The existence of the subcritical phase is given by \eqref{eq:phas_tran_1}.
In order to prove it we argue that, whenever $p_1 < p_c(\mathbb{Z}^{d-1})$, the connected component of $\mathcal{V}$ containing the origin is necessarily contained in a region of $\mathbb{Z}^d$ that projects into a finite subset of $\mathcal{P}_1$.
The proof is then finished by showing that, whenever $p_2 \neq 1$, any such region is always disconnected into infinitely many finites subsets as one removes lines parallel to $e_2$.

Usually the existence of a supercritical phase in a percolation model in $d\geq 3$ is proved by showing that the restriction of $\mathcal{V}$ to $\mathbb{Z}^2$ percolates provided the parameter is high enough.
However, in our case, almost surely, $\mathcal{V}\cap\mathbb{Z}^2$ does not have infinite components regardless of the choice of the parameters.
In fact, removing independently lines in $\mathbb{Z}^2$ disconnects it into infinitely many disjoint rectangles almost surely.
This difficulty can be overcome by restricting $\mathcal{V}$ to the `two-dimensional subspace' perpendicular to the vector $(1,\ldots, 1)$, instead of $\mathbb{Z}^2$.
The resulting process is a $2$-dependent percolation, that dominates a supercritical Bernoulli site percolation provided that all of the parameters $p_1, \ldots, p_d$ are sufficiently high.

\begin{remark}The fact that we only remove lines that are parallel to the coordinate axis play a very important role in this argument.
This is not the case for the cylinders percolation model \cite{Tykesson10, Hilario12} which can be regarded as a continuum and isotropic version of the model studied here (see discussion at the end of this section).
For cylinders percolation, a evolved multiscale argument needs to be used in order to show the existence of the supercritical phase when $d=3$ \cite{Hilario12}.
\end{remark}

A parameter $p_i$ is said to be subcrtical (resp.\ supercritical) if $p_i < p_c(\mathbb{Z}^d)$ (resp.\ $p_i > p_c(\mathbb{Z}^d)$). By equations \eqref{eq:phas_tran_1} it suffices to have a single one of the $d$ parameters subcritical for the model to be subcritical.
Equation \eqref{eq:phas_tran_2} shows that when all the parameters are highly supercritical, then the model is also supercritical.
It is natural to search for a complete picture of the behavior of the model when the parameters vary in $[0,1]^d$, however we do not know what happens, for instance, when the $d$ parameters are slightly supercritical.
For $d=3$, we conjecture that the model is still subcritical when all the three parameters approach the value $p_c(\mathbb{Z}^2)$ from above.
In the opposite direction, Theorem \ref{theo:pha_tra_2} shows that $\mathcal{V}$ percolates when two of the parameters are slightly supercritical, provided that all the other $d-2$ are taken sufficiently close to $1$.
Its proof is presented in the end of Section \ref{sec:sub_exp}.

\begin{remark}
For $d=3$, one can show that $\mathcal{V}$ percolates as soon as all the three parameters are strictly bigger than $(\tilde{p}_c(\mathbb{Z}^2))^{1/3}$, where $\tilde{p}_c(\mathbb{Z}^2)$ denotes  the critical percolation probability for the independent oriented site percolation model on $\mathbb{Z}^3$.
We do not present a proof of this result here see, however, the comments in Remark \ref{r:p_*}.
\end{remark}

\medskip
\noindent \textbf{Connectivity decay.}

\smallskip
 
\noindent For Bernoulli site percolation, whenever $p<p_c(\mathbb{Z}^d)$ the quantity $\mathbb{P}_p(\{\mathbf{0} \leftrightarrow \partial{B}(n)\})$  decays  exponentially fast as $n$ increases (see \cite{Menshikov86},  \cite{Aizenman87} and \cite{Duminil-Copin15}).
As a consequence, the expected volume of the connected component containing the origin is finite.
This result is sometimes referred to as `coincidence of the critical points' or `uniqueness of the phase transition'.
For our model, the result concerning equation \eqref{eq:expo_deca}  follows from the exponential decay for Bernoulli site percolation as we will show in the ending of Section \ref{sec:phas_tran}.

For Bernoulli site percolation with $p>p_c(\mathbb{Z}^d)$, the ideas in \cite{Chayes87} can be used in order to show that $\mathbb{P}_p ( \{ \mathbf{0} \leftrightarrow \partial{B}(n),~ \mathbf{0} \nleftrightarrow \infty \})$ also decays exponentially fast as $n$ increases (see \cite{Grimmett99}).
Theorem \ref{theo:conn_deca_2} shows that this property no longer holds for our model since the decay is at most polynomial for a given choice of parameters that includes the highly supercritical regime and parts of the subcritical regime.
Section \ref{sec:sub_exp} is dedicated to its proof that relies on a one-step renormalisation argument and exploits the duality of the process when restricted to the coordinate planes.

\medskip

\noindent \textbf{The $3$-dimensional case.}

\smallskip

\noindent As stated above, for $d=3$ the decay is polynomial in all the supercritical regime.
Perhaps even more interesting is the fact that there is a transition in the connectivity decay inside the subcritical phase as can be inferred by equations \eqref{eq:expo_deca} and \eqref{eq:poly_deca}.
For instance, when $p_1$ and $p_2$ are strictly bigger than $p_c(\mathbb{Z}^{2})$ and $p_3$ is smaller than $p_c(\mathbb{Z}^{2})$ then, in view of equations \eqref{eq:phas_tran_1} the quantity $\mathbb{P}_{\mathbf{p}}(\{\mathbf{0} \leftrightarrow \partial{B}(n)\})$ does not decay exponentially fast, although the model is still subcritical.

One can consider a single parameter model by choosing $p_1 = p_2 = p_3 = \rho$ and then define its critical point by 
\begin{equation}
\label{e:p_*}
p_* := \inf \{\rho \in [0,1]; \mathbb{P}_{(\rho, \rho, \rho)} (\{\mathbf{0} \leftrightarrow \infty\}) > 0 \}.
\end{equation}
As long as $\rho < p_c(\mathbb{Z}^2)$ the connectivity decay is exponential, whereas it is at most polinomial when $\rho > p_c(\mathbb{Z}^2)$.
As stated above, we conjecture that $p_* > p_c(\mathbb{Z}^2)$, so that the vacant set does not percolate when $\rho = p_c(\mathbb{Z}^2)+\delta$ for $\delta$ taken sufficiently small.
If this turns out to be true, then the model presents a transition from exponential decay to a power law decay within the subcritical regime $\rho \in [0, p_*)$.

\medskip
\begin{center}
\begin{tikzpicture}[scale=.8]
\draw (0, 0) -- (10, 0);
\draw (0, .2) -- (0, -.2);
\draw (5, .2) -- (5, -.2);
\draw (7.0, .2) -- (7.0, -.2);
\draw (10, .2) -- (10, -.2);
\draw[dotted] (7, -.7) -- (7, -1.3);
\draw[dotted] (5, .2) -- (5, .8);

\node[below] at (0, -.2) {$0$};
\node[above] at (2.5, 0.2) {exponential decay};
\node[below] at (3.5,-.6) {no infinite components};
\node[below] at (5, -.1) {$p_c(\mathbb{Z}^2)$};
\node[align=center, above] at (7.5, .2) {polynomial decay};
\node[below] at (7.0, -.2) {$p_*$};
\node[align=center, below] at (8.5, -.2) {infinite \\ components};
\node[below] at (10, -.2) {$1$};

\end{tikzpicture}
\end{center}

\medskip

\noindent \textbf{Uniqueness of the infinite cluster.}

\smallskip

\noindent For proving Theorem \ref{theo:num_clu} we use a procedure similar to that of Newman and Schulman in \cite{Newman81_1} and \cite{Newman81_2}. 
However their methods do not apply directly to the measure $\mathbb{P}_{\mathbf{p}}$ due to fact that it fails to satisfy the so-called \emph{finite energy condition}.
Thus a non-trivial extension is needed. 
For that, we use the fact that for a translation-invariant measure on $\{0,1\}^{\mathbb{Z}^d}$ all infinite connected components have a well defined density (see \cite{Burton89}) and that the percolation processes $\omega_i$ satisfy the finite energy condition. 
The question whether $N \in \{0,1\}$ is still open. 
We conjecture that it should be the case at least when the components of $\mathbf{p}$ are high.

\medskip
\noindent \textbf{Related models and motivation.}

\smallskip

\noindent The model we present is an example of so called coordinate percolation since the state of each vertex is determined by random variables 
associated  with the projections of each of the vertex into lower-dimensional subspaces of the lattice what creates the infinite-range dependencies.

There are some other examples of coordinate percolation models that have been shown to present polynomial connectivity decay.
One of these models is the so-called \textit{corner percolation} studied by G.\ Pete in \cite{Pete08}.
It consists of a bond percolation model in $\mathbb{Z}^2$ in which every edge is removed with probability $1/2$, conditional that every site has exactly two perpendicular incident edges.
In \cite{Pete08} it is shown that every connected component of the vacant set is a closed circuit, so there are no infinite connected components.
It is shown that the probability that the diameter of the circuit containing the origin is greater than $n$ decays polynomially fast in $n$.

Another coordinate percolation model defined in $\mathbb{Z}^2$ is the so called \emph{Winkler's percolation}. This is a percolation representation due to N.~Alon for a model of scheduling of non-colliding random walks in the complete graph introduced by P.\ Winkler \cite{Winkler00} with the goal to study a problem in distributed computation.
In this percolation representation, for a fixed $k \geq 4$, one assigns independently to each horizontal and vertical line of the lattice an element chosen uniformly at random in $\{1,\ldots, k\}$.
A given site is said to be present (or unblocked) if the values assigned to the two lines passing through it differ.
Otherwise it is said to be removed (or blocked).
The main question is whether the probability that there exists an infinite oriented path in the vacant set starting at the origin is positive.
Allowing non-oriented paths, it has been shown in \cite{Winkler00} and \cite{Balister00} that for $k \geq 4$ the answer to this question is affirmative.
In \cite{Gacs11} and more recently, in \cite{Basu14} it was shown that this is also true for the original oriented setting provided that $k$ is very large.
The results of \cite{Gacs00} show that equation \eqref{eq:poly_deca} holds for this model in the supercritical regime ($k \gg 1$).

Despite from the similarity with Winkler's percolation, the percolation process considered here was inspired by a rather different dependent site percolation model, the \emph{random interlacements} introduced in \cite{Sznitman07} by Sznitman.
There the object of interest is the complementary set of sites of a Poisson point process in the space of double-infinite trajectories on $\mathbb{Z}^d$ modulo time shifts.
In \cite{Sznitman07} and in \cite{Sidoravicius09} it is shown that this model undergoes a phase transition on a non-degenerate multiplicative parameter for the intensity measure $u_*$.
In \cite{Sidoravicius10} it is proved that the connectivity function has a stretched exponential decay for intensities larger then a second parameter value $u_{**} \geq u_{*}$, which is related to the disconnection time of cylinders by random walks trajectories.
It is not known whether $u_{**}$ and $u_*$ coincide.

There are some other models where infinite straight lines are drilled from the ambient space.
Tykeson and Windisch \cite{Tykesson10} have studied the phase transition for a continuum analogous percolation model called \emph{cylinders' percolation} proposed by I.~Benjamini.
The vacant set for this model consist of the set of points of $\mathbb{R}^d$ that is not covered by the union of a collection of bi-infinite cylinders of radius one and with axis given by the lines in a realization of a Poisson process on the space of lines in $\mathbb{R}^d$.
They prove that for  $d \geq 3$ when the intensity of underlying Poisson process is high the vacant set does not percolate.
For dimension $d \geq 4$ they also showed that the vacant set percolates when the intensity is sufficiently small.
More than that, they showed that for any $d \geq 4$, if the intensities are low enough then, the restriction of $\mathcal{V}$ to any fixed subspaces of dimension $2$ has an infinite connected component almost surely.
For $d =3$ this is not the case: They showed that the probability of having percolation on any subspace of dimension $2$ is zero.
In \cite{Hilario12} it is shown that for $d=3$ the vacant set percolates in a thick enough slab for sufficiently small intensities, completing the proof of the existence of a non-trivial phase transition for $d=3$.

\medskip

\noindent \textbf{Simulations and the critical behavior.}

\smallskip

\noindent After the final draft of this paper  was ready, we discovered that the exactly same model was already introduced in the physics literature by  Y. Kantor \cite{Kantor86} for $d=3$.
Using Monte Carlo simulations the author obtained the value of $p_*=0.6345 \pm 0.0003$ and also an estimate on the statical critical exponent for the correlation length ($\nu$).
Remarkably, he also obtained estimates for the  the fractal co-dimension of the infinite cluster which are very close to the known value for the $3$-dimensional Bernoulli percolation what led him to  point out that the two models could belong to the same universality class.
The dynamical critical exponent for the shortest path spanning a big box of the lattice from bottom-to-top was also estimated and shown to coincide within error bars with the one for $3$-dimensional Bernoulli site percolation.
In \cite{Schrenk15}, more Monte Carlo simulations were performed.
The critical point  $p_*$ and the exponents obtained are within error bars consistent to the ones obtained in \cite{Kantor86}.
Also several other statical and dynamical critical exponents were obtained.
Although some of them, including $\nu$ are shown to differ from the the $3$-dimensional Bernoulli site percolation, the data suggest that they satisfy the usual scaling relations.

\medskip

\noindent \textbf{Open Problems.}

\smallskip

\noindent We finish this section presenting a list of some interesting questions left open.
\medskip

\noindent \textbf{Problem 1:} Show that for $d=3$, $p_* > p_c(\mathbb{Z}^2)$, where $p_*$ is defined in \eqref{e:p_*}.
Does the same hold for $d\geq 4$?
\medskip

\noindent \textbf{Problem 2:} For $d \geq 4$, does power-law decay of connectivity hold for all the supercritical phase?
\medskip

\noindent \textbf{Problem 3:} Is the infinite connected component unique when it exists?

\section{Mathematical setting}
\label{sec:math_sett}

In this section we present a construction of the Bernoulli line percolation model described in the previous 
section and fix some of the notation to be used throughout the text.

As in the previous section, let $\{e_1, \ldots, e_d\}$ represent the canonical orthonormal basis of
 $\mathbb{Z}^d$ and $\mathcal{P}_i$ be the vector subspace of $\mathbb{Z}^d$ orthogonal to $e_i$.
Note that there exists a natural isomorphism between $\mathcal{P}_i$ and $\mathbb{Z}^{d-1}$.
We denote by $\pi_i : \mathbb{Z}^d \to \mathcal{P}_i$ the orthogonal projection from $\mathbb{Z}^d$ to $\mathcal{P}_i$.

Consider for each $i \in \{1, \dots, d \}, $ the space $\Omega_i = \{0,1\}^{\mathcal{P}_i}$ endowed with $\mathcal{F}_i$, the canonical $\sigma$-algebra generated by the cylinder sets.
Elements of $\Omega_i$ will be called configurations in $\mathcal{P}_i$ and will be denoted by $\omega_i$.
For a fixed parameter $p_i$, let $\mathbb{P}_{p_i}$ be the canonical product probability measure on 
$(\Omega_i, \mathcal{F}_i)$ under which $\{\omega_i(v);~v \in \mathcal{P}_i\}$ is a family of independent 
Bernoulli random variables with mean $p_i$.

Now take $\Omega = \{0,1\}^{\mathbb{Z}^d}$ endowed with the product $\sigma$-algebra $\mathcal{F}$ 
and define for each site $v \in \mathbb{Z}^d$,
\begin{equation}
\label{eq:omeg}
\omega (v) = \prod_{i=1}^d \omega_i (\pi_i (v)).
\end{equation}
Thus, $\omega = \{ \omega(v);~ v \in \mathbb{Z}^d\}$ is a $\Omega$-valued random 
element defined on  the product space $(\Omega_1 \times \cdots \times \Omega_d, \mathcal{F}_1\times \cdots \times \mathcal{F}_d)$.
Elements in $\Omega$ will be called configurations in $\mathbb{Z}^d$.
 Let $\mathbf{p} = (p_1,\ldots, p_d) \in [0,1]^d$. Denote by $\mathbb{P}_\mathbf{p}$ the distribution of 
 $\omega$ in $\Omega$, \textit{i{.}e{.}}, for each $\mathcal{A} \in \mathcal{F}$
\begin{equation}
\label{eq:coor_perc}
\mathbb{P}_{\mathbf{p}} (\mathcal{A}) = \mathbb{P}_{p_1} \times \cdots \times \mathbb{P}_{p_d} (\{ \omega \in \mathcal{A}\}).
\end{equation}
\begin{definition}
The law $\mathbb{P}_{\mathbf{p}}$ will be called {Bernoulli line percolation} on $\mathbb{Z}^d$ with 
parameter vector $\mathbf{p} = (p_1, \ldots, p_d)$.
\end{definition}

A component $p_i$ of the vector $\mathbf{p}= (p_1, \ldots, p_d)$ is said to be subcritical 
(supercritical) if it is strictly smaller (strictly larger) than 
$p_c(\mathbb{Z}^{d-1})$.
 
For a fixed $i \in \{1,\ldots, d\}$ and a given $\omega_i \in \Omega_i$, each $v_i \in \mathcal{P}_i$ is 
said to be $\omega_i$-open if $\omega_i(v_i) = 1$ and $\omega_i$-closed otherwise.
Similarly, for a given $\eta \in \Omega$, $v$ is said to be $\eta$-open (or simply open) if $\eta(v) = 0$ 
and $\eta$-closed otherwise.
Then it follows that 
\[
\text{a site $v \in \mathbb{Z}^d$ is $\omega$-open if and only if $\pi_i(v)$ is $\omega_i$-open for all $i=1,\ldots,d$}.
\]
The connection with the formulation presented in Section \ref{sec:intr} is immediately made by identifying 
the set of removed sites with the set of $\omega$-closed sites.

For a fixed $\eta \in \Omega$, let us define the vacant set
\begin{equation}
 \label{eq:vaca}
\mathcal{V}(\eta) = \{ v \in \mathbb{Z}^d;~ \eta(v) =  1\}.
\end{equation}
Our goal is to study the connectivity properties of the random set $\mathcal{V}$ under the 
Bernoulli line percolation measure $\mathbb{P}_{\mathbf{p}}$ as we vary the parameter vector 
$\mathbf{p} = (p_1, \ldots, p_d)$.

A nearest-neighbors path (or simply a path) in $\mathbb{Z}^d$ is a finite or infinite chain of sites 
$v_1$, $v_2$,$\ldots$ all distinct from each other and such that $v_i$ is a neighbor of $v_{i+1}$ 
in the $\mathbb{Z}^d$-lattice. 
The origin in $\mathbb{Z}^d$ will be denoted by $\mathbf{0}$. 
For simplicity we will also write 
$\mathbf{0}$ to refer to $\pi_i(\mathbf{0})$ the origin of $\mathcal{P}_i$.
We say that $\mathcal{V}$ percolates or simply that there is percolation if 
$\mathbb{P}_{\mathbf{p}} (\{\mathbf{0} \leftrightarrow \infty \}) > 0$, where as in the last 
section $\{\mathbf{0} \leftrightarrow \infty\}$ stands for the event that there is an infinite self-avoiding path of nearest-neighbors sites contained in $\mathcal{V}$ and starting at the origin $\mathbf{0}$ of $\mathbb{Z}^d$.
Note that $\mathbb{P}_{\mathbf{p}}$ is invariant under the action of the group of translations that preserve the $\mathbb{Z}^d$-lattice.
Moreover it is ergodic (and even mixing) under such a translation provided that it does not preserve any of the coordinate axis.
Then $\mathbb{P}_{\mathbf{p}} (\{ \mathbf{0} \leftrightarrow \infty \} ) > 0$ is equivalent to $\mathbb{P}_{\mathbf{p}}(\{\mathcal{V} \text{ contains an infinite connected component} \}) = 1$.

For each nonnegative integer, let $B^d(n) = [-n,n] \cap \mathbb{Z}^d$ be the $l_\infty$-ball of radius $n$ around the origin and for each $v \in \mathbb{Z}^d$ let $B^{d}(v, n) = B^{d}(n) + v$.
We denote by $\partial{B}^d(v,n)$ the set of sites in $B^d(v,n)$ that have at least one of its nearest neighbors outside $B^d(v,n)$.
We also let $\{\mathbf{0} \leftrightarrow \partial{B}(n)\}$ be the event that there is a nearest-neighbors path in $\mathcal{V}$ connecting $\mathbf{0}$ to $\partial{B}^d(n)$.
Recalling that $\mathcal{P}_i$ is isomorphic to $\mathbb{Z}^{d-1}$ we write $B^{d-1}_i(n)$ for the $l_{\infty}$-ball of radius $n$ centered at the origin $\pi_i(\mathbf{0})$ of $\mathcal{P}_i$.
We also write $\{\mathbf{0} \leftrightarrow \infty\}$ and $\{\mathbf{0} \leftrightarrow \partial{B}^{d-1}_i(n)\}$ for the corresponding events in $\Omega_i$.

For a given configuration $\eta \in \Omega$, the set $C(\mathbf{0}) = C(\mathbf{0},\eta)$ is the set of points in $\mathbb{Z}^d$ that can be reached from $\mathbf{0}$ by a $\eta$-open path starting at $\mathbf{0}$.
This set will be called the connected component at the origin for the configuration $\eta$, or sometimes the connected component of $\eta$ containing the origin.
Similarly we define $C_i(\mathbf{0}) = C_i(\mathbf{0}, \omega_i)$ connected component of $\omega_i$ containing the origin.

\section{Phase transition and exponential decay}
\label{sec:phas_tran}
\label{tail_decay}

In this section we prove Theorem \ref{theo:phas_tran}, that establishes the existence of a non-trivial phase transition for $\mathbb{P}_{\mathbf{p}}$ and the exponential decay for the tail distribution of the radius of the connected component containing the origin, when least two of the parameters are subcritical.

The proof of Theorem \ref{theo:phas_tran} is divided in two steps.
First we prove the first assertion showing that $\mathcal{V}$ does not percolate when at least one  of the 
components of $\mathbf{p}$ is strictly smaller then $p_c(\mathbb{Z}^{d-1})$ and  any other of its parameters is not equal to one.

In the remainder of this section, we denote for short $\mathbb{P} = \mathbb{P}_{p_1} \times \ldots \times \mathbb{P}_{p_d}$ the law of $(\omega_1, \ldots, \omega_d)$ in $\Omega_1 \times \ldots \times \Omega_d$.

Recalling that
\[
\omega(v) = \prod_{i=1,\ldots, d} \omega_i (\pi_i(v)),
\]
we conclude that the $\pi_i$-projection of a nearest-neighbor path in $\mathbb{Z}^d$ containing two vertices $v$ and $w$ is a nearest-neighbor path (possibly consisting of a single site) in $\mathcal{P}_i$ connecting $\pi_i(v)$ to $\pi_i(w)$. 
Thus $\pi_i(C(v))$ is a connected subset of $C_i(\pi_i(v))$.
Let us denote
\[
 \mathcal{A}_i(n) = \{ \eta \in \Omega;~ \pi_i(C(\mathbf{0})) \text{ contains a path connecting } \mathbf{0} \text{ to } {B}_i ^{d-1}(n) \}.
\]
\begin{lemma}
\label{lemma:tube}
Assume that $p_2< 1$.
Then for any nonnegative integer $n_0$, 
\begin{equation}
\label{eq:lim_P}
\lim_{n\to\infty} \mathbb{P} (\{\omega \in \mathcal{A}_2(n)\} \cap \{C_1(\mathbf{0}) \subset {B}_1^{d-1}(n_0)\})=0.
\end{equation}
\end{lemma}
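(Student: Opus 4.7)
The plan is to show that conditioning on $\{C_1(\mathbf{0}) \subset B_1^{d-1}(n_0)\}$ traps $C(\mathbf{0})$ inside a tube of bounded transverse width, and then to reduce $\mathcal{A}_2(n)$ to the existence of a long $\omega_2$-open path in a quasi-one-dimensional slab of $\mathcal{P}_2$, where such paths are exponentially rare because $p_2 < 1$.

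First I would record the projection property $\pi_1(C(\mathbf{0})) \subset C_1(\mathbf{0})$: any $\omega$-open nearest-neighbor path from $\mathbf{0}$ to $v$ projects by $\pi_1$ to a nearest-neighbor walk in $\mathcal{P}_1$ (edges of $\mathbb{Z}^d$ map either to edges or degenerate to a single site), and since $\omega(w)=1$ forces $\omega_1(\pi_1(w))=1$, the projected walk consists of $\omega_1$-open sites. Hence on the conditioning event $C(\mathbf{0}) \subset T := \pi_1^{-1}(B_1^{d-1}(n_0))$, and a fortiori $\pi_2(C(\mathbf{0})) \subset T' := \pi_2(T)$. Under the natural identification $\mathcal{P}_2 \cong \mathbb{Z}^{d-1}$ with coordinates $(v_1, v_3, \ldots, v_d)$, the set $T'$ is the slab $\mathbb{Z} \times [-n_0, n_0]^{d-2}$, unbounded only along $e_1$.

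Since the sites of $\pi_2(C(\mathbf{0}))$ are all $\omega_2$-open, on the intersection $\mathcal{A}_2(n) \cap \{C_1(\mathbf{0}) \subset B_1^{d-1}(n_0)\}$ there must exist an $\omega_2$-open nearest-neighbor path in $T'$ from $\mathbf{0}$ to $\partial B_2^{d-1}(n)$. Once $n > n_0$, such a path is forced to exit $T'$ through $\{v_1 = n\}$ or $\{v_1 = -n\}$, and in particular it meets every transverse slice $S_k := \{v_1 = k\} \cap T'$ for $k = 0, 1, \ldots, n$ (respectively $k = 0, -1, \ldots, -n$). The main step is then a blocking argument: each $S_k$ has $(2n_0+1)^{d-2}$ sites, and the events $E_k := \{\omega_2(w) = 0 \text{ for all } w \in S_k\}$ satisfy $\mathbb{P}(E_k) = (1-p_2)^{(2n_0+1)^{d-2}} =: q$, which is strictly positive precisely because $p_2 < 1$; moreover the $\{E_k\}_{k \in \mathbb{Z}}$ are mutually independent because the $S_k$ are pairwise disjoint subsets of $\mathcal{P}_2$. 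Existence of the path forces $E_k^c$ for each $k$ in the relevant range, whence
\[
\mathbb{P}\bigl(\{\omega \in \mathcal{A}_2(n)\} \cap \{C_1(\mathbf{0}) \subset B_1^{d-1}(n_0)\}\bigr) \le 2(1-q)^{n+1},
\]
which tends to $0$ as $n \to \infty$.

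The only delicate step is the geometric reduction establishing $\pi_i(C(\mathbf{0})) \subset C_i(\mathbf{0})$ and the resulting containment of $\pi_2(C(\mathbf{0}))$ in the slab $T'$; once that is clear, the remainder is an elementary cross-section blocking estimate and the hypothesis $p_2 < 1$ enters exclusively to guarantee $q > 0$.
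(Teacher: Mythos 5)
Your proposal is correct and follows essentially the same route as the paper: project onto $\mathcal{P}_2$, observe that on the conditioning event the open cluster's projection is trapped in the slab $\mathbb{Z}\times[-n_0,n_0]^{d-2}$, and block with the independent events that an entire transverse slice is $\omega_2$-closed (the paper concludes via these slices occurring infinitely often, you via the equivalent quantitative bound $2(1-q)^{n+1}$). No gaps.
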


\begin{proof}
Since $\pi_1(C(\mathbf{0})) \subset  C_1(\mathbf{0})$, if  $C_1(\mathbf{0}) \subset {B}_1^{d-1}(n_0)$ then $\pi_1(C(\mathbf{0})) \subset {B}_1^{d-1}(n_0)$.
Thus
\[
 C(\mathbf{0}) \subset \pi_1^{-1}({B}_{1}^{d-1}(n_0)) = \mathbb{Z} \times [-n_0, n_0] \times \cdots \times [-n_0, n_0].
\]
Therefore, denoting 
\[
S_2(n_0) := \mathbb{Z} \times \{0\}  \times [-n_0,n_0] \times \cdots \times [-n_0,n_0] \subset \mathcal{P}_2
\]
(see Figure \ref{fig:tube}) it follows that if $C_1(\mathbf{0}) \subset {B}_1^{d-1}(n_0)$, then  $\pi_2(C(\mathbf{0})) \subset S_2 (n_0)$, and so
\[
 \{ \omega \in \mathcal{A}_2(n)\}\cap\{C_1(\mathbf{0}) \subset {B}_1^{d-1}(n_0)\} \subset \{\mathbf{0} \leftrightarrow \partial{B}_2^{d-1}(n) \text{ in } S_2(n_0) \}.
\]
where the last event in the equation above is a cylinder set in $\Omega_2$.
So we have that
\begin{equation}
\label{eq:lim_P_2}
\mathbb{P}(\{\omega \in \mathcal{A}_2(n)\}\cap\{C_1(\mathbf{0}) \subset {B}_1^{d-1}(n_0)\}) \leq \mathbb{P}_{p_2}(\{\mathbf{0} \leftrightarrow \partial{B}_2^{d-1}(n) \text{ in } S_2 (n_0) \}).
\end{equation}
For a $n>n_0$, if $\{ \mathbf{0} \leftrightarrow \partial{B}_2^{d-1}(n) \text{ in } S_2(n_0) \}$ occurs, then by the definition of $S_2$, we must have that either
\[
\begin{array}{l}
\mathcal{B}_2(n) := \{ \mathbf{0} \leftrightarrow (\{n\} \times \{0\} \times [-n_0,n_0] \times \cdots \times [-n_0, n_0]) \text{ in } S_2(n_0) \} \text{ or }\\
\mathcal{B}_2(-n) := \{ \mathbf{0} \leftrightarrow (\{-n\} \times \{0\} \times [-n_0,n_0] \times \cdots \times [-n_0, n_0]) \text{ in } S_2(n_0) \}
\end{array}
\]
occurs.
Since those two events have the same probability, it follows that 
\begin{equation}
\label{eq:lim_P_3}
\limsup_{n\to\infty}\mathbb{P}_{p_2}(\{\mathbf{0} \leftrightarrow \partial{B}_2^{d-1}(n) \in S_2(n_0) \}) \leq 2 \limsup_{n\to\infty} \mathbb{P}_{p_2} (\mathcal{B}_2(n)).
\end{equation}

Consider now, for each $k \in \mathbb{N}$ the events,
\[
 \mathcal{C}_2(k) = \{ \omega_2 \in \Omega_2;~\omega_2(v) = 0 \text{ for all } v \in \{k\} \times \{0\} \times [-n_0,n_0] \times \cdots \times [-n_0, n_0] \}.
\]
The events $\mathcal{C}(k)$, for $k \in \mathbb{Z}$ are independent with $\mathbb{P}_{p_2} (\mathcal{C}_2(k)) = (1-p_2)^{(2n_0+1)^{d-2}}>0$ since, by hypothesis, $p_2 < 1$.
Then
\[
\mathbb{P}_{p_2}(\{\mathcal{C}_2(k) \text{ occurs infinitely often}\}) = 1.
\]

Since $\mathcal{B}_2(n) \subset \cap_{k=1}^{n} \mathcal{C}_2(k)^c$, we have that $\lim_{n\to\infty}\mathbb{P}_{p_2}(\mathcal{B}_2(n)) = 0$.
The result follows then from \eqref{eq:lim_P_2} and \eqref{eq:lim_P_3}.
\end{proof}

\begin{figure}
\label{fig:tube}
\begin{tikzpicture}[scale=.7]
\draw (-6,-3) -- (0,0) -- (8,0) -- (2,-3) -- cycle;
\coordinate (o) at (.5,-1.5);
\draw[fill=gray!20] plot [smooth cycle] coordinates {($(o)+(-1.5,-.25)$) ($(o)+(-.5,.25)$) ($(o)+(1,.45)$) ($(o)+(0,-.45)$)};
\draw[fill=gray!20] plot [smooth cycle] coordinates {($(o)+(-1.5,-.25)+(0,6)$) ($(o)+(-.5,.25)+(0,6)$) ($(o)+(1,.45)+(0,6)$) ($(o)+(0,-.45)+(0,6)$)};
\node[above left] at ($(o)+(1,.5)$) {\tiny$C_1(\mathbf{0})$};
\node[below left] at ($(o)+((0.1,.1)$) {\tiny$\mathbf{0}$};
\fill (o) circle [radius=1pt];
\draw[dashed] ($(o)+(1,.5)$) -- (-2,-1) -- (-2,6) ($(o)+(0,-.5)$) -- (-4,-2) -- (-4,5);
\draw[dashed, red] ($(o)+(1,.5)+(0,4)$) -- (-2,3) ($(o)+(0,-.5)+(0,4)$) -- (-4,2);
\draw[dashed] ($(o)+(1,.5)+(0,6)$) -- (-2,5) ($(o)+(0,-.5)+(0,6)$) -- (-4,4) -- (-2,5);
\draw (0.25,-1) -- (-1.75,-2) -- (.75,-2) -- (2.75, -1) -- cycle;
\node[right] at (2.75, -1)  {\tiny$B^{d-1}_{1}(n_0)$};
\draw[red] (-2,3) -- (-4,2);
\node[below left] at (-4,2) {\tiny$k$};
\node[below left] at (-4,4) {\tiny$n$};
\draw[red, fill=red!20] plot [smooth cycle] coordinates {($(o)+(-1.5,-.25)+(0,4)$) ($(o)+(-.5,.25)+(0,4)$) ($(o)+(1,.45)+(0,4)$) ($(o)+(0,-.45)+(0,4)$)};
\draw[dotted] ($(o)+(-1.5,-.25)$) -- ($(o)+(-1.5,-.25)+(0,6)$) ($(o)+(1,.5)$) -- ($(o)+(1,.5)+(0,6)$) ($(o)+(0,-.5)$) -- ($(o)+(0,-.5)+(0,6)$);
\fill (-3,-1.5) circle [radius=1pt] (-2,-1) circle [radius=1pt] (-4,-2) circle [radius=1pt];
\node[above] at (-3, -1.5) {\tiny$0$};
\node[below] at (-2, -1) {\tiny$n_0$};
\node[below] at (-4, -2) {\tiny$-n_0$};
\node[below] at (-3,6.5) {\tiny$S_2(n_0)$};
\end{tikzpicture}
\caption{The cluster containing the origin in $\mathcal{P}_1$ is contained in the box $B^{d-1}_1$.
This implies that, the cluster containing $\mathbf{0}$ is restricted to the vertical 'tubular' region contained in the interior of $\pi_1^{-1} (B^{d-1})$. The red horizontal crossing inside $S_2(n_0)$ represents the closed sites in the definition of the event $\mathcal{C}_2(k)$. Note that the existence of infinitely many such crossings disconnects the 'tubular' region.}
\end{figure}
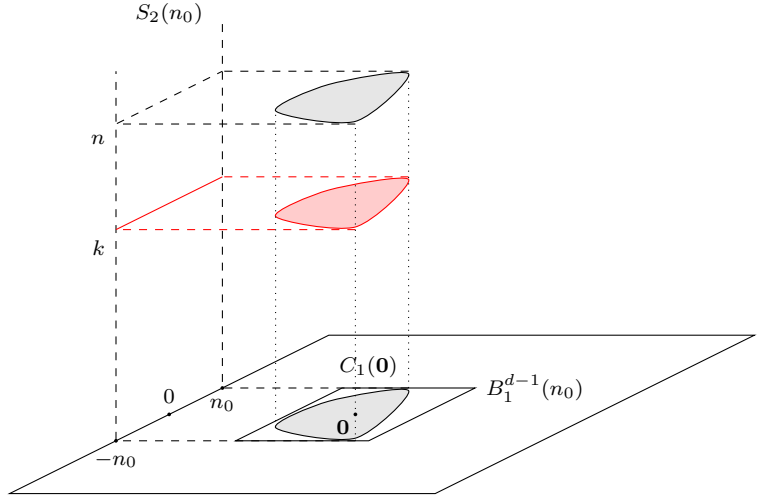

\begin{lemma}
\label{lemma:tube_2}
Assume that $p_2 <1$. Then
\begin{equation}
\label{eq:tube}
\mathbb{P} ( \{\omega \in \{\mathbf{0} \leftrightarrow \infty\}\} \cap \{C_1(\mathbf{0}) \subset {B}_1^{d-1}(n_0)\} ) = 0.
\end{equation}
\end{lemma}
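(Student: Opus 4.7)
The plan is to reduce this to Lemma \ref{lemma:tube} by showing that on the event under consideration the projection $\pi_2(C(\mathbf{0}))$ must reach $\partial B_2^{d-1}(n)$ in $\mathcal{P}_2$ for every $n \geq 0$, so that the event $\mathcal{A}_2(n)$ occurs for all $n$.

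First I would argue, exactly as at the beginning of the proof of Lemma \ref{lemma:tube}, that $C_1(\mathbf{0}) \subset B_1^{d-1}(n_0)$ forces
\[
C(\mathbf{0}) \subset \pi_1^{-1}(B_1^{d-1}(n_0)) = \mathbb{Z} \times [-n_0,n_0]^{d-1},
\]
i.e.\ the connected component of the origin lies inside a thin tube whose axis is parallel to $e_1$ and whose cross-section is bounded. On the event $\{\mathbf{0} \leftrightarrow \infty\}$ the set $C(\mathbf{0})$ is infinite, and since the last $d-1$ coordinates of its vertices are already confined to the finite set $[-n_0,n_0]^{d-1}$, the first coordinates $v_1$ of the vertices of $C(\mathbf{0})$ must be unbounded (a trivial pigeonhole observation).

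Next, I would use that $\pi_2$ leaves the $e_1$-coordinate untouched and that, as noted in the paragraph preceding Lemma \ref{lemma:tube}, the $\pi_2$-image of any nearest-neighbor path in $\mathbb{Z}^d$ is a nearest-neighbor path in $\mathcal{P}_2$. Hence $\pi_2(C(\mathbf{0}))$ is a connected subset of $\mathcal{P}_2$ containing $\mathbf{0}$ whose vertices have arbitrarily large $|v_1|$. Therefore $\pi_2(C(\mathbf{0}))$ contains, for every $n\geq 0$, a nearest-neighbor path from $\mathbf{0}$ to $\partial B_2^{d-1}(n)$, which is precisely the occurrence of the event $\mathcal{A}_2(n)$.

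Combining these two observations gives the inclusion
\[
\{\omega \in \{\mathbf{0}\leftrightarrow\infty\}\} \cap \{C_1(\mathbf{0})\subset B_1^{d-1}(n_0)\} \;\subset\; \{\omega\in\mathcal{A}_2(n)\} \cap \{C_1(\mathbf{0})\subset B_1^{d-1}(n_0)\}
\]
valid for every $n\geq 0$. Taking $\mathbb{P}$-probabilities, letting $n\to\infty$, and invoking Lemma \ref{lemma:tube} (which applies because $p_2<1$) yields \eqref{eq:tube}. I do not foresee any serious obstacle: the only step beyond set manipulations is the pigeonhole observation that an infinite cluster confined to a thin cylinder must extend to infinity along the cylinder's axis, which occupies at most a line.
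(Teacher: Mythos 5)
Your proof is correct and follows essentially the same route as the paper: both reduce the statement to Lemma \ref{lemma:tube} via the inclusion $\{\omega\in\{\mathbf{0}\leftrightarrow\infty\}\}\cap\{C_1(\mathbf{0})\subset B_1^{d-1}(n_0)\}\subset\{\omega\in\mathcal{A}_2(n)\}\cap\{C_1(\mathbf{0})\subset B_1^{d-1}(n_0)\}$ for all large $n$, and then let $n\to\infty$. The only (harmless) difference is the deterministic step used to justify the inclusion: you observe that an infinite cluster confined to the tube $\mathbb{Z}\times[-n_0,n_0]^{d-1}$ must be unbounded in the $e_1$-direction, which $\pi_2$ preserves, while the paper notes that reaching $\partial B^d(n)$ forces at least $d-1$ of the projected events $\mathcal{A}_i(n)$ and that $i=1$ is excluded by the confinement.
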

\begin{proof}
Whenever $\omega \in \{C_1(\mathbf{0}) \subset {B}_1^{d-1}(n_0)\}$, then $\omega \notin \mathcal{A}_1(n)$ for all $n> n_0$.
On the other hand, whenever $\omega \in \{\mathbf{0} \leftrightarrow \partial{B}^d(n)\}$ then, for at least $d-1$ indices $i \in \{1,\ldots,d\}$ we have that $\omega_i \in \mathcal{A}_i(n)$.
Indeed, the ending point of the a path from $\mathbf{0}$ to $\partial B^{d-1}(n)$ is a vertex containing at least one coordinate equal to $n$.
Assuming, without loss of generality, that this is the last coordinate, then the $\pi_1, \ldots, \pi_{d-1}$ projections of this vertex also have the last coordinate equal to $n$.

Noting that $\{\mathbf{0} \leftrightarrow \infty\}= \displaystyle{\bigcap_{n\geq 1}} \{\mathbf{0} \leftrightarrow \partial{B}^d(n)\}$ it follows that,
\begin{equation}
 \{\omega \in \{\mathbf{0} \leftrightarrow \infty\}\} \cap \{C_1(\mathbf{0}) \subset {B}_1^{d-1}(n_0)\} \subset \bigcap_{n>n_0} \{\omega \in \mathcal{A}_2 (n)\}\cap\{C_1(\mathbf{0}) \subset {B}_1^{d-1}(n_0)\}.
\end{equation}
The result follows immediately by computing the probability of both sides of the above equation and then using equation \eqref{eq:lim_P} in the right-hand side.
\end{proof}

We now present the proof that the model undergoes a non-trivial phase transition.

\begin{proof}[Proof of Theorem \ref{theo:phas_tran}:]
We start by proving the first assertion in Theorem \ref{theo:phas_tran} which is a direct consequence of equation \eqref{eq:tube}.

Let us assume, without loss of generality that $p_1 < p_c(\mathbb{Z}^{d-1})$ and $p_2<1$.
Then
 \[ 
 \mathbb{P}_{p_1} \Big( \bigcup_{n_0=1}^{\infty} \{C_1(\mathbf{0}) \subset B_1^{d-1}(n_0) \}\Big) = \mathbb{P}_{p_1}( \{\mathbf{0} \nleftrightarrow \infty\}) =1,
 \]
thus using  equation \eqref{eq:tube} in Lemma \ref{lemma:tube_2} we obtain
\begin{equation}
\begin{array}{l}
\mathbb{P}_{\mathbf{p}}(\{\mathbf{0} \leftrightarrow \infty\}) = \mathbb{P} (\{ \omega \in \{\mathbf{0} \leftrightarrow \infty\}\}) = \\
=  \mathbb{P} \Big( \displaystyle{\bigcup_{n_0=1}^{\infty}} \{ \omega \in \{\mathbf{0} \leftrightarrow \infty\}\} \cap \{C_1(\mathbf{0}) \subset B_1^{d-1}(n_0) \}\Big) =0.
\end{array}
\end{equation}

We now prove the second assertion of Theorem \ref{theo:phas_tran}.
Fix a integer $k$ and let
\begin{equation}
\label{eq:graph}
\mathcal{P} = \{v = (v_1, \ldots, v_d) \in \mathbb{Z}^d;~v_i = 0 \text{ for } i>3, \text{ and } \sum_{i=1}^3 v_i \in  \{k-1, k, k+1\}\} ,
\end{equation}
seen as a subgraph.
Roughly speaking $\mathcal{P}$ is a connected subset of $\mathbb{Z}^d$ resembling a $2$-dimensional affine plane with normal vector $(1,1,1, 0,\ldots,0)$, (see Figure \ref{fig:G} for an illustration of $\mathcal{P}$ when $d=3$).
Noting that $\mathcal{P}$ has bounded degree and using an straightforward comparison with percolation in the honeycomb lattice, it is standard that $\mathcal{P}$ has a nontrivial critical point for Bernoulli site percolation.

\begin{figure}
\label{fig:G}

\begin{tikzpicture}[scale=.6]
\coordinate (x1) at (0,0);
\coordinate (x2) at ($(x1)+(210:1)+(0,-1)$);
\coordinate (x3) at ($(x1)+(330:1)+(0,-1)$);
\coordinate (x4) at ($(x2)+(210:1)+(0,-1)$);
\coordinate (x5) at ($(x3)+(210:1)+(0,-1)$);
\coordinate (x6) at ($(x3)+(330:1)+(0,-1)$);
\coordinate (x7) at ($(x4)+(210:1)+(0,-1)$);
\coordinate (x8) at ($(x5)+(210:1)+(0,-1)$);
\coordinate (x9) at ($(x6)+(210:1)+(0,-1)$);
\coordinate (x10) at ($(x6)+(330:1)+(0,-1)$);
\coordinate (x11) at ($(x7)+(210:1)+(0,-1)$);
\coordinate (x12) at ($(x8)+(210:1)+(0,-1)$);
\coordinate (x13) at ($(x9)+(210:1)+(0,-1)$);
\coordinate (x14) at ($(x10)+(210:1)+(0,-1)$);
\coordinate (x15) at ($(x10)+(330:1)+(0,-1)$);

\foreach \i in {1,...,15}
 \foreach \j in {0,...,5}
  \foreach \k in {1,...,3}
 {
\fill (x\i) circle [radius=2pt];
\fill ($(x\i)+(30+\j*60:1)$) circle [radius=2pt];
\draw[thick] ($(x\i)+(30+\j*60:1)$) -- ($(x\i)+(-30+\j*60:1)$) (x\i) -- ($(x\i)+(-30+\k*120:1)$);
}

\draw[->] (0,1) -- +(0,2);
\draw[->] ($(x15)+(330:1)$) -- +(330:2);
\draw[->] ($(x11)+(210:1)$) -- +(210:2);
\draw[dotted] (0,0)--(0,-4) (x15)--(0,-4) (x11)--(0,-4);

\node[right] at (0,3) {\tiny$n e_1$};
\node[below] at ($(x15)+(330:3)$) {\tiny$n e_2$};
\node[below] at ($(x11)+(210:3)$) {\tiny$n e_3$};

\node[right] at (-0.2,1.3) {\tiny$k$};
\node[left] at ($(x11)+(205:1)$) {\tiny$k$};
\node[right] at ($(x15)+(335:1)$) {\tiny$k$};

\node at (210:4.5) {\tiny$\mathcal{P}_2$};
\node at (330:4.5) {\tiny$\mathcal{P}_3$};
\node at (0,-8) {\tiny$\mathcal{P}_1$};
\end{tikzpicture}
\caption{This figure shows a portion of the graph $G$ when $d=3$ and $k$ is chosen to be equal to $5$ in \eqref{eq:graph}. Note that the projections of each vertices into $\mathcal{P_i}$ coincides at most two other vertices.}
\end{figure}
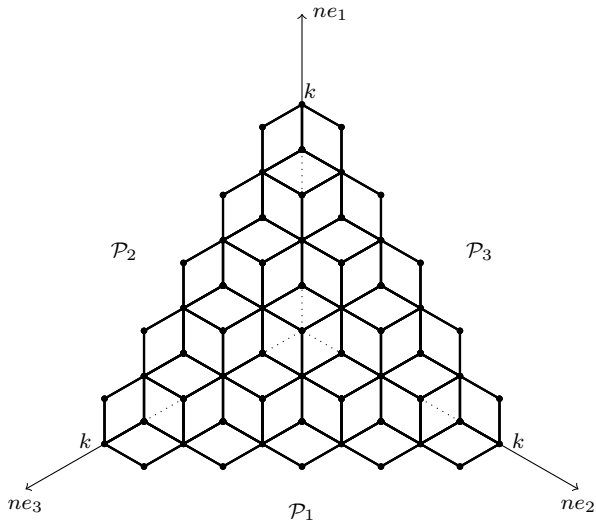

Note that, two sites in $\mathcal{P}$ can only be projected into the same point in $\mathcal{P}_i$ if they both belong to the set $\{v, v+e_i, v-e_i\}$ for some $v \in \mathcal{P}$.
So, in view of \eqref{eq:coor_perc} we conclude that $\{\omega(u);~u \in \mathcal{P}\}$ is a two-dependent percolation process in $\mathcal{P}$ in which each site $u$ is open with marginal probability $\rho = p_1\cdots p_d$.
By \cite[Theorem 0.0]{liggett97}, if we take $\rho$ large enough, then the restriction of $\mathbb{P}_{\mathbf{p}}$ to $\{0,1\}^\mathcal{P}$ dominates stochastically a supercritical Bernoulli site percolation on $\{0,1\}$.
Clearly this can be accomplished by making $p_i$ large enough for all $i=1,\ldots, d$.
Therefore, provided that all the parameters $p_i$ are large enough, we have that
$ \mathbb{P}_{\mathbf{p}}(\{ \mathbf{0} \leftrightarrow \infty \text{ in } \mathcal{P} \}) > 0$.
Since, $\mathbb{P}_{\mathbf{p}}(\{ \mathbf{0} \leftrightarrow \infty\}) \geq \mathbb{P}_{\mathbf{p}}(\{\mathbf{0} \leftrightarrow \infty \text{ in } \mathcal{P}\})$, the proof is finished.

Let us now prove (\ref{eq:expo_deca}) which is a direct consequence of the theorem of Menshikov \cite{Menshikov86} and Aizenman and Barsky \cite{Aizenman87} on the exponential decay for Bernoulli site percolation.
Assuming without loss of generality, that $p_1<p_c(\mathbb{Z}^{d-1})$ and $p_2<p_c(\mathbb{Z}^{d-1})$ are subcritical parameters of $\mathbf{p}$ then there are positive constants $\psi_1 = \psi_1(p_1)$ and $\psi_2 = \psi_2(p_2)$ such that
\begin{equation}
\label{eq:mens}
\mathbb{P}_{p_i} (\{\mathbf{0} \leftrightarrow \partial{B}^{d-1}(n)\}) \leq e^{-\psi_i(p_i) n}, \text{ for $i = 1,2$ and for all $n \geq 1$}.
\end{equation}

If $\omega \in \{ \mathbf{0} \leftrightarrow \partial B^d(n) \}$ then  $\omega_k \in \left\{ \mathbf{0} \leftrightarrow \partial{B_k^{d-1}(n)} \right\}$ for at least $d-1$ different indices $k \in \{1,\ldots, d\}$.
Then, writing $\mathbb{P}$ for $\mathbb{P}_{p_1} \times \cdots \times \mathbb{P}_{p_d}$ we have:
\begin{equation}
\label{eq:theo_expo_deca_2}
\mathbb{P}_{\mathbf{p}} \left( \{ \mathbf{0} \leftrightarrow \partial B^d(n) \} \right) \leq 
 \sum_{ \substack{A \subset \{1,\ldots,d\} \\ |A| = d-1}} \mathbb{P} \left( \cap_{j\in A} \in \left\{ \mathbf{0} \leftrightarrow \partial{B_j^{d-1}(n)} \right\} \right)
\end{equation}
Any fixed subset $A \subset \{1,\ldots,d\}$ with $|A| =d-1$ must contain at least one of the indices $1$ or $2$.  
By equation (\ref{eq:mens}) we have that:
\[ 
\mathbb{P} \left( \cap_{j \in A} \left\{ \mathbf{0} \leftrightarrow \partial{B_j^{d-1}(n)} \right\} \right) \leq \exp \left(-\psi_1(p_1)\right) + \exp \left(-\psi_2(p_2) \right) \leq \exp \left(- \alpha(\mathbf{p})n \right),
\]
for all $n \geq 1$ and for some $\alpha(\mathbf{p})>0$.
Plugging that into equation (\ref{eq:theo_expo_deca_2}) yields:
\[
\mathbb{P}_{\mathbf{p}} \left( \{ \mathbf{0} \leftrightarrow \partial B^d(n) \} \right) \leq d \exp \left(- \alpha(\mathbf{p})n \right) \text{ for all $n \geq 1$},
\]
finishing thus the proof of equation \eqref{eq:expo_deca}.
\end{proof}
\begin{remark}
As it can be seen from the proof of equation \eqref{eq:phas_tran_1}, the hypothesis that $p_i < p_c(\mathbb{Z}^{d-1})$ was only used through the fact that it implies that $\mathbb{P}_p(\{ \mathbf{0} \leftrightarrow \infty \}) =0$.
In particular, for $d=3$ and $d \gg 1$ (\textit{e.g.} $d \geq 20$) it can be replaced by the condition $p_i \leq p_c(\mathbb{Z}^{d-1})$.
\end{remark}

\section{Polynomial decay}
\label{sec:sub_exp}

This section is mainly dedicated to the proof of Theorem \ref{theo:conn_deca_2}.
\medskip

\noindent \textbf{Restricting the vacant set to $\mathbb{Z}^3$}

As it will become clear below, it will be useful to restrict the model to a $3$-dimensional vector subspace of $\mathbb{Z}^d$.
For that, let us denote by $\mathcal{S}$ the vector subspace of $\mathbb{Z}^d$ generated by the vectors $e_1$, $e_2$ and $e_3$ (later on we will identify $\mathcal{S}$ with the cubic lattice $\mathbb{Z}^3$ itself).
Given $v \in \mathcal{S}$ we write
\[
\omega'(v) : = \omega_1(\pi_1(v)) \omega_2(\pi_2(v)) \omega_3(\pi_3(v)) \,\,\, \text{ and } \,\,\, \omega''(v) := \prod_{i =4}^{d} \omega_i (\pi_i (v))
\]
with the convention that $\omega'' \equiv 1$ in case $d =3$.
It follows from \eqref{eq:omeg} that, for each $v$ in $\mathcal{S}$,
\begin{equation}
\omega(v) = \omega'(v) \omega''(v).
\end{equation}

Naturally identifying $\mathcal{S}$ with $\mathbb{Z}^3$, it makes sense to speak of $3$-dimensional Bernoulli line percolation in $\mathcal{S}$.
Indeed one can check that 
\begin{equation}
\label{eq:rand_fiel_omeg}
\begin{array}{c}
\text{the random field $\{ \omega' (v); v \in \mathcal{S} \}$ has the law of a 3-dimensional } \\ 
\text{Bernoulli line percolation process in $\mathcal{S}$ with parameters $\mathbf{p}' := (p_1, p_2, p_3)$}.
\end{array}
\end{equation}
For $v \in \mathcal{S}$, we have that $\pi_i(v) = v$ for any $i=4,\ldots,d$.
Therefore one can also check that
\begin{equation}
\label{eq:rand_fiel_omeg'}
\begin{array}{c}
\text{the random field $\{ \omega''(v); v \in \mathcal{S} \}$ has the law of a Bernoulli} \\ \text{site percolation process on $\mathcal{S}$ with parameter $p'':= p_4 \cdots p_d$},
\end{array}
\end{equation}
(with the convention $p''=1$ in case $d = 3$).

From now on we do not distinguish between $\mathcal{S}$ and $\mathbb{Z}^3$.
Then, in view of equations \eqref{eq:rand_fiel_omeg} and \eqref{eq:rand_fiel_omeg'}, the restriction of the $\mathcal{V}$ to $\mathbb{Z}^3$ has the law of the vacant set of a $3$-dimensional Bernoulli line percolation diminished by an independent $3$-dimensional Bernoulli site percolation.

Let us denote for the moment, $\mathbb{P} = \mathbb{P}_{p_1} \times \cdots \times \mathbb{P}_{p_d}$.
In view of Lemma \ref{lemma:tube_2}, in the event that $|C(\omega_1)| < \infty$, we have that $\mathbb{P} (\omega \in \{0 \leftrightarrow \infty\}) = 0$ thus, in order to prove Theorem \ref{theo:conn_deca_2}, it suffices to show that there exists positive constants $\alpha$ and $\alpha'$ depending on $\mathbf{p}$ such that, for every large enough integer $k$,
\begin{equation}
\label{eq:poly_deca_rest}
\mathbb{P} (\omega \in \{0 \leftrightarrow \partial{B}^3(k) \text{ in $\mathbb{Z}^3$}\},\, |C(\omega_1)| < \infty) \geq \alpha' k^{-\alpha}.
\end{equation}

\medskip

\noindent \textbf{A word about the notation}

In order to obtain equation \eqref{eq:poly_deca_rest} we will restrict ourselves to the $\mathbb{Z}^3$ lattice as discussed above.
For this reason, we use a particular notation throughout this section.
Sites will be denoted by $(x,y,z) \in \mathbb{Z}^3$ and the $2$-dimensional coordinate planes will be by definition:
\begin{equation*}
\begin{split}
 \mathcal{P}_1 & = \{(x,y,0);~y,z \in \mathbb{Z}\}; \\
 \mathcal{P}_2 & = \{(x,0,z);~x,z \in \mathbb{Z}\}; \\
 \mathcal{P}_3 & =  \{(0,y,z);~x,y \in \mathbb{Z}\}.
\end{split}
\end{equation*}
We change the role of the symbols $\pi_i$ that will now denote the orthogonal projections from $\mathbb{Z}^3$ into $\mathcal{P}_i$ for $i=1,2,3$.
The reader should remember that we are considering a model in $3$ dimensions however with $d$ parameters, $p_1, \ldots, p_d$ from which we generate $\mathbf{p}'=(p_1,p_2,p_3)$ and $p''= p_4 \cdots p_d$.
We will also change the role of the symbols $\omega_i$ to refer now to $2$-dimensional percolation processes defined in $\mathcal{P}_i$ with law $\mathbb{P}_{p_i}$.
We denote by $\omega' (v) = \omega_1 (\pi_1(v)) \omega_2(\pi_2(v)) \omega_3(\pi_3(v))$ the random element whose law in $\{0,1\}^{\mathbb{Z}^3}$ is given by $\mathbb{P}_{\mathbf{p}'}$.
We denote by $\omega''$ the random element whose law in $\{0,1\}^{\mathbb{Z}^3}$ is given by $\mathbb{P}_{p''}$.
Finally we write, for each $v \in \mathbb{Z}^3$, $\omega(v) = \omega'(v) \omega''(v)$.
\medskip

\noindent \textbf{Organization of the section}

The remainder of this section is organized as follows:
In subsection \ref{subsec:cro_eve} we construct a rescaled lattice whose sites are blocks of vertices from the original $\mathbb{Z}^3$-lattice.
Such a block is declared to be \emph{good} depending on the state of the $\omega''$ processes inside it and also on the occurrence of some crossings for the $\omega_2$ and $\omega_3$ processes restricted to a finite region around its projection into $\mathcal{P}_2$ and $\mathcal{P}_3$ respectively.
Provided that the side length of the blocks are chosen sufficiently large and that $p''$ is very large, we obtain regions in the block-lattice where we can find, with high probability, crossings of good blocks.
In subsection \ref{subsec:con_pat} we show that to each such crossing of good blocks there corresponds long paths in the original $\mathbb{Z}^3$-lattice that are simultaneously $\omega''$, $\omega_2$ and $\omega_3$-open.
In subsection \ref{subsec:poly_deca} we show how to use those long paths for building long finite $\omega$-open paths in $\mathbb{Z}^3$ which ultimately yields equation \eqref{eq:poly_deca}.

\subsection{Crossing events in a block lattice}
\label{subsec:cro_eve}

Let $R(n,m) \in \mathbb{Z}^2$ denote the following rectangular region:
\[
R(n,m) =\{(x,y) \in \mathbb{Z}^2; ~0 \leq x \leq n-1, ~0 \leq y \leq m-1\}.
\]
For $l,~ k \in \mathbb{Z}$ we define 
\[
R(n,m;k,l) = R(n,m)+\{(k,l)\}.
\]
When $n=m$ we may drop the index $m$ and write simply $R(n;k,l)$ for referring to $R(n,n;k,l)$.
A path of nearest-neighbors sites traversing $R(n,m;k,l)$ from its left side to its right side will be called a \emph{left-to-right crossing} in $R(n,m;k,l)$. 
Similarly we define a \emph{bottom-to-top crossing} in $R(n,m;k,l)$.
For a given $\eta \in \{0,1\}^{\mathbb{Z}^2}$ we say that a crossing is open if all of its sites are $\eta$-open.
We also define the following events
\begin{equation}
\label{eq:cro}
 \begin{split}
  \mathcal{A}(n,m;k,l) & = \{ \text{there is an open left-to-right crossing in}~ R(n,m;k,l) \}, \\
  \mathcal{B}(n,m;k,l) & = \{ \text{there is an open bottom-to-top crossing in}~ R(n,m;k,l) \},
 \end{split}
\end{equation}
 and we write, for instance, $\mathcal{B}(n,m)$ in place of $\mathcal{B}(n,m;0,0)$. Recalling that each $\mathcal{P}_i$ is isomorphic to $\mathbb{Z}^2$, we write $R_i (n,m;k,l)$ in order to refer to the analogue of the rectangles $R(n,m;k,l)$ that lie on $\mathcal{P}_i$ and $\mathcal{A}_i (n,m;k,l)$ and $\mathcal{B}_i (n,m;k,l)$ for the events  in $\{0,1\}^{\mathcal{P}_i}$ that are analogous to the crossing events appearing in \eqref{eq:cro}. 

We will use repeatedly the fact that, if $p>p_c(\mathbb{Z}^2)$ and $c=c(p)$ is a sufficiently large constant (\textit{e.g.}~greater than the correlation length for $\mathbb{P}_p$), then:
\begin{equation}
\label{eq:cro_log_2}
\lim_{n \to \infty} \mathbb{P}_p \left( \mathcal{B} \left( \lceil c\log n \rceil, n \right) \right) = 1.
\end{equation}
This fact is standard in $2$-dimensional Bernoulli site percolation
(for reader convenience we provide a proof in the Appendix \ref{app:cro_log}).

Let $\tilde{\Gamma}_n(0,0,0) = ([0,n-1)\times [0,n-1)\times [0,n-1)) \cap \mathbb{Z}^3$
and define 
\[
\tilde{\Gamma}_n(j,l,h)= \tilde{\Gamma}_n(0,0,0) + \{(jn, ln, hn)\}.
\] 
(see Figure \ref{fig:good_box} for a illustration of $\tilde{\Gamma}_n(3,3,3)$).
Note that $\tilde{\Gamma}_n(j,l,h)$ is a block with side length $n$ and satisfies
\[
\begin{split}
\pi_1 \left( \tilde{\Gamma}_n(j,l,h) \right) & = R_1 (n;jn,ln),\\
\pi_2 \left( \tilde{\Gamma}_n(j,l,h) \right) & = R_2 (n;jn,hn),\\
\pi_3 \left( \tilde{\Gamma}_n(j,l,h) \right) & = R_3 (n;ln,hn).
\end{split}
\] 
We define
\[ 
\Lambda_n = \{ \tilde{\Gamma}_n(j,l,h); ~j, l, h \in \mathbb{Z} \}
\]
and insert an edge between two blocks $\tilde{\Gamma}_n(j,l,h)$ and $\tilde{\Gamma}_n(j',l',h')$ whenever $|j - j'| + |l - l'| + |h - h'| = 1$, making thus $\Lambda_n$ isomorphic to the $\mathbb{Z}^3$-lattice.
We define also the event
\[
\mathcal{C}(n;j,l,h) = \{\text{$\eta \in \{0,1\}^{\mathbb{Z}^3}$; all the sites in $\tilde{\Gamma}_n(j,l,h)$ are $\eta$-open}\}.
\]

\begin{definition}
Given $(\omega_2, \omega_3, \omega'') \in \mathcal{P}_2 \times \mathcal{P}_3 \times \{0,1\}^3$, a block $\tilde{\Gamma}_n(j,l,h)$ is said to be \emph{good} if:
\begin{equation*}
\begin{split}
\omega_2 & \in \mathcal{A}_2 (2n,n;jn,hn) \cap \mathcal{B}_2 (n,2n;jn,hn),\\\omega_3 & \in \mathcal{A}_3 (2n,n;ln,hn) \cap \mathcal{B}_3 (n,2n;ln,hn), \\\omega''  & \in \mathcal{C}(n;j,l,h).
\end{split}
\end{equation*}
See figure \ref{fig:good_box}).
\end{definition}

For $c > 0$ and a nonnegative integer $k \in \mathbb{Z}$ let
\begin{equation*}
\begin{split}
& \tilde{R}_n (c \log k, k) =  \left\{ \tilde{\Gamma}_n \left( \lfloor j/2 \rfloor, ~\lceil j/2 \rceil,~ h \right) \in \Lambda_n;\; 0 \leq j \leq \lceil c \log k \rceil -1, ~ 0 \leq h \leq k-1 \right\}
\end{split} 
\end{equation*}
where, for a real number $a$, $\lceil a \rceil = \min\{n \in \mathbb{Z}; ~ n \geq a \}$ and $\lfloor a \rfloor = \max \{n \in \mathbb{Z}; ~ n \leq a\} $. 
When regarded as a sub-graph of $\Lambda_n$, $\tilde{R}_n(c\log k, k)$ is isomorphic to the rectangular region $R(\lceil c \log k \rceil, k ) \subset \mathbb{Z}^2$ (see Figure \ref{fig:good_box}).
This allows us to define the event
\[ \tilde{\mathcal{B}}_n (c\log k , k) = \left\{ \exists \text{ a bottom-to-top crossing of good boxes in} ~ \tilde{R}_n (c\log k, k) \right\} \]
wich are analogous to the event $\mathcal{B}(\rceil c \log{k}, k)$ defined above.

\begin{figure}[htb]
\centering
\includegraphics[width=0.8\textwidth]{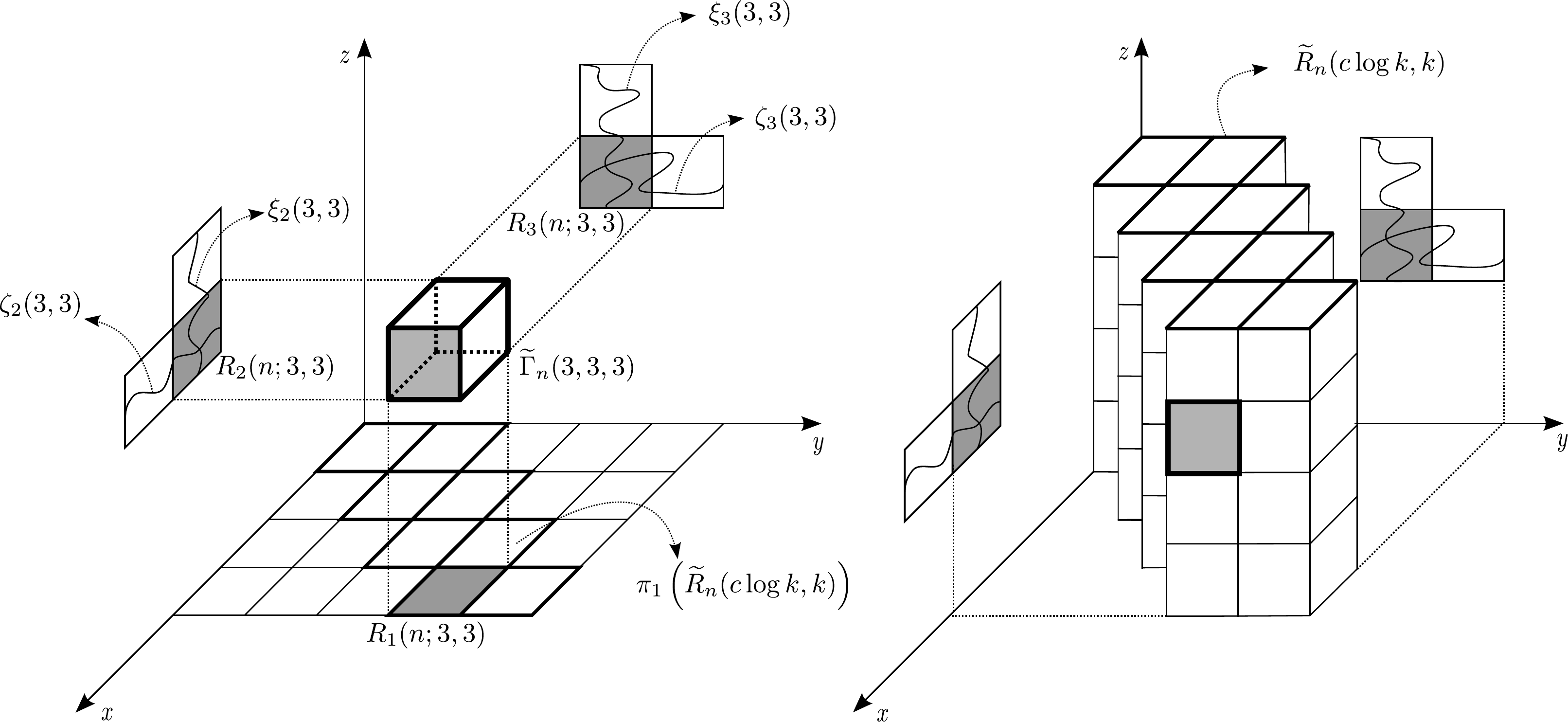}
\caption[Definition of a block to be good and of the subset $\tilde{R}_n(c\log k, k)$]{In this picture, the block $\tilde{\Gamma}_n(3,3,3)$ is a good block. In the left picture we see the definitions of the paths $\xi_i (3,3)$ and $\zeta_i (3,3)$ for $i =2, 3$. In the right we represent the set $\tilde{R}_n(c\log k, k)$.}
\label{fig:good_box}
\end{figure}

\begin{lemma}
\label{lemma:pat_blo_goo}
Let $p_2,~p_3 > p_c(\mathbb{Z}^2)$.
There exist $n = n(p_2, p_3) \in \mathbb{Z}_{+}$, $c > 0$, $\delta  >0$ and $p_{\bullet} \in (0,1)$ such that, if $p'' > p_{\bullet}$ then
\begin{equation}
\label{eq:Pp2xPp3}
\mathbb{P}_{p_2} \times \mathbb{P}_{p_3} \times \mathbb{P}_{p''} \left( \tilde{\mathcal{B}}_n (c \log k, k) \right) \geq \delta, \text{  for all $k \in \mathbb{Z}$}.
\end{equation}
\end{lemma}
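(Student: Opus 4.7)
The strategy is to compare the goodness field on the block lattice with a supercritical Bernoulli site percolation on $\mathbb{Z}^2$ and then to invoke the logarithmic-width crossing estimate \eqref{eq:cro_log_2}. The argument splits into three steps.

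\emph{Step 1 --- each block is good with probability close to $1$.} Since $p_2, p_3 > p_c(\mathbb{Z}^2)$, standard RSW / block arguments (of the same type used to prove \eqref{eq:cro_log_2}) give $\mathbb{P}_{p_i}(\mathcal{A}_i(2n,n) \cap \mathcal{B}_i(n,2n)) \to 1$ as $n \to \infty$ for $i = 2, 3$. Fix a small $\epsilon > 0$ (to be tuned) and choose $n = n(p_2, p_3)$ so large that the above probability exceeds $1 - \epsilon$ for both $i = 2$ and $i = 3$. With $n$ fixed, the event $\mathcal{C}(n; j, l, h)$ has probability $(p'')^{n^3}$, which exceeds $1 - \epsilon$ for all $p'' > p_{\bullet}$ as soon as $p_{\bullet}$ is close enough to $1$. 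By the mutual independence of $\omega_2$, $\omega_3$ and $\omega''$, each block is then good with probability at least $(1-\epsilon)^3$.

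\emph{Step 2 --- finite-range dependence and stochastic domination.} Re-index the blocks of $\tilde R_n(c\log k, k)$ by planar coordinates $(j', h') \in [0, \lceil c \log k \rceil - 1] \times [0, k-1]$, so that $(j', h')$ labels the block $\tilde\Gamma_n(\lfloor j'/2\rfloor, \lceil j'/2\rceil, h')$. Its goodness depends only on $\omega_2$ inside an $O(n) \times O(n)$ window centered at $(\lfloor j'/2\rfloor\, n,\, h' n) \in \mathcal{P}_2$, on $\omega_3$ inside the analogous window in $\mathcal{P}_3$, and on $\omega''$ inside the block itself. These supports become pairwise disjoint as soon as $|j'-j''| \geq 4$ or $|h' - h''| \geq 2$; the factor $4$ in the $j'$-direction accounts for the fact that $\lfloor j'/2\rfloor$ and $\lceil j'/2\rceil$ each advance only on every other value of $j'$. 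Hence the goodness field is a finite-range dependent Bernoulli field on the planar block lattice, and by the Liggett-Schonmann-Stacey theorem \cite{liggett97}, taking $\epsilon$ small enough makes it stochastically dominate a Bernoulli site percolation on $\mathbb{Z}^2$ with parameter $q > p_c(\mathbb{Z}^2)$.

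\emph{Step 3 --- conclusion via \eqref{eq:cro_log_2}.} For this $q$, let $c = c(q)$ be the constant supplied by \eqref{eq:cro_log_2}, so that $\mathbb{P}_q(\mathcal{B}(\lceil c\log k\rceil, k)) \to 1$ as $k \to \infty$. Since $\tilde R_n(c\log k, k)$ is isomorphic to $R(\lceil c\log k\rceil, k)$ in the block lattice, stochastic domination transfers this lower bound to $\mathbb{P}_{p_2} \times \mathbb{P}_{p_3} \times \mathbb{P}_{p''}(\tilde{\mathcal{B}}_n(c\log k, k))$, which therefore exceeds $1/2$ for all $k \geq k_0$; the finitely many smaller values of $k$ contribute only a positive lower bound, which we absorb by taking $\delta$ to be their minimum with $1/2$, yielding \eqref{eq:Pp2xPp3}. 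The only delicate point is the dependence-range check in Step 2: because $\lfloor j'/2\rfloor$ and $\lceil j'/2\rceil$ do not move in lockstep, the range in the $j'$-direction is twice what one might naively expect, but this is just bookkeeping and does not threaten finite-range dependence.
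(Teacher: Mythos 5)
Your proposal is correct and follows essentially the same route as the paper's proof: make the single-block goodness probability high by taking $n$ large and $p''$ close to $1$, observe that the goodness field on the re-indexed block lattice is a finite-range dependent field, invoke the Liggett--Schonmann--Stacey theorem to dominate a supercritical Bernoulli field, and conclude with the logarithmic-width crossing estimate \eqref{eq:cro_log_2}. The only differences are cosmetic (the paper bounds the goodness probability via Harris-FKG with explicit exponents and asserts two-dependence, while you use independence of $\omega_2,\omega_3,\omega''$ and track a slightly larger but still finite dependence range), so nothing substantive is missing.
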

\begin{proof}
Fix $p$ such that $p_i > p > p_c(\mathbb{Z}^2)$ for $i = 2,3$.
As an application of equation (\ref{eq:cro_log_2}) (with $k$ playing the role of $n$) we can choose constants $c = c(p) > 0$ and $\delta = \delta(p) > 0$ such that
\[ \mathbb{P}_{p} \left( \mathcal{B} \left( \lceil c \log k \rceil, k \right) \right) \geq \delta \]
for all integer $k \geq 0$.

Let $X = \{ X(j,h) \}_{(j,h) \in \mathbb{Z}^2}$ be the process on $\{0,1\}^{\mathbb{Z}^2}$ given by
\[ X(j,h) = \mathbf{1}_{\left\{\tilde{\Gamma}_n\left( \lfloor \frac{j}{2} \rfloor, \lceil \frac{j}{2} \rceil, h \right) ~\text{is good} \right\} }\, .\]

The definition of a block to be good depends only on the values of $\omega''$ inside it and on the restriction of the $\omega_2$ and $\omega_3$ processes to the projections of this block and of its neighboring blocks into the planes $\mathcal{P}_2$ and $\mathcal{P}_3$.
Thus, under $\mathbb{P}_{p_2} \times \mathbb{P}_{p_3} \times \mathbb{P}_{p''}$, $X$ is a translation invariant two-dependent percolation process on $\mathbb{Z}^2$.
If follows from \cite[Theorem 0.0]{liggett97} that there exists $u \in (0,1)$ such that,
\begin{equation}
\label{eq:pat_blo_goo_2}
\begin{array}{c}
\text{if \,\, $\mathbb{P}_{p_2} \times \mathbb{P}_{p_3} \times \mathbb{P}_{p''} \left( \tilde{\Gamma}_n( 0,0,0 ) ~\text{is good} \right) \geq u$, then $X$ dominates} \\
\text{stochastically a $2$-dimensional Bernoulli site percolation with parameter $p$.}
\end{array}
\end{equation}
Since $p_2$ and $p_3$ are larger than $p_c(\mathbb{Z}^2)$, by equation \eqref{eq:cro_log_2} there exists $n = n(p_2,p_3)$ such that,
\[
\min_{i=2,3} \mathbb{P}_{p_i} \left( \mathcal{B}_i (n,2n) \right) \geq {u}^{1/8}.
\]
Having fixed $n$, define $p_{\bullet} : = u^{1/2n^3}$.
Then, whenever $p''\geq p_{\bullet}$ we verify that 
\[
\mathbb{P}_{p''}\left( \mathcal{C}(n;j,l,h) \right) = {(p'')}^{n^3} \geq \sqrt{u}.
\]

By the Harris-FKG inequality (see \cite{Harris60} or \cite{Grimmett99}) and symmetry,
\begin{equation*}
\begin{split}
& \mathbb{P}_{p_2} \times \mathbb{P}_{p_3} \times \mathbb{P}_{p''} \left( \tilde{\Gamma}_n(0,0,0) ~\text{is good} \right) \geq \\  
& \left[ \mathbb{P}_{p_2} \left( \mathcal{B}_2 (n,2n) \right)\right]^{2} \times \left[ \mathbb{P}_{p_3} \left( \mathcal{B}_3 (n,2n) \right) \right]^{2} \times \mathbb{P}_{p''} \left( \mathcal{C}(n;j,l,h) \right) \geq u.
\end{split}
\end{equation*}
which in view of (\ref{eq:pat_blo_goo_2}) gives
\[ 
\mathbb{P}_{p_2} \times \mathbb{P}_{p_3} \times \mathbb{P}_{p''} \left( \tilde{\mathcal{B}}_n (c \log k, k) \right) \geq \mathbb{P}_{p} \left( \mathcal{B} \left( \lceil c \log k \rceil, k \right) \right) \geq \delta.
\]
\end{proof}

\subsection{Constructing paths from projections}
\label{subsec:con_pat}

For this section we fix the constants $c > 0$, $\delta>0$, $p_\bullet = p_\bullet (p_2,p_3) \in (0,1)$ and a nonnegative integer $n = n(p_2,p_3)$ so that equation \eqref{eq:Pp2xPp3} holds (see Lemma \ref{lemma:pat_blo_goo}). 
Since $n$ is fixed it will be omitted in the subscripts in $\tilde{\Gamma}_n$, $\tilde{\mathcal{B}_n}$, $\tilde{R}_n$, $\Lambda_n$ and others. 

Lemma \ref{lemma:pat_blo_goo} assures that the probability of existence of paths of good blocks in the set $\tilde{R}(c \log k, k)$ is bounded from below uniformly as $k$ increases.
In order to easy the notation, let us not distinguish between $\tilde{R}(c \log k, k) \subset \Lambda$ and $\bigcup_{j,l,h} \tilde{\Gamma}(j,l,h) \subset \mathbb{Z}^3$ where the last union is taken over the set $\left\{(j,l,k); ~0\leq j \leq \lceil c\log k \rceil, ~0 \leq h \leq k, ~l=j ~\text{or}~ l=j+1 \right\}.$

The main goal of the present section is to show that, to each bottom-to-top crossing of good blocks in $\tilde{R}(c \log k, k)$ there corresponds a path $\gamma = \{v_0, v_1, \ldots, v_r \}$ of sites of $\mathbb{Z}^3$ having the following properties:
\begin{itemize}
\item $\gamma$ is contained in $\tilde{R}(c \log k, k)$.
\item $v_0 \in \{(x,y,z) \in \tilde{R}(c \log k, k); z = 0 \}$ and $v_r \in \{(x,y,z) \in \tilde{R}(c \log k, k); z = (k-1)n \}$, \emph{i.e.}~$\gamma$ crosses $\tilde{R}(c \log k, k)$ from bottom to top.
\item $\omega_i (v_j) = 1$ and $\omega''(v_j)=1$ for all $i = 2$ and $j \in \{0, \ldots, r\}$.
\end{itemize}

In order to prove the existence of such a path we present, as a lemma, a procedure that enables us to find paths in $\mathbb{Z}^3$ having their projections into $\mathcal{P}_2$ and $\mathcal{P}_3$ contained in some crossings of rectangles lying in these coordinate planes.
We begin introducing some more notation.

For a site $v = (x,y,z) \in \mathbb{Z}^3$ we define $h(z) = z$ the value of its third coordinate that we call the \emph{height} of $v$.
For two sites $v = (x,0,z) \in \mathcal{P}_2$ and $w = (0,y,z) \in \mathcal{P}_3$ with $h(v)= h(w) = z$ we define
\begin{equation}
 \label{eq:prod_site}
v \times w = (x,0,z) \times (0,y,z) = (x,y,z)
\end{equation}
In words, $v \times w$ is the unique site having $v$ and $w$ as projections onto $\mathcal{P}_2$ and $\mathcal{P}_3$ respectively.
Let
\begin{equation}
 \label{eq:path_gamm}
\gamma = \{ v_0, v_1, \ldots, v_m\} \subset \mathbb{Z}^3
\end{equation}
be a path.
We define its \emph{height variation} as being
\begin{equation}
 \label{eq:vari_path}
h(\gamma) = h(v_m) - h(v_0).
\end{equation}
For any $0 \leq k \leq m$ we denote
\begin{equation}
 \label{eq:path_stop}
\gamma^{(k)} = \{v_0, \ldots, v_k \}
\end{equation}
the path $\gamma$ stopped at its $k$-th step.
Denoting for any $k \in \mathbb{Z}$
\begin{equation}
 \label{eq:hitt_time}
\tau_k = \tau_k (\gamma) =  \inf \{j \geq 0; ~ h(v_j) = k \}
\end{equation}
we can define the path stopped at the first time it hits height $k$ by:
\begin{equation}
 \label{eq:path_mini_k}
 \gamma \wedge k = \gamma^{(\tau_k)}.
\end{equation}
In case the infimum in equation (\ref{eq:hitt_time}) is taken over an empty set we define $\gamma \wedge k = \gamma$.
For a path $\gamma$ as in (\ref{eq:path_gamm}) we define its \emph{reversal} as being the path
\begin{equation}
 \label{eq:path_reve}
\overline{\gamma} = \{v_m, v_{m-1}, \ldots, v_0\}.
\end{equation}
If we now take $\gamma$ as in (\ref{eq:path_gamm}) and another path 
\begin{equation}
\label{eq:path_gamm_prim}
\gamma'=\{w_0, \ldots, w_{m'} \}
\end{equation}
having $w_0 = v_m$ we define their \emph{concatenation} by:
\begin{equation}
 \label{eq:conc_path}
\gamma * \gamma' = \{v_0, \ldots, v_m, w_1, \ldots, w_{m'} \}.
\end{equation}
\begin{remark}
The concatenation of two paths does not need to be a self-avoiding path.
From now on we will relax the definition of a path and also refer to concatenations as paths.
Noting however that, given a path that is not self-avoiding, it is always possible to extract a self-avoiding path contained in it and having the same starting and end vertices, the reader should be convinced that this slight sloppiness in the writing does not affect the results below.
\end{remark}
Now, if $\gamma$ and $\gamma'$ given by (\ref{eq:path_gamm}) and (\ref{eq:path_gamm_prim}) intersect themselves however not necessarily in their endpoints we define their \emph{juxtaposition} $\gamma \circ \gamma'$ as being the unique path described as follows:
It starts at $v_0$, goes along $\gamma$ until it first hits $\gamma'2$ and then, from that point on, it goes along $\gamma'$ until its final point $w_{m'}$.

We say that two paths $\gamma$ and $\gamma'$ given as in (\ref{eq:path_gamm}) and (\ref{eq:path_gamm_prim}) given are \emph{compatible} if
\begin{equation}
 \label{eq:comp_1}
\gamma \subset \mathcal{P}_2 \text{ and } \gamma' \subset \mathcal{P}_3
\end{equation}
\begin{equation}
 \label{eq:comp_2}
 h(v_0) = h(w_0) \text{ and } h(v_m) = h(w_{m'})
\end{equation}
\begin{equation}
 \label{eq:comp_3}
\begin{split}
 h(\gamma^{(k)}) h(\gamma) \geq 0 \text{ for all } 0 \leq k \leq m \text{ and } 
 h(\gamma'^{(l)}) h(\gamma') \geq 0 \text{ for all } 0 \leq l \leq m'
\end{split}
\end{equation}
\begin{equation}
 \label{eq:comp_4}
\tau_h(\gamma) = m \text{ and } \tau_h (\gamma') = m' \text{, where } h = h(\gamma) = h(\gamma').
\end{equation}
Condition (\ref{eq:comp_2}) states that both $\gamma$ and $\gamma'$ start and finish at the same height, and it implies that $h(\gamma) = h(\gamma')$.
Condition (\ref{eq:comp_3}) requires that as one moves forward along the paths, the variation does not change signs, meaning that the paths will always lie above or beneath their initial height.
Finally, condition (\ref{eq:comp_4}) guarantees that $\gamma$ and $\gamma'$ first hit the final height at their respective end points $v_m$ and $w_{m'}$.

The point in defining the notion of compatible paths is that whenever $\gamma$ and $\gamma'$ are compatible it is possible to find a path in $\mathbb{Z}^3$ having $\gamma$ and $\gamma'$ as its projections into $\mathcal{P}_2$ and $\mathcal{P}_3$ respectively. This is the content of the following lemma:

\begin{lemma}
\label{lemma:geo}
Let $\gamma$ and $\gamma'$ be given as in \eqref{eq:path_gamm} and \eqref{eq:path_gamm_prim} be two compatible paths.
There is a path $\gamma \times \gamma' \subset \mathbb{Z}^3$ starting at $v_0 \times w_0$ and ending at $v_m \times w_{m'}$ satisfying:
\begin{equation}
\label{eq:proj_gamm_gamm_prim}
\pi_2(\gamma' \times \gamma) = \gamma \,\,\text{ and } \,\,\pi_3(\gamma \times \gamma') = \gamma'.
\end{equation}
\end{lemma}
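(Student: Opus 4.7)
The plan is to build the path $\gamma \times \gamma' \subset \mathbb{Z}^3$ inductively, interleaving the moves of $\gamma$ and $\gamma'$. I will maintain two pointers $i \in \{0, \ldots, m\}$ and $j \in \{0, \ldots, m'\}$ (initialized at zero) and preserve the invariant $h(v_i) = h(w_j)$, so that the current three-dimensional site $v_i \times w_j$ is always well defined. At each step of the procedure, the choice between three moves depends on the nature of the next edges $v_i \to v_{i+1}$ and $w_j \to w_{j+1}$: if the next step of $\gamma$ is horizontal (i.e.\ a $\pm e_1$-move, preserving height), I take the corresponding $\pm e_1$-step in $\mathbb{Z}^3$ from $v_i \times w_j$ to $v_{i+1} \times w_j$ and advance $i$; otherwise, if the next step of $\gamma'$ is horizontal (a $\pm e_2$-move), I take the $\pm e_2$-step in $\mathbb{Z}^3$ and advance $j$; otherwise both next steps are vertical, in which case, as explained below, they must agree in direction, so I can take a synchronized $\pm e_3$-step in $\mathbb{Z}^3$ and advance both $i$ and $j$ at once.

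The verification breaks into three points. First, the invariant $h(v_i) = h(w_j)$ is preserved, since horizontal moves of either path do not alter heights, while the synchronized vertical moves alter both heights by the same amount. Second, both pointers terminate at their endpoints simultaneously: once $i$ reaches $m$ the invariant gives $h(w_j) = h(v_m) = h(w_{m'})$, and condition \eqref{eq:comp_4} applied to $\gamma'$ forces $j = m'$; a symmetric argument handles $j$ reaching $m'$ first. Third, the projection identities $\pi_2(\gamma \times \gamma') = \gamma$ and $\pi_3(\gamma \times \gamma') = \gamma'$ follow from the construction, because each $\pm e_1$-move in $\mathbb{Z}^3$ projects to a step of $\gamma$ while leaving $\pi_3$ unchanged, each $\pm e_2$-move projects to a step of $\gamma'$ while leaving $\pi_2$ unchanged, and each $\pm e_3$-move projects simultaneously to a step in $\gamma$ and a step in $\gamma'$.

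The main obstacle is justifying that whenever both of the next steps are vertical they must point in the same direction, so that a synchronized $\pm e_3$-move is actually available. This is precisely where the conditions \eqref{eq:comp_3} and \eqref{eq:comp_4} enter together: condition \eqref{eq:comp_3} prevents either path from crossing back past its initial height, while condition \eqref{eq:comp_4} forbids the paths from reaching their common terminal height before the very last step. Combined, they force every vertical step of $\gamma$ and of $\gamma'$ to agree in sign with the common total height variation $h(\gamma) = h(\gamma')$, so that the greedy construction never stalls. Once this monotonicity in the third coordinate is in hand, the remainder of the argument reduces to a routine induction on $m + m'$, producing the desired $\gamma \times \gamma'$ as the concatenation of the chosen unit steps.
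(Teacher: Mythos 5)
Your construction has a genuine gap at exactly the point you identify as "the main obstacle." You claim that \eqref{eq:comp_3} and \eqref{eq:comp_4} force every vertical step of $\gamma$ and of $\gamma'$ to agree in sign with the total height variation, i.e.\ that compatible paths are monotone in the third coordinate. This is false: \eqref{eq:comp_3} only prevents a path from crossing back past its \emph{initial} height, and \eqref{eq:comp_4} only says the \emph{final} height is first reached at the last step; in between, the path may oscillate freely. Concretely, take $\gamma$ with height sequence $0,1,0,1,2$ and $\gamma'$ with height sequence $0,1,2$; both pairs of conditions hold, but after the first synchronized up-step your greedy procedure reaches a state where the next step of $\gamma$ is down while the next step of $\gamma'$ is up, and no legal move remains: a horizontal move is unavailable, a one-sided vertical move breaks the invariant $h(v_i)=h(w_j)$ (without which $v_i\times w_j$ is undefined and the projections leave the prescribed paths), and a synchronized move is impossible. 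So the forward-only interleaving deadlocks, and the induction on $m+m'$ you invoke never gets off the ground in such cases. This is not a corner case you can exclude: in the application (Lemma \ref{lemma:geo_2}) the compatible paths are percolation crossings of rectangles, which are generically non-monotone in height.

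The paper's proof is built precisely to cope with this. It argues by induction on the height variation $h$, decomposing each path into excursions between height $0$ and height $h-1$, and it allows the projections to \emph{retrace} portions of the paths: while $\gamma$ performs one of its excursions, the $\mathcal{P}_3$-projection runs back and forth along a fixed monotone end-piece $\eta'$ of $\gamma'$ (and symmetrically with the roles exchanged), the pieces being glued by concatenation and reversal. Your approach could be repaired along the same lines — when the two paths want to move vertically in opposite directions, let one projection backtrack over already-traversed sites while the other advances — but that is no longer the simple synchronized greedy scheme, and the bookkeeping you would need (which sites may be retraced, and why the set-level identities $\pi_2(\gamma\times\gamma')=\gamma$, $\pi_3(\gamma\times\gamma')=\gamma'$ survive the retracing) is essentially the content of the paper's excursion argument. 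As written, your proof establishes the lemma only for pairs of paths that are monotone in height, which is strictly weaker than the statement and insufficient for its use later in the paper.
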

Although some readers may find this result extremely intuitive, we give a proof to it at the Appendix \ref{lemma:geo}.

\begin{remark}
There can be more than one path connecting $v_0 \times w_0$ and $v_m \times v_m$ and satisfying \eqref{eq:proj_gamm_gamm_prim}.
So whenever we write $\gamma \times \gamma'$ we are referring to one of those paths arbitrarily selected.
\end{remark}

When $\mathcal{A}_i (2n, n; j,h) \cap \mathcal{B}_i (n, 2n; j,h)$ occurs we denote by $\xi_i(j,h)$ a bottom-to-top $\omega_i$-open crossing in $R_i(n,2n;j,h)$ arbitrarily selected among the possible choices (for instance left-most among such crossings).
We also denote by $\zeta_i(j,h)$ a left-to-right $\omega_i$-open crossing in $R_i(2n,n;j,h)$ also arbitrarily selected (for instance the lowest one).
We write $\left( \xi_i(j,h) \right)_0$ and $\left(\zeta_i(j,h)\right)_0$ in order to refer to the starting points of those paths.

Now suppose that $\tilde{\Gamma}(j,l,h)$ and $\tilde{\Gamma}(j',l',h')$ are good blocks that are neighbors in the graph $\Lambda$. 
We will use Lemma \ref{lemma:geo} in order to construct paths contained in the union of these blocks, joining  $\left(\xi_2(j,h)\right)_0 \times \left(\xi_3(l,h)\right)_0$ to $\left(\xi_2(j',h')\right)_0 \times \left(\xi_3(l',h')\right)_0$ and, in addition, having their projections into $\mathcal{P}_i$ (for $i = 2,3$) contained in the union of the corresponding $\xi_i$, $\zeta_i$ paths.

\begin{lemma}
\label{lemma:geo_2}
Assume that $\tilde{\Gamma}(j,l,h)$ and $\tilde{\Gamma}(j',l',h')$ are good neighboring blocks in $\Lambda$.
Then there is a path $\gamma = \{v_0, v_1, \ldots, v_m \} \subset \mathbb{Z}^3$ satisfying:
\begin{enumerate}
 \item  $\gamma \subset \tilde{\Gamma}(j,l,h) \cup \tilde{\Gamma}(j',l',h')$;
 \item  $v_0 = \left(\xi_2(j,h)\right)_0 \times \left( \xi_3(l,h) \right)_0$ and $v_m = \left(\xi_2(j',h')\right)_0 \times \left( \xi_3(l',h')\right)_0$;
 \item  $\pi_2 (\gamma) \subset \zeta_2(j,h) \cup \xi_2(j,h) \cup \zeta_2(j', h') \cup \xi_2(j',h')$;
 \item  $\pi_3 (\gamma) \subset \zeta_3(l,h) \cup \xi_3(l,h) \cup \zeta_3(l', h') \cup \xi_3(l',h')$.
\end{enumerate}
In particular, any site $v \in \gamma$ has $\omega''(v)=1$ and, in addition, as a consequence of the items $3$ and $4$ we have $\pi_2(v)$ is $\omega_2$-open and $\pi_3(v)$ is $\omega_3$-open.
\end{lemma}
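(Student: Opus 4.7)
The plan is to proceed by case analysis on which of the three coordinates differs between the two neighboring blocks $\tilde{\Gamma}(j,l,h)$ and $\tilde{\Gamma}(j',l',h')$. Since they are neighbors in $\Lambda$, exactly one of $|j'-j|, |l'-l|, |h'-h|$ equals $1$. In each case, I will construct, using only the $\xi_i$ and $\zeta_i$ crossings supplied by the two good blocks, a pair of compatible $2$D paths $\gamma_2 \subset \mathcal{P}_2$ and $\gamma_3 \subset \mathcal{P}_3$ joining the appropriate projections of $v_0$ and $v_m$, and then lift them via Lemma \ref{lemma:geo} to the desired $3$D path $\gamma = \gamma_2 \times \gamma_3$.

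The planar ingredient I rely on is that a bottom-to-top crossing of a rectangle $R$ must meet any left-to-right crossing of $R$. When the blocks differ only in $j$ (resp.\ $l$), the $\mathcal{P}_3$- (resp.\ $\mathcal{P}_2$-)projections of the two blocks coincide, so the projected path in that plane can be taken to be the trivial single-vertex path at $(\xi_3(l,h))_0$ (resp.\ $(\xi_2(j,h))_0$); in the other plane, $\gamma_2$ (or $\gamma_3$) is built by following $\xi_2(j,h)$ until it meets $\zeta_2(j,h)$ inside $R_2(n;jn,hn)$, switching onto $\zeta_2(j,h)$ until it meets $\xi_2(j+1,h)$ inside $R_2(n;(j+1)n,hn)$ (this intersection is guaranteed because $\zeta_2(j,h)$ is a left-to-right crossing of the $2n \times n$ strip that contains both vertical crossings), and finally descending along $\xi_2(j+1,h)$ to its bottom endpoint $(\xi_2(j+1,h))_0$. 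When the blocks differ in $h$, both planes require an analogous bridging: follow $\xi_i$ of the lower block upward, transition through the horizontal $\zeta_i$ crossing belonging to the upper block (which, by planar duality, meets both $\xi_i$'s inside the shared $n \times n$ rectangle at the interface), and then descend along the upper $\xi_i$ to its bottom endpoint.

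The main obstacle, and the step requiring the most care, is verifying the compatibility conditions \eqref{eq:comp_2}--\eqref{eq:comp_4}. Equality of starting and ending heights follows because each $(\xi_i(\cdot,h))_0$ lies on the bottom edge of its rectangle, at absolute height $hn$. The sign-consistency of height variation \eqref{eq:comp_3} is secured because the constructed paths are confined to the vertical strip $[hn,(h+2)n-1]$ (horizontal cases) or $[hn,(h+2)n-1]$ for the lower portion and $[(h+1)n,(h+3)n-1]$ for the upper portion of $\gamma_i$ (vertical case), so the height either stays at $hn$ or strictly increases relative to it. For the first-hit condition \eqref{eq:comp_4} I would apply the stopping procedure \eqref{eq:path_mini_k} to prune any initial subsegment that touches the terminal height before the intended endpoint, and in the horizontal cases the partner path is a single vertex so the condition is handled trivially. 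Once compatibility is established, Lemma \ref{lemma:geo} produces $\gamma = \gamma_2 \times \gamma_3$ and the four stated conclusions follow directly: containment in $\tilde{\Gamma}(j,l,h) \cup \tilde{\Gamma}(j',l',h')$ holds because the projections of $\gamma$ lie in the projections of this union; the endpoint identities and the projection inclusions are built into the construction; and for the final observation, $\omega''(v)=1$ for every $v \in \gamma$ because $v$ belongs to a good block (so the event $\mathcal{C}(n;\cdot,\cdot,\cdot)$ holds there), while $\omega_i(\pi_i(v))=1$ because $\pi_i(v) \in \xi_i \cup \zeta_i$, which are $\omega_i$-open crossings by definition of goodness.
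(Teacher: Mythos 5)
There is a genuine gap in your treatment of the horizontal cases ($j'-j=\pm 1$, and likewise $l'-l=\pm 1$). You propose to take the $\mathcal{P}_3$-projection of the lifted path to be the single vertex $\left(\xi_3(l,h)\right)_0$. This is geometrically impossible: since $\mathcal{P}_2=\{(x,0,z)\}$ and $\mathcal{P}_3=\{(0,y,z)\}$ both retain the third coordinate, the height sequences of $\pi_2(\gamma)$ and $\pi_3(\gamma)$ coincide step by step with that of $\gamma$. If $\pi_3(\gamma)$ were a single vertex, $\gamma$ would have constant height, and then $\pi_2(\gamma)$ would be confined to one horizontal line in $\mathcal{P}_2$; but the route $\xi_2(j,h)\to\zeta_2(j,h)\to\xi_2(j+1,h)$ in general leaves the bottom row, since the meeting points of these crossings are random sites in the interior of the rectangles. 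Moreover, pairing a single vertex (zero height variation) with a path of nonzero height variation violates \eqref{eq:comp_2}, so Lemma \ref{lemma:geo} cannot even be invoked. The same defect appears in your proposed repair of the first-hit condition: a path that goes up and then comes back down to the terminal height cannot be made compatible by applying the stopping operation \eqref{eq:path_mini_k}, because truncating at the first visit to the terminal height cuts the path in the middle of $\xi_2(j,h)$ and loses the required endpoint $\left(\xi_2(j',h')\right)_0$. This also affects your sketch of the vertical case $h'-h=\pm1$, where the single up-and-over-and-down path you describe hits the terminal height $(h+1)n$ long before its endpoint, so \eqref{eq:comp_4} fails and no single application of Lemma \ref{lemma:geo} lifts it.

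The paper's proof resolves exactly this point by a two-stage lift. In the case $j'-j=1$ it pairs $\alpha_2=\xi_2(j,h)\circ\zeta_2(j,h)\circ(\xi_2(j+1,h)\wedge n)$ with the \emph{nontrivial} partner $\alpha_3=\xi_3(l,h)\wedge n$ (both monotone-in-sign, both first hitting height $n$ at their endpoints), lifts this compatible pair, and then lifts a second, descending pair $\beta_2=\overline{\xi_2(j+1,h)\wedge n}$, $\beta_3=\overline{\xi_3(l,h)\wedge n}$ back down to the bottom endpoints, concatenating the two lifted pieces. Similarly, in the case $h'-h=1$ it first lifts the full upward crossings $\xi_2\times\xi_3$ and only then lifts the descending bridge $\beta_2\times\beta_3$ through $\zeta_i(\cdot,h+1)$. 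So the essential missing idea in your proposal is that the connecting route must be decomposed into height-monotone stages, each stage paired in $\mathcal{P}_2$ and $\mathcal{P}_3$ with a partner of identical start and end heights, lifted separately by Lemma \ref{lemma:geo}, and then concatenated; your intersection claims (a bottom-to-top crossing meets a left-to-right crossing of the same rectangle) and your final observation about $\omega''$, $\omega_2$, $\omega_3$-openness are fine, but the lifting step as you set it up does not go through.
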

\begin{proof}
Since $\tilde{\Gamma}(j,l,h)$ and $\tilde{\Gamma}(j',l',h')$ are neighboring boxes we have that $|j'-j| + |l - l'| + |h - h'| = 1$. 
We will split the proof into six cases (each one corresponding one of the indices changing $\pm 1$ units) and use Lemma \ref{lemma:geo} in each of those cases. 
We only prove the cases $h'-h = \pm 1$ and $j'- j = \pm 1$, the remaining cases $l'-l = \pm 1$ are analogous.

 \begin{enumerate}

\item $h' - h$ = 1 (the basic strategy is depicted in Figure \ref{fig:const_path_1})

For convenience let us fix $h=0$, $j = j' = 0$ and $ l = l' = 0$. Since $\xi_2(0,0)$ and $\xi_3(0,0)$ are respectively bottom-to-top crossings of $R_2(n,2n)$ and $R_3(n,2n)$ they are compatible. 
By Lemma \ref{lemma:geo} we can pick a path $\xi = \xi_2(0,0) \times \xi_3(0,0) \subset \mathbb{Z}^3$ starting at $\left(\xi_2(0,0)\right)_0 \times \left(\xi_3(0,0)\right)_0$  and having $\pi_i (\xi) = \xi_i (0,0)$ for $i = 2,3$. 
In particular $\xi$ is contained in $\tilde{\Gamma}(0,0,0) \cup \tilde{\Gamma} (0,0,1)$.

Now, for $i = 2,3$ let $\beta_i (0,1) = \overline{\xi_i(0,0)} \circ \overleftrightarrow{\zeta_i (0,1)} \circ \overline{\xi_i(0,1)}$ be the path in $\mathcal{P}_i$ defined the following way: First start at the final point of $\xi_i(0,0)$, then go down along its reversal $\overline{\xi_i(0,0)}$ until first hitting $\zeta_i (0,1)$. 
After hitting $\zeta_i (0,1)$ go along this path in the appropriate sense in order to hit the path $\xi_i(0,1)$. 
Note that either $\zeta_i(0,1)$ or its reversal should be taken in order to hit $\xi_i(0,1)$. 
Finally, after hitting $\xi_i(0,1)$ take its reversal $\xi_i(0,1)$ until getting to its starting point $\left( \xi_i(0,1) \right)_0$.
\begin{remark}
The arrow is placed on the top of $\zeta_i (0,1)$ in order to indicate that one should goes along either $\zeta_i (0,1)$ or $\overline{\zeta_i (0,1)}$ depending on which one of these paths will lead to $\xi_i(0,1)$.
We prefer not to give a formal definition and trust that this description is enough for making the construction clear.
\end{remark}

Note that $\beta_2 (0,1)$ and $\beta_3 (0,1)$ are top-to-bottom crossings of $R_2(n;0,1)$ and $R_3(n;0,1)$ then they are compatible paths and by Lemma \ref{lemma:geo} we can pick a path $\beta = \beta_2(0,1) \times \beta_3(0,1) \subset \tilde{\Gamma}(0,1)$ connecting the ending point of $\xi$ to the site $\left( \xi_2(0,1) \right)_0 \times \left( \xi_3(0,1) \right)_0$ and having $\pi_i(\beta) \subset \xi_i (0,0) \cup \zeta_i (0,1) \cup \xi_i(0,1)$ for $i=2,3$.

Let us define $\gamma = \xi * \beta$. Then this path starts at $\left( \xi_2(0,0) \right)_0 \times \left( \xi_3(0,0) \right)_0$ finishes at $\left(\xi_2(0,1) \right)_0 \times \left(\xi_3(0,1)\right)_0$. 
The properties $1$, $3$ and $4$ in the statement are satisfied since they hold for both $\xi$ and $\beta$.

\begin{figure}[htb]
\centering
\includegraphics[width=0.6\textwidth]{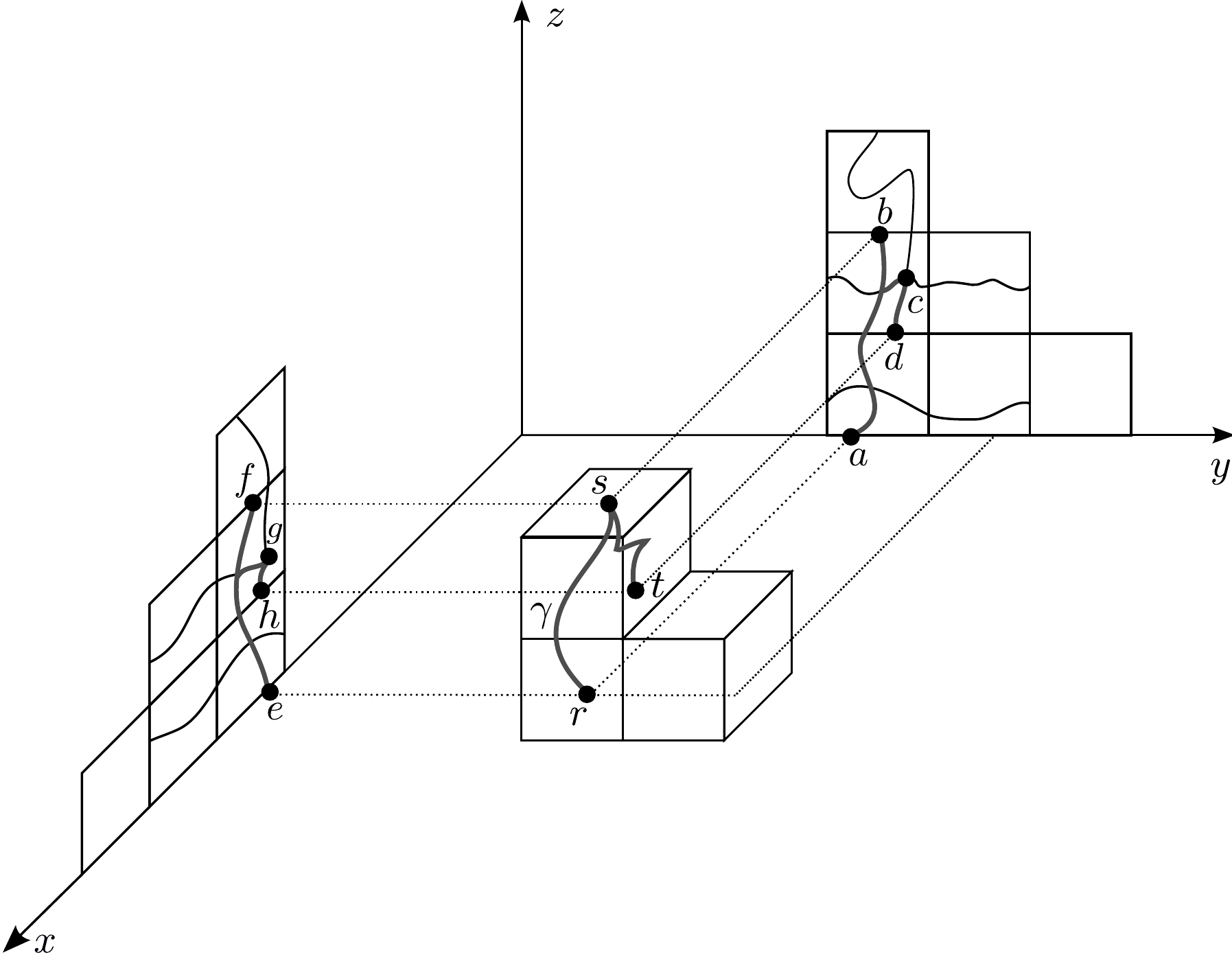}
\caption[Open paths in neighboring good blocks, case $h'-h =1$]{The path $\xi_2(0,0)$ joins $a$ to $b$. The path $\xi_3(0,0)$ joins $e$ to $f$. 
The path $\beta_2(1,0)$ starts at $b$, passes through $c$ and ends at $d$.
The path $\beta_3(1,0)$ starts at $f$, passes through $g$ and ends at $h$. 
The path $\gamma = (\xi_2 \times \xi_3) * (\beta_2 \times \beta_3) $ starts at $r$, passes through $s$ and ends at $t$.
It is contained in $\tilde{\Gamma}(j,l,0) \cup \tilde{\Gamma}(j,l,1)$.}
\label{fig:const_path_1}
\end{figure}
\item $h' - h = - 1$.

By the previous case if we interchange the roles of $h$ and $h'$ we can pick a path satisfying the properties $1$, $3$ and $4$, however starting at $\left(\xi_2(j',h')\right)_0 \times \left( \xi_3(l',h')\right)_0$ and finishing at $\left(\xi_2(j,h)\right)_0 \times \left( \xi_3(l,h) \right)_0$.
The reversal of this path satisfy all the required properties.

\item $j'-j = 1$ (the basic strategy is depicted in Figure \ref{fig:const_path_2})

In order to simplify the notation we fix $j=0$.
Let $\alpha_2 = \xi_2(0,0) \circ \zeta_2(0,0) \circ (\xi_2 (1,0) \wedge n) \subset \mathcal{P}_2$ be the following path: Start at $\left( \xi_2(0,0) \right)_0$ and go along $\xi_2(0,0)$ until it hits $\zeta_2(0,0)$. 
After hitting $\zeta_2(0,0)$ go along this path until hitting $\xi_2 (1,0)$. 
Finally go along $\xi_2(1,0)$ up to height $n$. Note that $\alpha_2$ is bottom-to-top crossing of the rectangle $R_2(2n,n;0,0)$.

Define now $\alpha_3 = \xi_3(0,0) \wedge n$. 
Then $\alpha_3$ is the bottom-to-top crossing of the rectangle $R_3(n;0,0)$ that starts at $\left(\xi_3(0,0)\right)_0$, goes along $\xi_3(0,0)$ up to the time it first hits height $n$. 

Since $\alpha_2$ and $\alpha_3$ are crossings of blocks with same hight they are compatible.
We can apply Lemma \ref{lemma:geo} in order to pick a path $\alpha = \alpha_2 \times \alpha_3$ that starts at $\left(\xi_2(0,0) \right)_0 \times \left(\xi_3(0,0) \right)_0$, goes up to height $n$ and that has projections $\pi_2 (\alpha) = \alpha_2 \subset \xi_2(0,0) \cup \zeta_2(0,0) \cup \xi_2(1,0)$ and $\pi_3 (\alpha) = \alpha_3 \subset \xi_3(0,0)$. 
In particular $\alpha$ is contained in $\tilde{\Gamma}(0,0,0) \cup \tilde{\Gamma}(1,0,0)$.

Let us now define $\beta_3 = \overline{\xi_3 (0,0) \wedge n}$ which is the path starting at the ending point of $\xi_3 (0,0) \wedge n$ and going along its reversed path $\overline{\xi_3(0,0)}$ until it hits $(\xi_3(0,0))_0$.
We also define $\beta_2 = \overline{\xi_2(1,0) \wedge n}$ to be the top-to-bottom crossing of $R_2(n,n;1,0)$ that starts at the ending point of $\xi_2 (0,1) \wedge n$ and goes along $\overline{\xi_2(0,1)}$ all the way down to the site $\left(\xi_2(0,1)\right)_0$. 

Note that $\beta_2$ and $\beta_3$ are compatible and so, once more, Lemma \ref{lemma:geo} enables us to select a path $\beta = \beta_2 \times \beta_3$ connecting the ending point of $\alpha$ to the site $\left( \xi_2(1,0)\right)_0 \times \left(\xi_3 (1,0) \right)_0$ and having $\pi_2(\beta) = \beta_2 \subset \xi_2(1,0)$ and $\pi_3(\beta) = \beta_3 \subset \xi_3(0,0) \cup \zeta_3 (0,0) \cup \xi_3(1,0)$. In particular $\beta \subset \tilde{\Gamma}(0,0,0) \cup \tilde{\Gamma}(1,0,0)$.

Then the concatenation $\gamma = \alpha * \beta$ is a path satisfying the properties $1$ to $4$ above.

\item $j'- j = -1$

Interchange the roles of $j$ and $j'$, use the previous case and then reverse the obtained path.
\end{enumerate}
\end{proof}

\begin{figure}[htb]
\centering
\includegraphics[width=0.6\textwidth]{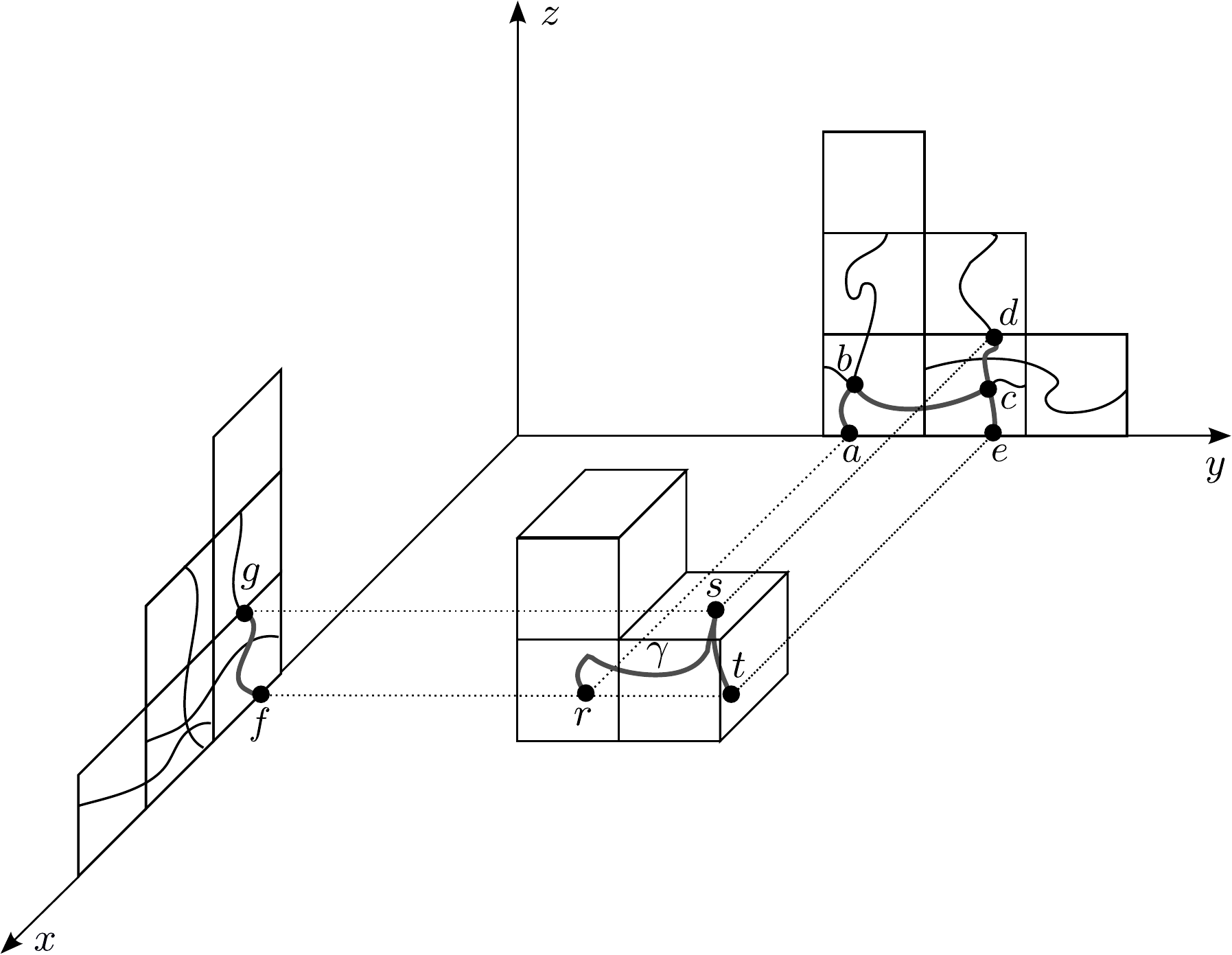}
\caption[Open paths in neighboring good blocks, case $j'-j =1$]{The path $\alpha_2$ starts at $a$, passes through $b$ and $c$, and ends at $d$. 
The path $\alpha_3$ starts at $f$ and ends at $g$. 
The path $\beta_3$ is simply its reversal, starting at $g$ and ending at $f$.
$\beta_2$ joins $d$ to $e$. 
The path $\gamma = (\alpha_2 \times \alpha_3)*(\beta_2 \times \beta_3)$ starts at $r$, passes through $s$ and ends at $t$. 
It is contained in $\tilde{\Gamma}(0,l,h) \cup \tilde{\Gamma}(1,l,h)$.}
\label{fig:const_path_2}
\end{figure}

\begin{lemma}
\label{cor:geo_2_2}
If $\tilde{\mathcal{B}}(c \log k, k)$ occurs and all sites in $\pi_1 \left( \tilde{R} (c \log k, k) \right)$ are $\omega_1$-open then there is a $\omega$-open path $\gamma = \{v_0, v_i, \ldots , v_r \} \subset \tilde{R}(c \log k, k)$ such that $h(v_0) = 0$ and $h(v_m) = (k-1)n$.
\begin{proof}
Recall that $\tilde{\mathcal{B}}(c \log k, k)$ is the event that there is a path 
\[ \tilde{\gamma} = \left\{ \tilde{\Gamma}(j_0, l_0 ,h_0),\ldots, \tilde{\Gamma}(j_m,l_m,h_m) \right\} \subset \Lambda \] crossing $R(c \log k, k)$.
We apply Lemma \ref{lemma:geo_2} to each pair $\tilde{\Gamma}(j_i,l_i,h_i)$ and $\tilde{\Gamma}(j_{i+1},l_{i+1},h_{i+1})$ that are neighboring blocks in $\tilde{\gamma}$ in order to find paths $\gamma_i$ contained in $\tilde{\Gamma}(j_i,l_i,h_i) \cup \tilde{\Gamma}(j'_{i+1},l'_{i+1},h'_{i+1})$; starting at $\left(\xi_2(j_i,h_i)\right)_0 \times \left( \xi_3(l_i,h_i) \right)_0$ and finishing at $\left(\xi_2(j_{i+1},h_{i+1})\right)_0 \times \left( \xi_3(l_{i+1},h_{i+1})\right)_0$ that have the projections into $\mathcal{P}_2$ and $\mathcal{P}_3$ being $\omega_2$ and $\omega_3$-open respectively.
Since the ending point of each of the $\gamma_i$ is the starting point of each of $\gamma_i$ we can concatenate them all obtaining a path $\gamma = \gamma_0 * \cdots * \gamma_m$ satisfying that:
\begin{enumerate}
 \item $\gamma$ is contained in $\bigcup_{i=0}^{m} \tilde{\Gamma}(j_i, l_i, h_i)$;
 \item $h(v_0) = n h_0 = 0$ and $h(v_r) = n h_m = (k-1)n$;
 \item all sites in $\pi_2(\gamma)$ and $\pi_3(\gamma)$ are $\omega_2$ and $\omega_3$-open respectively.
 \item all the sites in $\gamma$ are $\omega''$-open.
 \end{enumerate}

By the condition $1$) the fact that $\tilde{\gamma}$ is a crossing of $\tilde{\mathcal{B}}(c\log k, k)$ implies that $\gamma$ is contained in $\tilde{R} (c \log k, k)$, and since all sites in $\pi_1 \left( \tilde{R} (c \log k, k) \right)$ are $\omega_1$-open the projection of $\gamma$ into $\mathcal{P}_1$ is also composed of $\omega_1$-open sites.
In view of $3$) and $4$), we have that $\gamma$ is $\omega$-open which, combined with $2$) finishes the proof.
\end{proof}
\end{lemma}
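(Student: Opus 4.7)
The plan is to assemble the desired path by concatenating the local paths produced by Lemma \ref{lemma:geo_2} along the crossing of good blocks guaranteed by $\tilde{\mathcal{B}}(c\log k, k)$. Concretely, on this event I can pick a sequence of good blocks
\[
\tilde{\gamma} = \bigl\{\tilde{\Gamma}(j_0,l_0,h_0),\ldots,\tilde{\Gamma}(j_m,l_m,h_m)\bigr\}
\]
forming a nearest-neighbor path in $\Lambda$ inside $\tilde{R}(c\log k,k)$, with $h_0=0$ and $h_m=k-1$. For each consecutive pair $\tilde{\Gamma}(j_i,l_i,h_i)$, $\tilde{\Gamma}(j_{i+1},l_{i+1},h_{i+1})$, Lemma \ref{lemma:geo_2} produces a path $\gamma_i\subset\mathbb{Z}^3$ joining the anchor point $(\xi_2(j_i,h_i))_0\times(\xi_3(l_i,h_i))_0$ to the anchor point $(\xi_2(j_{i+1},h_{i+1}))_0\times(\xi_3(l_{i+1},h_{i+1}))_0$, lying inside the union of the two blocks, and whose projections onto $\mathcal{P}_2$ and $\mathcal{P}_3$ are contained in the corresponding $\xi_i\cup\zeta_i$ crossings (hence $\omega_2$- and $\omega_3$-open).

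Because the endpoint of $\gamma_i$ equals the starting point of $\gamma_{i+1}$, I can form the concatenation $\gamma=\gamma_0\ast\gamma_1\ast\cdots\ast\gamma_{m-1}$; if it fails to be self-avoiding I pass to a self-avoiding sub-path with the same endpoints, as permitted by the remark following \eqref{eq:conc_path}. By construction $\gamma\subset\bigcup_{i=0}^m\tilde{\Gamma}(j_i,l_i,h_i)\subset\tilde{R}(c\log k,k)$, and the endpoints lie at heights $0$ and $(k-1)n$.

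It remains to verify that every site $v\in\gamma$ is $\omega$-open, i.e.\ that $\omega'(v)\omega''(v)=1$. Since each $\tilde{\Gamma}(j_i,l_i,h_i)$ is good, the event $\mathcal{C}(n;j_i,l_i,h_i)$ holds on every block, so $\omega''(v)=1$ for all $v\in\gamma$. For the factor $\omega'(v)=\omega_1(\pi_1(v))\omega_2(\pi_2(v))\omega_3(\pi_3(v))$: the projections $\pi_2(v)$ and $\pi_3(v)$ lie on the $\omega_2$- and $\omega_3$-open crossings $\xi_i,\zeta_i$ by items (3)--(4) of Lemma \ref{lemma:geo_2}, and $\pi_1(v)$ lies in $\pi_1(\tilde{R}(c\log k,k))$, where every site is $\omega_1$-open by hypothesis. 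Thus $\omega(v)=1$ for all $v\in\gamma$, completing the proof.

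The only delicate point I foresee is bookkeeping at the junctions between successive $\gamma_i$'s: one must make sure the concatenation remains inside $\tilde{R}(c\log k,k)$ (which follows from the containment item (1) of Lemma \ref{lemma:geo_2} applied block-by-block) and that no issues arise from non-self-avoidance. Everything else is a direct assembly from the already-established geometric lemma, so the argument is essentially a verification rather than requiring any new probabilistic input.
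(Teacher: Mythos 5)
Your proposal is correct and follows essentially the same route as the paper: extract the crossing of good blocks, apply Lemma \ref{lemma:geo_2} to each consecutive pair, concatenate the resulting paths, and check $\omega$-openness via goodness (for $\omega''$), items (3)--(4) of that lemma (for $\omega_2,\omega_3$), and the hypothesis on $\pi_1(\tilde{R}(c\log k,k))$ (for $\omega_1$). Your indexing of the concatenation ($\gamma_0\ast\cdots\ast\gamma_{m-1}$ for $m+1$ blocks) is in fact slightly more careful than the paper's, but the argument is the same.
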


\begin{remark}
\label{rmk:geo_2}
The same strategy of concatenating paths as in the proof of Lemma \ref{cor:geo_2_2} can be used to show that for any infinite path of good blocks in $\Lambda_n$, there corresponds an infinite path $\gamma$ in $\mathbb{Z}^3$ whose sites are $\omega''$-open and have their projections into $\mathcal{P}_2$ and $\mathcal{P}_3$ being $\omega_2$ and $\omega_3$ open respectively.
\end{remark}

\subsection{Proof of Theorem \ref{theo:conn_deca_2}}
\label{subsec:poly_deca}

\begin{proof}[Proof of Theorem \ref{theo:conn_deca_2}]
As in the previous section we fix constants $c> 0$, $\delta>0$, $p_\bullet = p_\bullet (p_2, p_3) \in (0,1)$ and a nonnegative integer $n = n(p_2,p_3)$ so that (see Lemma \ref{lemma:pat_blo_goo}), $p'' \geq p_{\bullet}$,
\begin{equation}
\label{eq:p_p_btilde}
\mathbb{P}_{p_2} \times \mathbb{P}_{p_3} \times \mathbb{P}_{p''}\left(\tilde{\mathcal{B}}(c \log k, k) \right) \geq \delta
\end{equation}
for all positive integers $k \geq 1$.
We assume that $k$ is big enough so that
\begin{equation*}
\begin{split}
& \mathbb{P} \left( \left\{ \mathbf{0} \leftrightarrow \partial{B^3 \left( k \right) \text{ in $\mathbb{Z}^3$}} , |C_1(\omega_1)| < \infty \right\} \right) \geq \mathbb{P}_{\mathbf{p}} \left( \left\{ \mathbf{0} \leftrightarrow \partial{B^3 \left( (k-1) n \right)} , |C_1(\omega_1)| < \infty \right\} \right).
\end{split}
\end{equation*}

Let us now define the following events:
\begin{equation*}
 \mathcal{C}_2 = \left\{ \text{all sites}~(x,0,0) \in \mathcal{P}_2~\text{such that}~ 0 \leq y \leq n (c \log{k}+1)/2 ~\text{are}~\omega_2 \text{-open} \right\}
\end{equation*}
\begin{equation*}
 \mathcal{C}_3 = \left\{ \text{all sites}~(0,y,0) \in \mathcal{P}_3~\text{such that}~ 0 \leq x \leq n (c \log{k}+1)/2 ~\text{are}~\omega_3 \text{-open} \right\}
\end{equation*}
\begin{equation*}
 \mathcal{D}_1 = \left\{ \text{all sites in } \pi_1 \left( \tilde{R}(c \log k , k) \right)\text{ are } \omega_1 \text{-open} \right\}
\end{equation*}
\begin{equation*}
\mathcal{D}'' = \left\{ \text{all sites in } \pi_1 \left( \tilde{R}(c \log k , k) \right)\text{ are } \omega'' \text{-open} \right\}
\end{equation*}
\begin{equation*}
 \mathcal{E}_1 = \left\{ \text{all sites of $\mathcal{P}_1$ lying at $l_{\infty}$-distance $1$ from $\pi_1\left(\tilde{R}(c \log k, k) \right)$ are $\omega_1$-closed} \right\}.
\end{equation*}

Using the fact that $\mathcal{D}_1$ and $\mathcal{E}_1$ are independent and a simply counting of the number of sites in $\tilde{R}(c \log{k},k)$ and its outer boundary, we have
\begin{equation}
\label{eq:pol_dec_1}
\mathbb{P}_{p_1} \left( \mathcal{D}_1 \cap \mathcal{E}_1 \right) = \mathbb{P}_{p_1}(\mathcal{D}_1) \mathbb{P}_{p_1}(\mathcal{E}_1) \geq {p_1}^{n^2(c \log k + 1)} (1-p_1)^{4n(c \log k + 1)}.
\end{equation}
Note also that for $i=2,3$,
\begin{equation}
\label{eq:PpiCi}
\mathbb{P}_{p_i} (\mathcal{C}_i) \geq p_i^{n(c\log{k} +1)/2}
\end{equation}
and
\begin{equation}
\label{eq:Pp''C''}
\mathbb{P}_{p''}(\mathcal{D}'') \geq (p'')^{n^2(c \log{k}+1)}.
\end{equation}

Now, if $\omega_1 \in \mathcal{D}_1$ and $(\omega_2, \omega_3, \omega'')\in \tilde{\mathcal{B}}(c \log k, k)$ then by Lemma \ref{cor:geo_2_2} there exists a $\omega$-open path starting at a (random) site $v_0$ in $\mathcal{P}_1$ and finishing at a site $v_r$ in  $\partial {B^{3}\left((k-1) n \right)}$.
Then, in order to have the origin connected to $\partial{B^3((k-1)n)}$ it is enough to guarantee that it is connected to $v_0$ by an $\omega$-open path contained in $\pi_1(\tilde{R}(c\log{k}, n)$.
This can be accomplished by simply requiring further that $\omega_2 \in \mathcal{C}_2$, $\omega_3 \in \mathcal{C}_3$ and $\omega'' \in \mathcal{D}''$ since this would garantee that all sites in $\pi_1 \left(\tilde{R}(c \log k, k)\right)$ are $\omega$-open. 
Thus we have concluded that  $\left\{ \omega \in \left\{ \mathbf{0} \leftrightarrow \partial{B^3 \left( (k-1) n \right)} \text{ in $\mathbb{Z}^3$} \right\} \right\}$ contais
\begin{equation*}
 \{\omega_1 \in \mathcal{D}_1, \omega'' \in \mathcal{D}'',\; \omega_2 \in  \mathcal{C}_2,\; \omega_3 \in \mathcal{C}_3,\; (\omega_2, \omega_3, \omega'') \in \tilde{\mathcal{B}}(c \log k, k) \}.
\end{equation*}

In addition, if the event $\{ \omega_1 \in \mathcal{E}_1\}$ occurs, then the $\omega_1$-open connected component at the origin in $\mathcal{P}_1$ is finite, \emph{i.e}.\ $|C_1(\omega_1)| < \infty$.
We can then conclude that
\begin{equation}
\label{eq:pol_dec_2}
\begin{split}
& \mathbb{P} \left( \omega \in \left\{ \mathbf{0} \leftrightarrow \partial{B^3 \left( (k-1) n \right)} \text{ in $\mathbb{Z}^3$} \right\}, |C_1(\omega_1)| < \infty  \right) \geq \\
& \mathbb{P}_{p_1} \left( \mathcal{D}_1 \cap \mathcal{E}_1 \right) \mathbb{P}_{p_2} (\mathcal{C}_2) \mathbb{P}_{p_3} (\mathcal{C}_3) \mathbb{P}_{p''} (\mathcal{D}'') \mathbb{P}_{p_2} \times \mathbb{P}_{p_3} \times \mathbb{P}_{p''} \left( \tilde{\mathcal{B}}(c \log k, k) \right)
\end{split}
\end{equation}
were we used the Harris-FKG inequality in order to decouple the events defined in terms of $\omega_2$, $\omega_3$ and $\omega''$.

Now plugging equations (\ref{eq:pol_dec_1}) \eqref{eq:PpiCi} and \eqref{eq:Pp''C''} into equation (\ref{eq:pol_dec_2}) we get:
\begin{equation*}
\begin{split}
&\mathbb{P} \left( \omega \in \left\{ \mathbf{0} \leftrightarrow \partial{B^3 \left( (k-1) n \right)} \text{ in $\mathbb{Z}^3$}\right\}, |C| < \infty  \right)  \geq \\
& \delta ~{p_1}^{n^2(c \log k + 1)} {(1-p_1)}^{4n(c \log k + 1)}~ p_2^{n(c \log k + 1)/2} ~p_3^{n(c \log k + 2)/2}~ (p'')^{n^2 (c \log{k}+1)} = \\
& \alpha'(\mathbf{p}) k^{-\alpha(\mathbf{p})},
\end{split}
\end{equation*}
where the constants $\alpha'$ and $\alpha$ depend on $\mathbf{p}$.

\end{proof}

\subsection{Proof of Theorem \ref{theo:pha_tra_2}}

In this section we prove Theorem \ref{theo:pha_tra_2}.
We note that this result implies equation \eqref{eq:phas_tran_2} in Theorem \ref{theo:phas_tran} however the proof presented on Section \ref{sec:phas_tran} is much simpler.
The values of $p_2$, $p_3 > p_c(\mathbb{Z}^2)$ will remain fixed.

Before proving Theorem \ref{theo:pha_tra_2}, let us introduce some concepts that will be useful in the proof.
We say that a path $\gamma = \{v_0, v_1, \ldots \}$ in $\mathbb{Z}^d$ is directed if its increments $v_{i+1} - v_{i}$ belong to the set of the canonical unit vectors $e_1,\ldots, e_d$.
A straight segment of length $k$ of the path $\gamma$ is any sequence $v_j,\ldots, v_{j+k}$ such that $|v_j - v_{j+l}| = l e_i$ for all $l \in 0,\dots, k$ and for some $i \in 1,\ldots, d$.
A path is said to be $2$-directed if it is directed and any of its straight segments has length at most $2$.
Let
\[
\tilde{p}_c(\mathbb{Z}^d) := \inf \{p \in [0,1]: \mathbb{P}_p (\text{$\exists$ an infinite $2$-directed path starting from $\mathbf{0}$})>0\}.
\]
By standard arguments based on directed percolation, one can show that this critical parameter for $2$-directed percolation satisfies $0 < \tilde{p}_c(\mathbb{Z}^2,2) < 1$.

Given a family of $2$-directed paths in $\mathbb{Z}^2$ starting at the origin, one can define the lowest path generated by this family in the usual way.
It is the the path $\gamma = \{v_0, v_1, \ldots\}$ such that each $v_i$ has the minimum vertical coordinate among all the sites that have the same horizontal coordinate as $v_i$ and that belong to at least one path in the family.

Recall that $R_1 (n; j,l)$, $i, j \in \mathbb{Z}$ are squares of side-length $n$ contained in $\mathcal{P}_1$ and define
\[
\mathcal{P}^{(n)}_1 := \{R_1 (n;j,l) ; ~j \in \mathbb{Z} , l \in \mathbb{Z}\}.
\]
which can be naturally regarded as a graph isomorphic to the $\mathbb{Z}^2$-lattice by adding an edge between $R_1(n;j,l)$ and $R_1(n;j',l')$ whenever $|j'-j| + |l'-l| = 1$.
The notion of a $2$-oriented path extends to $\mathcal{P}^{(n)}_1$ in the natural way.
Let, for the moment, $\gamma_n \subset \mathcal{P}^{(n)}_1$ be a fixed $2$-directed path of rectangles and consider
$\tilde{\gamma}_n = \left\{\tilde{\Gamma}_n (j,l,h) \in \Lambda_n ;\; \pi_1 (\tilde{\Gamma}_n(j,l,h)) \in \gamma_n \right\}$.
The set $\tilde{\gamma}_n$ will be called the lift of the path $\gamma_n$ and when considered as a sub-lattice of $\Lambda_n$ it is isomorphic to the $\mathbb{Z}^2$-lattice.

We now define the following event in $\{0,1\}^{\mathcal{P}_2} \times \{0,1\}^{\mathcal{P}_3} \times \{0,1\}^{\mathbb{Z}^3}$:
\[
\tilde{\mathcal{G}}(\gamma_n) = \left\{\text{$\exists$ an infinite path of good blocks in $\tilde{\gamma}_n$ starting from $\tilde{\Gamma}_n (0,0,0)$}\right\}.
\]

The following lemma shows that we can choose $n$ large enough so that the probability of finding infinite paths of good boxes in $\tilde{\gamma}_n$ is positive.
\begin{lemma}
\label{lemma:pha_tra_2_1}
There exists $n = n(p_2, p_3)$ and $p_{\bullet} = p_{\bullet}(p_2,p_3)$ such that, for every $p'' \geq p_{\bullet}$ and every $2$-oriented path $\tilde{\gamma}_n$ in $\mathcal{P}_1^{(n)}$,
\begin{equation}
\label{eq:pha_tran_2_1}
\mathbb{P}_{p_2} \times \mathbb{P}_{p_3} \times \mathbb{P}_{p''} \left( \tilde{\mathcal{G}}(\gamma_n)  \right) > 0.
\end{equation}
\end{lemma}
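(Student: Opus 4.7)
The plan is to mimic the argument of Lemma \ref{lemma:pat_blo_goo}, but now restricted to the lift $\tilde{\gamma}_n$, producing a percolation conclusion from a single block instead of a crossing of a long thin slab. First I would parametrize $\gamma_n$ as $\{R_1(n; j_k, l_k)\}_{k \geq 0}$ with $(j_0, l_0) = (0,0)$ and identify $\tilde{\gamma}_n$ with the half-plane $\mathbb{N} \times \mathbb{Z}$ via the coordinates $(k, h)$. I would then introduce the site process
\[
X(k, h) \,:=\, \mathbf{1}\{\tilde{\Gamma}_n(j_k, l_k, h) \text{ is good}\},
\]
so that $\tilde{\mathcal{G}}(\gamma_n)$ is precisely the event that the corner $(0,0)$ belongs to an infinite cluster of $1$'s of $X$.

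Next I would show that $X$ is a finite-range dependent percolation process on $\mathbb{N} \times \mathbb{Z}$, with range uniform in $\gamma_n$. The event that a block is good depends only on $\omega_2$ in an $O(n)$-neighborhood of $(jn, hn)$ in $\mathcal{P}_2$, on $\omega_3$ in an $O(n)$-neighborhood of $(ln, hn)$ in $\mathcal{P}_3$, and on $\omega''$ inside the block. Because $\gamma_n$ is $2$-directed, any three consecutive increments include at least one in each coordinate direction; consequently after a bounded number of path-steps (say $r = 6$) both $|j_k - j_{k'}|$ and $|l_k - l_{k'}|$ exceed $1$, which forces $\omega_2$- and $\omega_3$-independence of the corresponding blocks whenever $|h - h'| \leq 1$ (and independence is automatic once $|h-h'| > 1$). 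Hence $X$ is $r$-dependent with $r$ not depending on $\gamma_n$.

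The marginal $\mathbb{P}(X(0,0) = 1)$ can be made as close to $1$ as desired: since $p_2, p_3 > p_c(\mathbb{Z}^2)$, the finite-size criterion used to derive \eqref{eq:cro_log_2} gives $\mathbb{P}_{p_i}(\mathcal{A}_i(2n,n) \cap \mathcal{B}_i(n,2n)) \to 1$ as $n \to \infty$ for $i=2,3$; and for fixed $n$, $\mathbb{P}_{p''}(\mathcal{C}(n;j,l,h)) = (p'')^{n^3} \to 1$ as $p'' \to 1$. Given any $u \in (0,1)$, choose $n$ large enough that the two crossing probabilities exceed $u^{1/4}$, then pick $p_\bullet$ so that $(p_\bullet)^{n^3} \geq u^{1/2}$; by FKG, $\mathbb{P}(X(k,h) = 1) \geq u$ for every $(k,h)$ whenever $p'' \geq p_\bullet$.

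Finally, applying \cite[Theorem 0.0]{liggett97} to the $r$-dependent process $X$, if $u$ is chosen sufficiently close to $1$ then $X$ stochastically dominates a Bernoulli site percolation on $\mathbb{N} \times \mathbb{Z}$ with parameter $\rho > p_c(\mathbb{Z}^2)$. For such a supercritical Bernoulli percolation in the half-plane, the corner $(0,0)$ belongs to an infinite cluster with positive probability, which yields $\tilde{\mathcal{G}}(\gamma_n)$ with positive probability, as required. The main delicate point is to ensure that $n$, $p_\bullet$, and the range $r$ can all be chosen \emph{uniformly} in the 2-directed path $\gamma_n$; this uniformity is guaranteed because the marginal is translation-invariant and the range $r$ depends only on the $2$-directed structure and on $n$, not on the specific path.
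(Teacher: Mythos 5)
Your proposal is correct and follows essentially the same route as the paper: index the good-block indicators along the lift of the $2$-directed path, observe that $2$-directedness gives finite-range dependence uniformly over the choice of path, push the marginal close to $1$ via large $n$ (crossings, as in Lemma \ref{lemma:pat_blo_goo}) and $p''$ close to $1$, and apply \cite[Theorem 0.0]{liggett97} to dominate a supercritical Bernoulli field. The only quibble is trivial bookkeeping: with crossing probabilities $\geq u^{1/4}$ and $(p_\bullet)^{n^3}\geq u^{1/2}$ the FKG product gives $u^{3/2}$ rather than $u$, which is harmless since $u$ is arbitrary (the paper uses exponents $1/8$ and $1/2$).
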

\begin{proof}
Consider the set of indices $I = \left\{ (j,l) \in \mathbb{Z}^2;~ R_1 (n,n;j,l) \in \gamma_n \right\}$ and 
$\tilde{I} = \{(j,l,h) \in \mathbb{Z}^3; ~ \tilde{\gamma}_n(j,l,h) \in \Gamma_n \}$.
Then $I$ can be regarded as a $2$-directed path in $\mathbb{Z}^2$ and $\tilde{I}$ as its lift in $\mathbb{Z}^3$. 
Let
\[
X(j,l,k) = \mathbf{1}_{\left\{\tilde{\Gamma}_n(j,l,h)~\text{is good}\right\}}
\]
and let $\mu$ denote its law on $\{0,1\}^{\tilde{I}}$.
The event $ \left\{ \tilde{\Gamma}_n(j,l,h)~ \text{is good} \right\}$ only depends on the $\omega_2$ and $\omega_3$ processes restricted to the projections $\pi_2 \left(\tilde{\Gamma}_n(j',l',h')\right)$ and $\pi_3 \left(\tilde{\Gamma}_n(j',l',h')\right)$ of rectangles $\tilde{\Gamma}_n(j',l',h')$ satisfying $|j'-j| + |l'-l| +|h'-h| \leq 1$.
Moreover, since the path $I$ is $2$-directed, it follows that there is a positive integer $M$ large enough such that the projections of each rectangle $\tilde{\Gamma}_n(j,l,h)$ into $\mathcal{P}_2$ and $\mathcal{P}_3$ overlap with at most the projection of $M$ other rectangles in $\Lambda_n$.
Then it follows that under $\mathbb{P}_{p_2} \times \mathbb{P}_{p_3} \times \mathbb{P}_{p''}$, $\left\{ X(j,l,k) \right\}_{(j,l,k) \in \tilde{I}}$ is a $M$-dependent percolation process on $\tilde{I}$.
Applying once more \cite[Theorem 0.0]{liggett97} we can find $u \in (0,1)$ large enough such that if 
\begin{equation}
\label{eq:Pp2Pp3P''X}
\mathbb{P}_{p_2} \times \mathbb{P}_{p_3} \times \mathbb{P}_{p''} ( X(j,l,k) =1) > u
\end{equation}
then \eqref{eq:pha_tran_2_1} holds.
The same arguments as in \ref{lemma:pat_blo_goo} can be applied in order to find $n = n(p_2, p_3)$ and $p_{\bullet} = p_{\bullet}(p_2, p_3)$ such that \eqref{eq:Pp2Pp3P''X} holds which concludes the proof.
\end{proof}

We are in the position to prove Theorem \ref{theo:pha_tra_2}.
Before we present the proof, let us just discuss the strategy.
We start by fixing $n$ and $p_{\bullet}$ as in the statement of the previous lemma.
Then choosing $p_1$ high enough then, under $\mathbb{P}_{p_1}$, the event that are infinite $2$-directed paths of $\omega_1$-open blocks in $\mathcal{P}_1^{(n)}$ have positive probability. 
Now, conditioning on the the realization of such a path, and assuming that $p'' \geq p_{\bullet}$ the previous lemma garantees that the probability of finding an infinite path of good blocks in its lift is positive.
The proof is concluded by using Lemma \ref{lemma:geo_2} in order to relate such a path of good blocks to an infinite path of $\omega$-open sites.

\begin{proof}[Proof of Theorem \ref{theo:pha_tra_2}]
Let us fix $\varepsilon = \min\{1-p_c(\mathbb{Z}^2,2)^{1/n^2}, 1-p_{\bullet}\}$ where $n=n(p_2,p_3)$ and $p_{\bullet} = p_{\bullet}(p_2,p_3)$ are given as in Lemma \ref{lemma:pha_tra_2_1}.

Given $\omega_1 \in \{0,1\}^{\mathcal{P}_1}$ we say that the square $R_1(n;j,l) \subset \mathcal{P}_1$ is open if $\omega_1 (v) =1$ for all $v \in R_1(n;j,l)$.
Let 
\[
\mathcal{A}_1 = \left\{ \exists ~\text{an infinite $2$-directed path of open rectangles in $\mathcal{P}_1^{(n)}$ starting at $R_1(n,n;0,0)$}\right\}
\]
Then
\begin{equation}
\label{eq:pp1}
\mathbb{P}_{p_1} \left( \omega_1 \in {\mathcal{A}_1} \right) = \mathbb{P}_{p^{n^2}_1} \left( \left\{ \text{$\exists$ an infinite $2$-directed path starting at $\mathbf{0}$} \right\} \right) >0,
\end{equation}
where the inequality follows from the definition of $\varepsilon$.

For each $\omega_1 \in \mathcal{A}_1$ let $\gamma_n$ be the lowest $2$-directed path of open rectangles in $\mathcal{P}_1^{(n)}$ starting at $R_1(n;0,0)$ and let $\tilde{\gamma}_n$ denote its lift in $\Lambda_n$.

Let $\{ \tilde{\Gamma}_n(0,0,0) \leftrightarrow \infty \}$ denote the event that there is an infinite path of $\omega$-open sites $v$ in $\mathbb{Z}^3$ with the starting point belonging to the block $\tilde{\Gamma}_n(0,0,0)$.
Writing $\mathbb{P}$ for $\mathbb{P}_{p_1} \times \cdots \times \mathbb{P}_{p_2}$ we have
\begin{equation}
\label{eq:pha_tra_2_3}
\begin{split}
& \mathbb{P} \left( \omega \in \{\tilde{\Gamma}_n(0,0,0) \leftrightarrow \infty \}\right) = 
 \mathbb{E} \left[ \mathbb{P} \left( \omega \in \left\{ \tilde{\Gamma}_n(0,0,0) \leftrightarrow 
\infty \right\} \Big{|} \mathcal{F}_1 \right) \right] \geq \\
& \mathbb{E}_{p_1} \left[ \mathbf{1}_{\{\omega_1 \in \mathcal{A}_1\}} \mathbb{P} \left( \omega \in \left\{ \tilde{\Gamma}_n(0,0,0) \leftrightarrow 
\infty ~\text{in}~ \tilde{\gamma}_n \right\} \Big{|} \mathcal{F}_1 \right)(\omega_1) \right].
\end{split}
\end{equation}

Now, on the event $\{\omega_1 \in \mathcal{A}_1\}$, all the sites in $\pi_1(\tilde{\gamma}_n)$ are $\omega_1$-open. So, in view of Lemma \ref{lemma:geo_2} and Remark \ref{rmk:geo_2}, in order for $\omega$ to belong to $\left\{ \tilde{\Gamma}_n(0,0,0) \leftrightarrow 
\infty ~\text{in}~ \tilde{\gamma}_n (\omega_1) \right\}$ it suffices that $(\omega_2, \omega_3, \omega'') \in \tilde{\mathcal{G}_n}(\tilde{\gamma}_n(\omega_1))$.

\begin{equation}
\label{eq:pha_tra_2_4}
\begin{split}
& \mathbb{E}_{p_1} \left[ \mathbf{1}_{\{\omega_1 \in \mathcal{A}_1\}} \mathbb{P} \left( \omega \in \left\{ \tilde{\Gamma}_n(0,0,0) \leftrightarrow 
\infty ~\text{in}~ \tilde{\gamma}_n \right\} \Big{|} \mathcal{F}_1 \right)(\omega_1) \right] \geq \\
& \mathbb{E}_{p_1} \left[ \mathbf{1}_{\{\omega_1 \in \mathcal{A}_1\}} \mathbb{P}_{p_2} \times \mathbb{P}_{p_3} \times \mathbb{P}_{p''} \left( \tilde{\mathcal{G}_n}(\tilde{\gamma}_n(\omega_1))\right) \right] >0,
\end{split}
\end{equation}
where the last inequality follows from Lemma \ref{lemma:pha_tra_2_1} in view of the fact that $\varepsilon \leq 1-p_{\bullet}$.

Plugging \eqref{eq:pha_tra_2_4} into \eqref{eq:pha_tra_2_3} we obtain $\mathbb{P} (\omega \in \{ \tilde{\Gamma}_n(0,0,0) \leftrightarrow \infty \} ) >0$.

Consider for each $i \in \{1,2,3\}$, the increasing events
$\mathcal{B}_i := \left\{ \pi_i \left( \tilde{\Gamma}_n (0,0,0) \right) \text{is $\omega_i$-open} \right\}$.
If $\omega_i \in \mathcal{B}_i$ for all $i = 1,2,3$ then all sites in $\tilde{\Gamma}_n (0,0,0)$ are $\omega'$-open.
Then $\{ \mathbf{0} \leftrightarrow \infty \}$ is contained in $\left\{\omega \in \{ \tilde{\Gamma}_n(0,0,0) \leftrightarrow \infty \} \right\} \cap \{\omega_i \in\mathcal{B}_i, \text{ for $i=1,2,3$}.\}$ 
Using the Harris-FKG inequality and the last lemma we have that:
\begin{equation*}
\begin{split}
& \mathbb{P}_{\mathbf{p}} \left( \left\{ \mathbf{0} \leftrightarrow \infty \right\} \right) \ge  \mathbb{P} \left(\omega \in \left\{ \tilde{\Gamma}_n(0,0,0) \leftrightarrow \infty \right\} \right) \prod_{i=1,2,3} \mathbb{P}_{p_i}(\{\mathcal{B}_i\}) > 0.
\end{split}
\end{equation*}
\end{proof}

\begin{remark}
\label{r:p_*}
The same ideas used in the proof of Theorem \ref{theo:pha_tra_2} can be employed to obtain an upper bound for the critical value $p_*$ introduced in \eqref{e:p_*}.
In fact, if $p_1 > \tilde{p}_c(\mathbb{Z}^2)^{1/3}$, then we can find, with positive probability a 2-directed open path $\gamma_1 \subset \mathcal{P}_1$.
Conditioned in $\gamma_1$ the restriction of the process $\omega$ to $\tilde{\gamma}_1 := \pi_1^{-1} (\gamma_1)$ is a one-dependent percolation process.
Thinking of $\tilde{\gamma}_1$ as a copy of $\mathbb{Z}^2$, one can again use a simple block argument to show that if $p_2$ and $p_3$ are bigger than $\tilde{p}_c({\mathbb{Z}^2,2})^{1/3}$, then one can find, with probability one, an infinite path of $(\omega_2 \cdot \omega_3)$-open sites in $\tilde{\gamma_1}$.
This shows that $p_* \leq \tilde{p}_c(\mathbb{Z}^2,2)^{1/3}$.
\end{remark}

\section{The number of infinite connected components}
\label{number_clusters}

Recall that, $N(\omega)$ denotes the number of infinite connected components of $\omega \in \Omega$. 
In this section we prove Theorem \ref{theo:num_clu} establishing that, $N$ is either $0$, $1$ or $\infty$  almost surely.

We start with a more general setting.
Let $\mu$ be a translation invariant probability measure on $\mathcal{F}$.
In \cite{Newman81_1} it was proved that, if $\mu$ is ergodic and satisfies the so-called \emph{finite energy condition} then $N \in \{0, 1, \infty\}$.
Roughly speaking, satisfying the finite energy condition means that the image of events of positive probability by local modifications remains an event of positive probability (see \cite{Newman81_1} for the precise definition).

For proving this result, the authors in \cite{Newman81_1} start assuming that $N$ is finite and strictly greater then one.
Then they fix a box large enough so that the probability that it intersects all the $N$ infinite connected components is positive.
Performing the local modification that consists in opening all sites in that box while keeping the configuration out of it untouched they merge all the connected components in a unique one. 
By the finite energy condition, one concludes that the probability of having a unique connected component is positive.
However this contradicts the fact that $N$ is a constant strictly greater than one, almost surely.

However, the measure $\mathbb{P}_{\mathbf{p}}$ does not satisfy the finite energy condition.
For instance, on the event that all neighbors of the origin are open, the origin itself is open with probability one.
In particular flipping the state of the origin to $0$ is a local modification that leads to an event of probability zero.

Moreover, the proof presented in \cite{Newman81_1} cannot be directly adapted to $\mathbb{P}_{\mathbf{p}}$.
In fact, in order to open all the sites in a given box, one would need to modify the state of vertices lying all along the lines that intersect this box and that are parallel to the coordinate axis.
Therefore, as one attempts to merge all the infinite connected components into a single one, it could be that case that other infinite components would appear elsewhere.
Then, in principle, this argument does not lead to any contradiction.
In order to prove Theorem \ref{theo:num_clu} we overcome this difficulty by dealing with the density of the connected components rather than their number, as we show below.

We say that a subset $A \subset \mathbb{Z}^d$ has density $\rho$ if for any increasing sequence of rectangles $R_1 \subset R_2 \subset \ldots$ with $\cup_{i \geq 1} R_i = \mathbb{Z}^d$ the limit
\begin{equation*}
\lim_{i \to \infty} \frac{|A \cap R_i|}{|R_i|}
\end{equation*}
exists and equals $\rho$.
It has been proved by Burton and Keane \cite[Theorem 1]{Burton89} that, for any translation invariant probability $\mu$ on $\mathcal{F}$, all the connected components have a density, $\mu$-almost surely. 

For $v$ and $w \in\mathbb{Z}^d$, let $T_v(w) = w+v$. 
Assuming that $\mu$ is translation invariant and ergodic with respect to the transformation $T_v$ for a $v \in \mathbb{Z}^d$, then $N$ is constant, $\mu$-almost surely.
Assume further that $N \geq 1$ $\mu$-almost surely and define a ranked density vector \emph{i.e.,} a random vector $\rho = \rho(\omega)$ whose entries are the densities of the infinite connected components of $\omega$ arranged in a non-increasing fashion.
More specifically define:
\begin{equation}
\label{eq:ran_den}
 \rho := \left\{
 \begin{array}{ll}
   ( \rho_1, \ldots , \rho_N ),  & \text{if}~ N < \infty, \\
   ( \rho_1, \rho_2, \ldots ), & \text{if}~ N = \infty;
 \end{array}
 \right.
\end{equation}
where $\rho_1 (\omega) \ge \rho_2 (\omega) \ge \cdots$ are the densities of the infinite connected components.
Since $\rho$ is invariant under $T_v$ we have that $\rho$ is almost surely constant.

The next proposition establishes that, under the assumption that $N<\infty$, no infinite connected components has zero density.
\begin{proposition}
\label{prop:den_fin_clu}
Let $\mu$ be a translation-invariant ergodic probability measure on $\Omega$ for which $0< N < \infty$ almost surely and let $\rho$ be the ranked density vector given by (\ref{eq:ran_den}).
Then all entries of $\rho$ are strictly positive constants, $\mu$-almost surely.
\end{proposition}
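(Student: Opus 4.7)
The plan is to argue by contradiction and translate the failure of positivity into a direct violation of translation invariance. Assume toward a contradiction that some entry of $\rho$ vanishes. Since $\rho$ is invariant under the translation group and $\mu$ is ergodic, $\rho$ is a.s. constant, so the assumption means that almost every realization has at least one (but at most $N$) infinite connected components of density $0$. Let
\[
U(\omega) := \bigcup_{\substack{i \text{ s.t. } C_i \text{ is an infinite}\\ \text{component of density } 0}} C_i(\omega),
\]
which is a finite union $\mu$-a.s. since $N < \infty$, and is nonempty $\mu$-a.s. by the assumption. The set $U$ is determined by $\omega$ together with the (deterministic) information of which entries of $\rho$ vanish, so it is a well defined random subset of $\mathbb{Z}^d$.

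First I would verify that $U$ is equivariant with respect to the translation action, i.e.\ $U(T_v\omega) = U(\omega) - v$. This is immediate from the observation that the shift $T_v$ maps clusters to clusters and preserves their densities (densities are translation-invariant functionals of a set), so zero-density infinite components of $T_v\omega$ are exactly the $(-v)$-translates of the zero-density infinite components of $\omega$. Consequently, for every $v \in \mathbb{Z}^d$,
\[
\mu(v \in U) \;=\; \mu\bigl(\mathbf{0} \in U(T_v \omega)\bigr) \;=\; \mu(\mathbf{0} \in U),
\]
so the probability that a prescribed vertex lies in $U$ is a single constant $p_0$ independent of the vertex.

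Next I would compute this constant $p_0$ two different ways and get a contradiction. On the one hand, for any sequence of rectangles $R_n \uparrow \mathbb{Z}^d$, Burton--Keane \cite{Burton89} yields $|C_i \cap R_n|/|R_n| \to \rho_i$ a.s.\ for each $i$; summing the finitely many clusters contributing to $U$ gives $|U \cap R_n|/|R_n| \to 0$ a.s., and since this ratio is bounded by $1$, the dominated convergence theorem yields
\[
\mathbb{E}_\mu\!\left[\frac{|U \cap R_n|}{|R_n|}\right] \xrightarrow[n\to\infty]{} 0.
\]
On the other hand, Fubini together with the translation-invariance established above gives
\[
\mathbb{E}_\mu\!\left[\frac{|U\cap R_n|}{|R_n|}\right] \;=\; \frac{1}{|R_n|}\sum_{v\in R_n} \mu(v\in U) \;=\; p_0
\]
for every $n$. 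Hence $p_0 = 0$. But then $\mu(v \in U) = 0$ for every $v \in \mathbb{Z}^d$, and countable subadditivity yields $\mu(U \neq \emptyset) \leq \sum_{v\in\mathbb{Z}^d} \mu(v \in U) = 0$, contradicting the fact that $U \neq \emptyset$ $\mu$-a.s.

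The argument is really just a density calculation, and the only subtle point is the first step: checking that $U$ is genuinely a translation-equivariant random set so that $\mu(v \in U)$ does not depend on $v$. Once that equivariance is in place, the two computations of $\mathbb{E}_\mu[|U\cap R_n|/|R_n|]$ pin down $p_0 = 0$, and the conclusion that no infinite component can have density zero follows. The main (mild) obstacle is therefore making precise that ``density of a cluster'' is a translation-invariant functional, so that the classification of clusters by the value of their density commutes with the shift.
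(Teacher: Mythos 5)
Your argument is correct and is essentially the paper's proof: both take the union of the zero-density infinite clusters (your $U$, the paper's $C'$), use translation invariance/equivariance to see that $\mu(v\in U)$ is a constant, compute the expected fraction of a large box occupied by $U$ in two ways (Burton--Keane densities plus bounded convergence versus Fubini), conclude that this constant is zero, and contradict the a.s.\ nonemptiness of $U$. Your explicit verification of the equivariance $U(T_v\omega)=U(\omega)-v$ is just a more detailed rendering of the paper's remark that the distribution of $C'$ is translation invariant.
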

\begin{proof}
As mentioned before, by ergodicity, each entry of $\rho$ is constant.
Assume, in order to find a contradiction, that there is an index $k \in \{1,\ldots, N\}$ for which $\rho_k=0$.
We can assume also that $k$ is the smallest index satisfying this property.
Note that, by the definition of the vector $\rho$, we have $\rho_j = 0$ for all $k \leq j \leq N$.

Let, for each $j \in \{1, \ldots, N\}$, $C_j$ stand for the connected component corresponding to the $j$-th entry of $\rho$ (selected in a arbitrary order when there are ties).
Define $C' = \cup_{j=k}^{N} C_j$.
Then $C'$ is a non-empty random infinite subset of $\mathbb{Z}^d$ whose distribution is invariant under lattice translations.
In particular, $\mu (\{\mathbf{0} \in C'\}) = \mu (\{v \in C' \})$ for all $v \in \mathbb{Z}^d$.

Let $R_1 \subset R_2 \subset \cdots$ be any increasing sequence of rectangles such that $\cup_{j=1}^{\infty} R_j = \mathbb{Z}^d$.
Then, since $C_k, \ldots, C_N$ have density $0 = \rho_k = \cdots = \rho_N$ we have that:
\begin{equation*}
 \lim_{n \to \infty} \frac{1}{|R_n|} \sum_{v \in R_n} \mathbf{1}_{\{v \in C'\}} = (N - k + 1)\rho_k = 0.
\end{equation*}
Integrating the left-hand side with respect to $\mu$, using the Bounded Convergence Theorem and translation-invariance we have that $\mu(\{\mathbf{0} \in C'\}) = 0$, so that, $\mu(\{C'= \emptyset\}) = 1$.
The proof is finished since this contradicts the fact that $C'$ is non-empty, almost surely.
\end{proof}

From now on we fix $\mathbb{P}_{\mathbf{p}}$ with $0< p_i < 1$ for all $i$.
For this measure, it is not the case that any $T_v$ is ergodic.
For instance, if we take $v = (1,0,\ldots, 0)$ then the event $\{\omega(kv) = 0 ~\text{for all}~ k \in \mathbb{Z}\}$ is invariant under $T_v$, however it has probability equal to $1 - p_1 \notin \{0,1\}$.

On the other hand, when $v$ has at least $d-1$ nonzero coordinates then it is the case that $T_v$ is mixing.
In order to see that, let $\mathcal{A}$ and $\mathcal{B}$ be two cylinders in $\mathcal{F}$ whose occurrences are determined by the states of the sites in the finite sets $A \subset \mathbb{Z}^d$ and $B\subset \mathbb{Z}^d$ respectively.
Then there exists a $n_0$ (depending on $A$ and $B$) such that $\pi_i(T^n_v B) \cap \pi_i(A) = \emptyset $ for all $i =  1,\ldots, d$ and for all $n> n_0$.
So, for all such indices $n > n_0$, we have that $\{\omega(w);~ w \in \mathcal{A}\}$ and $\{\omega(w); ~w \in T_v^n \mathcal{B} \}$ are independent sets of random variables, which implies the mixing condition.
In particular, $\mathbb{P}_{\mathbf{p}}$ is ergodic with respect to $T_e$, where $e = (1,1, \ldots, 1)$.
Since, $T_e N = N$ for all $\omega \in \Omega$, then $N$ is a random variable invariant with respect to $T_e$ and then $N$ is constant almost surely.

\begin{proof}[Proof of Theorem \ref{theo:num_clu}:]
Assume that $1 < N < \infty$ and let $\rho = (\rho_1, \ldots, \rho_N)$ be the ranked density vector defined in (\ref{eq:ran_den}).
Denote by $C(1), \ldots, C(N)$ be the infinite connected components corresponding to each of the entries of $\rho$.
When $\rho$ has entries with the same value, pick the corresponding  connected components arbitrarily among the possible choices.

From proposition \ref{prop:den_fin_clu} we have that all the entries of $\rho$ are strictly positive.
Thus we can fix a positive $n_0$ such that for all $n > n_0$ the probability that $B^d(n)$ intersects all the infinite connected components $C(1), \ldots, C(N)$ is positive.
So, fixing $n > n_0$ and denoting by $\mathcal{A}$ the event in $\Omega_1 \times \cdots \times \Omega_d$ given by
\begin{equation*}
\mathcal{A} = \left\{ \omega \in \{B^d(n) \text{ intersects all the connected components } C(1), \ldots, C(N) \} \right\}
\end{equation*}
we have that $\mathbb{P}_{p_1} \times \cdots \times \mathbb{P}_{p_d} (\mathcal{A}) > 0$.

Define the mapping $\phi : \Omega_1 \times \cdots \times \Omega_d \to \Omega_1 \times \cdots \times \Omega_d$ by setting
\begin{equation*}
\phi(\omega_1, \ldots, \omega_d) = (\phi_1(\omega_1),\cdots, \phi_d(\omega_d)),
\end{equation*}
where,
\begin{equation*}
 \phi_i (\omega_i) (v) = \left\{
 \begin{array}{cl}
   1, & \text{if}~ v \in B^{d-1}_i (n) \\
   \omega_i(v), & \text{ if } v \notin B^{d-1}_i (n).
\end{array}
\right.
\end{equation*}

On the event  $\phi(\mathcal{A})$ the vector $\rho$ has an entry with value at least equal to $\rho_1 + \cdots + \rho_N > \rho_1$, since opening the sites in $B^{d} (n)$ merges the connected components $C(1), \ldots, C(N)$ in a single $\omega$-connected component.
Since $\mathcal{A}$ is increasing then the fact that $\mathbb{P}_{p_1} \times \cdots \times \mathbb{P}_{p_d} (\mathcal{A}) > 0$, implies that  $\mathbb{P}_{p_1} \times \cdots \times \mathbb{P}_{p_d} (\phi(\mathcal{A})) > 0$.
This implies that the vector $\rho$ has an entry at least equal to $\rho_1 + \cdots + \rho_N$ with positive probability 
contradicting the fact that the first entry of $\rho$ is constant and equal to $\rho_1$ almost surely.
\end{proof}

\section{Appendix}

\subsection{Proof of Lemma \ref{lemma:geo}}
In this section we give a prove of lemma \ref{lemma:geo}.

\begin{proof}[Proof of Lemma \ref{lemma:geo}:]
Let $h = |h(\gamma)| = |h(\gamma')|$.
We will use induction in $h$.
We also restrict ourselves to the case $h(v_0) = h(w_0) = 0$ and $h(v_m) = h(w_{m'}) = h > 0$.
The proof of any other case is similar.

We first consider $h = 1$.
Let $\gamma_H = \{ v_0, \ldots, v_{m-1} \}$ and $\gamma'_H = \{ w_0, \ldots, w_{m'-1} \}$ the horizontal parts of the paths $\gamma$ and 
$\gamma'$ (note that they can be a single point if $n=1$ or $m'=1$).
We can thus define:
\begin{equation}
 \label{eq:gamm_h_w_0}
 \gamma_H \times w_0 = \{ v_0 \times w_0, v_1 \times w_0, \ldots, v_{m-1} \times w_0\} \text{ and }
\end{equation}
\begin{equation}
 \label{eq:v_m-1_gamma_prim_h}
 v_{m-1} \times \gamma'_H = \{v_{m-1} \times w_0, v_{m-1} \times w_1 , \ldots, v_{m-1} \times w_{m'-1}\}. 
\end{equation}
The paths above are well defined, since $h(v_j) = h(w_i) = 0$ for any $v_j$ and $w_j$ appearing at the right hand side of those equations.
Since the ending point of $\gamma_H \times w_0$ is equal to the starting point of $v_{m-1} \times \gamma'_H$ we can define
\begin{equation}
 \gamma_H \times \gamma'_H = (\gamma_h \times w_0) * (v_{m-1} \times \gamma'_H). 
\end{equation}
It is then straightforward to check that $\pi_2 (\gamma_H \times \gamma'_H) = \gamma_H$ and that $\pi_3(\gamma_H \times \gamma'_H) = \gamma'_H$ and that $\gamma_H \times \gamma'_H$ starts at $v_0 \times w_0$ and ends at $v_{m-1} \times w_{m'-1}$.
If we now let $\gamma_V = \{v_{m-1}, v_m\}$ and $\gamma'_V = \{w_{m'-1}, w_{m'}\}$ be the vertical parts of $\gamma$ and $\gamma'$ respectively, and define
\begin{equation}
 \gamma_V \times \gamma'_V = \{ v_{m-1} \times w_{m'-1}, v_m \times w_{m'}\}
\end{equation}
 then $\pi_2(\gamma_V \times \gamma'_V) = \gamma_V$ and $\pi_3(\gamma_V \times \gamma'_V) = \gamma'_V$.
Finally let us set
\begin{equation}
 \gamma \times \gamma' = (\gamma_H \times \gamma'_H)*(\gamma_V \times \gamma'_V)
\end{equation}
which is a path starting at $v_0 \times w_0$, ending at $v_m \times w_{m'}$ and satisfying \eqref{eq:proj_gamm_gamm_prim}.
This finishes the proof for $h=1$.

Now let us consider the case $h = h_0 +1$ where $h_0 \geq 1$ is fixed.
Assuming that the lemma holds for any pair of compatible paths having height no greater than $n_0 \geq 1$ we are going to show that the lemma holds for $\gamma$ and $\gamma'$ finishing thus the proof.

We begin by splitting the paths $\gamma$ and $\gamma'$ into several up and down-excursions having variation $n_0$.
For that let $t_0 = t'_0 = 0$ and define inductively for all $n \geq 1$:

\begin{equation}
 \begin{split}
  & t_{2n-1} = \inf \{ j > t_{2n-2}; h(v_j) = h_0\}\\
  & t_{2n} = \inf \{ j > t_{2n-1}; h(v_j) = 0 \},
 \end{split}
\end{equation}
with the convention that $\inf \emptyset = \infty$.
Let $f$ be defined so that $2f-1$ is the number of finite elements in the sequence $t_0, t_1, t_2, \ldots$.
Then $f$ represents the number of excursions from height zero to height $h_0$.
Similarly we define $t'_{2n-1}$, $t'_{2n}$ and $f'$, the analogous indices for the path $\gamma'$.
We will assume that $f> 1$ and $f'>1$.
The other cases are simpler to deal with.

Then we have the following sequences:
\begin{equation}
 t_0 < t_1 < \cdots < t_{2f-1} \text{  and  } t'_0 < t'_1 < \cdots < t'_{2f'-1}
\end{equation}
and the paths:
\begin{equation}
 \begin{split}
  & \gamma_j = \{ v_{t_j}, \ldots, v_{t_{j+1}}\} \text{ for } j=0,\ldots, 2f-2 \\
  & \gamma'_j = \{ w_{t'_j}, \ldots, w_{t'_{j+1}}\} \text{ for } j=0, \ldots, 2f'-2.
 \end{split}
\end{equation}
Note that, if $j$ is even, then $\gamma_j$ and $\gamma'_j$ are paths with variation equal to $h_0$ with the starting point having height equal to zero and the ending point having height equal to $h_0$.

Let us also define the following paths:
\begin{equation}
\begin{split}
 & \eta = \overline{\left(\overline{\gamma_0} \wedge 0\right)} \text{ and }\\
 & \zeta = \overline{\left(\overline{\gamma_{2f-2}} \wedge 0\right)}.
\end{split}
\end{equation}
We also define the paths $\eta'$ and $\zeta'$ as the analogues of $\eta$ and $\zeta$ for the path $\gamma'$.
In words, $\eta$ can be described as the set of sites that would be traversed when one travels along $\gamma_0$ after visiting height zero for the last time.
Note that $\eta$ connects a site lying at height zero to a site lying at height $h_0$ without ever touching these two heights in between.
The paths $\zeta$, $\eta'$ and $\zeta'$ can be described in a similar fashion.

Having already defined the paths $\gamma_0, \ldots, \gamma_{2f-2}$ and $\gamma'_0, \ldots, \gamma'_{2f'-2}$ let us now define:
\begin{equation}
\begin{split}
 & \gamma_{2f-1} = \overline{\zeta} \wedge 1 ~~~~~\text{ and }~~~ \gamma_{2f} = \gamma \setminus (\gamma_0 * \cdots * \gamma_{2f-1}) \\
 & \gamma'_{2f'-1} = \overline{\zeta'} \wedge 1 ~~~\text{ and }~~~ \gamma_{2f'} = \gamma \setminus (\gamma'_0 * \cdots * \gamma_{2f'-1}).
\end{split}
\end{equation}
Thus we can write
\begin{equation}
\begin{split}
 &\gamma = \gamma_0 * \gamma_1 * \cdots * \gamma_{2f-2} * \gamma_{2f-1} * \gamma_{2f} ~~~~\text{ and } \\
 &\gamma' = \gamma'_0 * \gamma'_1 * \cdots * \gamma_{2f'-2} * \gamma_{2f'-1} * \gamma_{2f'}.
\end{split}
\end{equation}

Roughly speaking, this equation express the decomposition of $\gamma$ (and similarly for $\gamma'$) as follows:
Go up along $\gamma$ until hitting height $h_0$. 
Then go down back to height zero. 
Repeat it for $f-1$ times and then go up again until hitting height $h_0$.
At that point the path $\overline{\gamma}_{2f-2}$ has just been traversed from bottom to top.
Now go along its reversal $\overline{\gamma}_{2f-2}$ stopping at the step just after reaching height zero.
This corresponds to $\gamma_{2f-1}$.
Then follow $\gamma$ from this point on until hitting its last site $v_m$.

Note that $\gamma_0$ and $\gamma'_0$ are two compatible paths of variation $h_0$ so, by the induction hypothesis there is a path
\begin{equation}
 \gamma_0 \times \gamma'_0
\end{equation}
starting at $v_0 \times w_0$ and ending at $v_{t_1} \times w_{t'_1}$ and such that $\pi_2(\gamma_0 \times \gamma'_0) = \gamma_0$ and $\pi_3(\gamma_0 \times \gamma'_0) = \gamma'_0$.
Also, for each $0< j<2f-2$ odd, $\gamma_j$ and $\overline{\eta'}$ are compatible paths of height $h_0$.
Similarly for each $0 < j \leq 2f-2$ even, $\gamma_j$ and $\eta'$ also constitute a pair of compatible paths.
So, we can pick the paths $\gamma_j \times \eta'$ for $j$ even and $\gamma_j \times \overline{\eta'}$ for $j$ odd.
All those paths have their projections into $\mathcal{P}_2$ equal to $\gamma_j$ and their projections into $\mathcal{P}_3$ equal to $\eta'$ or $\overline{\eta'}$.
Also the ending point of each one of them is the starting point of the following one.
So we can define:
\begin{equation}
 \gamma \times \eta' = (\gamma_1 \times \overline{\eta'}) * (\gamma_2 \times \eta') * \ldots * (\gamma_{2f-2} \times \eta') 
\end{equation}
and it follows that $\pi_2 (\gamma \times \eta') = \gamma$ and $\pi_3(\gamma \times \eta') = \eta'$.

Following an analogous procedure we can pick the paths
$\zeta \times \gamma'_j$ for $j$ even and $\overline{\zeta} \times \gamma'_j$ for $j$ odd ($1 \leq j \leq 2f'-2$) and then define:
\begin{equation}
 \zeta \times \gamma' = (\overline{\zeta} \times \gamma'_1) * (\zeta \times \gamma'_2) * \cdots * (\zeta \times \gamma'_{2f'-2}).
\end{equation}
Note that this path starts at $v_{t_{2f-2}} \times w_{t'_1}$ and ends at $v_{t_{2f-2}} \times w_{t'_{2f'-2}}$.
Also they satisfy that $\pi_2(\zeta \times \gamma') = \zeta$ or $\overline{\zeta}$ and $\pi_3 (\zeta \times \gamma') = \gamma'$.

Noting also that $\gamma_{2f-1}$ and $\gamma'_{2f'-1}$ are compatible paths with variation equal to $h_0 - 1$ starting at $v_{t_{2f-2}}$ and $w'_{t_{2f'-2}}$ respectively and that $\gamma_{2f}$ and $\gamma'_{2f'}$ are compatible paths of variation $h_0$, we can then pick
\begin{equation}
 \gamma_{2f-1} \times \gamma_{2f'-1} \text{  and  } \gamma_{2f} \times \gamma'_{2f'}
\end{equation}
and concatenate then in order to have a path starting at $v_{t_{2f-2}} \times w_{t'_{2f'-2}}$ and finishing at $v_m \times w_{m'}$ and having:
\begin{equation}
\begin{split}
 &  \pi_2 \left( (\gamma_{2f-1} \times \gamma'_{2f'-1}) * (\gamma_{2f} \times \gamma_{2f'}) \right) \subset \gamma_{2f-1} \cup \gamma_{2f} \text{ and }\\
 & \pi_3 \left( (\gamma_{2f-1} \times \gamma'_{2f'-1}) * (\gamma_{2f}*\gamma'_{2f'}) \right) \subset \gamma'_{2f'-1} \cup \gamma'_{2f'}.
\end{split}
\end{equation}

Finally let us define:
\begin{equation}
 \gamma \times \gamma' = (\gamma_0 \times \gamma'_0) * (\gamma \times \eta') * (\zeta \times \gamma') * (\gamma_{2f-1} \times \gamma'_{2f'-1}) * (\gamma_{2f} \times \gamma_{2f'}).
\end{equation}
which is a path in $\mathbb{Z}^3$ starting at $v_0 \times w_0$, ending at $v_m \times w_{m'}$ and satisfying \eqref{eq:proj_gamm_gamm_prim}.
\end{proof}

\subsection{Proof of equation \ref{eq:cro_log_2}}
\label{app:cro_log}
A $*$-path in $\mathbb{Z}^2$ is a sequence $\{v_0, v_1, \ldots, v_r \}$ of sites such that $|v_j - v_{j-1}|_{\infty} =1$ for all $j = 1, \ldots , r$ (where $| \cdot |_{\infty} $ stands for the $l_{\infty}$-distance in $\mathbb{Z}^2$).
Denote by $\mathcal{A}^{*}(n,m;k,l)$ the event that there exists a $*$-path of closed sites crossing $R(n,m;k,l)$ from its left to its right.
Then, by duality,
\begin{equation}
 \label{eq:dual}
\mathcal{B}(n,m;k,l) \text{ occurs  if and only if } \mathcal{A}^{*}(n,m;k,l) \text{ does not occurs}.
\end{equation}

If now $\mathbb{Z}^2_{*}$ stands for the graph with vertex set $\mathbb{Z}^2$ and with an edge between each pair of vertices lying at $l_{\infty}$-distance $1$ from each other and $p_c(\mathbb{Z}_{*}^2)$ the critical density for Bernoulli site percolation on this lattice, then we have that $p_c(\mathbb{Z}^2) + p_c(\mathbb{Z}^2_{*}) = 1$ (see \cite{Russo81} for a proof).
Thus $p>p_c(\mathbb{Z}^2)$ implies that $1-p < p_c(\mathbb{Z}^2_{*})$ so by the analogue of equation \eqref{eq:mens} for Bernoulli site percolation on $\mathbb{Z}^2_{*}$ (see \cite{Menshikov86}), there exists $\psi_{*} = \psi_{*}(p) > 0$ such that:
\begin{equation}
\label{eq:dec_*}
\mathbb{P}_p \left( \left\{ \text{there is a}~ *\text{-path of closed sites from}~ \mathbf{0}~ \text{to}~ \partial{B(n)} \right\} \right) \leq e^{-\psi_{*}(p)n}.
\end{equation}

For any constant $c > 0$ we have
\[
\mathbb{P}_p \left( \mathcal{A}^{*} \left( \lceil c\log n \rceil, n \right) \right) \leq n\;\mathbb{P}_p \left( \left\{ \text{there is a}~ *\text{-path of closed sites from}~ \mathbf{0}~ \text{to}~ \partial{B(\lceil c\log n \rceil)} \right\} \right)
\]

Then using \eqref{eq:dual} and equation \eqref{eq:dec_*} we have that:
\begin{equation}
 \label{eq:cro_log}
\begin{split}
 \mathbb{P}_p \left( \mathcal{B} \left( \lceil c\log n \rceil, n \right) \right) & = 1 - \mathbb{P}_p \left( \mathcal{A}^{*} \left( \lceil c\log n \rceil, n \right) \right) \\
& \geq 1 - ne^{-\psi_{*}(p)c \log{n}} = 1- n^{1- c\psi_{*}(p)}
\end{split}
\end{equation}
Now if we fix $c > \psi_{*}(p)^{-1}$ equation (\ref{eq:cro_log}) yields,
\begin{equation}
\lim_{n \to \infty} \mathbb{P}_p \left( \mathcal{B} \left( \lceil c\log n \rceil, n \right) \right) = 1.
\end{equation}

\section*{Acknowledgements}
The authors thank {N.A.M. Ara\'ujo}, {H.J. Herrmann}, {K.J. Schrenk} {A.-S. Sznitman} and {A. Teixeira} for  discussions,  and {V. Tassion} for suggesting the name "Bernoulli line percolation" for this model.
M.R.H. was partially supported by CNPq grants 140532/2007-2, 248718/2013-4. The research of V.S. was supported in part by Brazilian CNPq grants 308787/2011-0 and 476756/2012-0 and FAPERJ grant E-26/102.878/2012-BBP.
This work was also supported by ERC AG ``COMPASP" and ESF RGLIS grants.

\bibliographystyle{alpha}
\bibliography{tese}

\end{document}